\newsavebox{\@brx}
\newcommand{\llangle}[1][]{\savebox{\@brx}{\(\m@th{#1\langle}\)}%
  \mathopen{\copy\@brx\kern-0.5\wd\@brx\usebox{\@brx}}}
\newcommand{\rrangle}[1][]{\savebox{\@brx}{\(\m@th{#1\rangle}\)}%
  \mathclose{\copy\@brx\kern-0.5\wd\@brx\usebox{\@brx}}}
\begin{document}
%%%%%%%%%%%%%%%%%%%%%%%%%%%%%%%%%%%%%%%%%%%%%%%%%%%%%%%%%%%%%%%%%%%%%%%%
%%%%%%%%%%%%%%%%%%%%%%%%%%     Macros      %%%%%%%%%%%%%%%%%%%%%%%%%%%%%
%%%%%%%%%%%%%%%%%%%%%%%%%%%%%%%%%%%%%%%%%%%%%%%%%%%%%%%%%%%%%%%%%%%%%%%%
\def\e#1\e{\begin{equation}#1\end{equation}}
\def\ea#1\ea{\begin{align}#1\end{align}}
\def\eq#1{{\rm(\ref{#1})}}
\theoremstyle{plain}% default
\newtheorem{thm}{Theorem}%[section]
\newtheorem{lem}{Lemma}
\newtheorem{prop}{Proposition}
\newtheorem{cor}{Corollary}
\newtheorem{rem}{Remark}
\newtheorem{ex}{Example}
\newtheorem{conj}{Conjecture}

\newcommand{\D}{\mathcal{D}}
\newcommand{\A}{\mathcal{A}}
\newcommand{\LL}{\llangle[\big]}
\newcommand{\RR}{\rrangle[\big]}
\newcommand{\LD}{\big\langle}
\newcommand{\RD}{\big\rangle}
\newcommand{\C}{\mathbb{C}}
\newcommand{\F}{\mathcal{F}}
\newcommand{\HH}{\mathcal{H}}
\newcommand{\X}{\mathcal{X}}
\newcommand{\Z}{\mathbb{Z}}
\newcommand{\E}{\mathcal{E}}
\newcommand{\M}{\mathcal{M}}
\newcommand{\W}{\mathcal{W}}
\newcommand{\PP}{\mathbb{P}}
\newcommand{\K}{\mathscr{K}}
\newcommand{\Q}{\mathfrak{Q}}
\newcommand{\q}{\mathbf{q}}
\newcommand{\one}{\mathbf{1}}
\newcommand{\wpsi}{\widetilde{\psi}}
\newcommand{\bpsi}{\bar{\psi}}

\def\sub{\subset} \def\lra{\longrightarrow}

\numberwithin{equation}{section}

\def\red{\color{red}}

\iffalse
%%%
\makeatletter
\newcommand{\subjclass}[2][2010]{%
  \let\@oldtitle\@title%
  \gdef\@title{\@oldtitle\footnotetext{#1 \emph{Mathematics Subject Classification.} #2}}%
}
\newcommand{\keywords}[1]{%
  \let\@@oldtitle\@title%
  \gdef\@title{\@@oldtitle\footnotetext{\emph{Key words and phrases.} #1.}}%
}
\makeatother
%%%
\fi

\newcommand{\comment}[1]{\textcolor{red}{[#1]}} %for displaying red texts
%\newcommand{\comment}[1]{} %for not displaying red texts
%\newcommand{\com}[2]{\textcolor{blue}{[#2]}} %for displaying red texts

%%%%%%%%%%%%%%%%%%%
%%%Li's notation

\def\fl{{\mathrm{fl}}}

\def\umv{^{\mathrm{mv}}}
\def\lophi{_{^{(1,\varphi)}}}

\def\barH{{\bar H}}
\def\lorho{_{^{(1,\rho)}}}

\def\barM{{\bcM}}
\def\barG{{\overline{G}}}
\def\barS{{\overline{S}}}
\def\fF{\mathfrak{F}}

\def\fr{\mathfrak{r}}
\def\TF{T\fF}
\def\lgd{_{g,d}}
\def\fdgp{{\fD_g^p}}
\def\fdg{{\fD_g}}
\def\tfdg{{\widetilde\fD_g}}
\def\fdmp{{\fD^p_\fM}}
\def\fd{{\fD}}
\def\tfd{{\ti\fD}}
\def\fdp{{\fD^p}}
\def\tfdp{{\ti\fD^p}}
\def\fdm{{\fD_\fM}}
\def\tfdo{{\widetilde\fD_1}}
\def\defeq{:=}\def\bB{{\mathbf B}}
\def\bA{{\mathbf A}}
\def\bC{{\mathbf C}}
\def\bXc{{\bX^\circ}}
\def\ff{{\mathfrak f}}
\def\ffsta{\ff\sta}
\def\tell{\text{ell}}
\def\tgst{\text{gst}}
\def\bcM{\overline{M}}

\def\Sbul{\mathrm{Sym}}
\def\be{{\mathbf e}}
\numberwithin{equation}{section}
\def\sP{{\mathscr P}}

\newcommand{\Spec}{\operatorname{Spec}}
\newcommand{\Proj}{\operatorname{Proj}}

\def\lle{_{\tcX_\mu,\mathrm{pr}}}
\def\llr{_{\tcX_\mu,\mathrm{tl}}}
\def\llg{_{\mu,\mathrm{gst}}}
\def\fh{{\mathfrak h}}
\def\bv{{\mathbf v}}
\def\sP{{\mathscr P}}
\def\Moo{\bcM_{1,1}}
\def\sP{{\mathscr P}}
\def\sQ{{\mathscr Q}}

\def\fY{{\mathfrak Y}}
\def\fX{{\mathfrak X}}
\def\bQ{{\mathbf Q}}
\def\lzc{_0^\circ}
\def\lzo{_{o\cap \mu}}
\def\bg{{\mathfrak g}}
\def\oWc{{\overline W\dcirc}}
\def\sY{{\mathscr Y}}
\def\sC{{\mathscr C}}
\def\sO{{\mathscr O}}
\def\sM{{\mathscr M}}
\def\sN{{\mathscr N}}
\def\sL{{\mathscr L}}
\def\sr{{\mathscr r}}
\def\sI{{\mathscr I}}
\def\sO{\mathscr{O}}
\def\sI{\mathscr{I}}
\def\sJ{\mathscr{J}}
\def\sE{\mathscr{E}}
\def\sF{\mathscr{F}}
\def\sV{\mathscr{V}}
\def\sA{\mathscr{A}}
\def\sU{\mathscr{U}}
\def\sR{\mathscr{R}}
\def\sX{\mathscr{X}}
\newcommand{\CC}{\mathbb{C}}
\newcommand{\EE}{\mathbb{E}}
\newcommand{\QQ}{\mathbb{Q}}
\newcommand{\ZZ}{\mathbb{Z}}
\newcommand{\NN}{\mathbb{N}}
\newcommand{\GG}{\mathbb{G}}
\newcommand{\FF}{\mathbb{F}}
\newcommand{\fP}{\mathbb{M}}
\newcommand{\UU}{\mathbb{U}}
\newcommand{\bUU}{\overline{\mathbb{U}}}
\newcommand{\VV}{\mathbb{V}}
\newcommand{\WW}{\mathbb{W}}
\newcommand{\TT}{\mathbb{T}}
\def\bfV{\bar\fV}

\def\shar{^!}
\def\intr{^{int}}
\def\sK{{\mathscr K}}
\def\ufour{^{\oplus 4}}
\def\subs{\subsection{}}

\def\fh{{\mathfrak h}}
\def\fn{{\mathfrak n}}
\def\fa{{\mathfrak a}}
\def\fb{{\mathfrak b}}
\def\fR{{\mathfrak R}}
\def\psta{^{\prime \ast}}
\def\vd{{\vec{d}}}
\def\vm{{\vec{m}}}

\def\fq{{\mathfrak q}}
\def\sP{{\mathscr P}}
\def\lred{_{\text{red}}}
\def\prista{^{\prime\ast}}
\def\llamp{_{\lamp}}
\def\lDel{_{\Delta}}
\def\fn{{\mathfrak n}}
\def\fk{{\mathfrak k}}
\def\HH{\mathbb H}
\def\fK{{\mathfrak K}}
\def\fH{{\mathfrak H}}

\def\la{\big\langle}
\def\ra{\big\rangle}

\def\fQ{{\mathfrak Q}}
\def\fL{{\mathfrak L}}
\def\Total{\mathrm{Total}}
\def\cqg{{\cQ_{g}}}
\def\nq{{N}}
\def\cng{{\cN_{g}}}
\def\cngp{\cN} %{\cN_{g}^p}}
\def\Vb{\mathrm{Vb}}
\def\cpd{\cP}
\def\cpg{{\cP_g}}
\def\cpgp{\cP_g}% \def\cpgp{\cpx}
\def\cvg{{\cV_g}}
\def\cvgp{\cV} %\def\cvgp{{\cV_g^p}}
\def\nn{\mathfrak n}
\def\fS{\mathfrak S}
\def\fy{\mathfrak y}
\def\vdim{\mathrm{vir}.\dim}

\def\cno{{\cN_{1}}}
\def\cnop{\cN} %{\cN_{g}^p}}
\def\Vb{\mathrm{Vb}}
\def\cpd{\cP}
\def\cpxp{\cP_1}% \def\cpgp{\cpx}
\def\cpy{{\cP_\cY}}
\def\cpx{{\cP_\cX}}
\def\cvo{{\cV_1}}
\def\cvop{\cV} %\def\cvgp{{\cV_g^p}}

\def\ured{^{\text{red}}}

\def\boldY{{\mathbf Y}}
\def\boldX{{\mathbf X}}
\def\boldE{{\mathbf E}}
\def\boldF{{\mathbf F}}
\def\fk{{\mathfrak k}}
\def\ufl{^{\text{flat}}}
\def\ellip{\text{ell}}
\def\gst{\text{gst}}
\def\lred{_{\text{red}}}
\def\cA{{\mathcal A}}

\def\fdg{{\fD_g}}
\def\tfdg{{\widetilde\fD_g}}
\def\AA{\mathbb A}
\def\lAo{_{\Ao}}
%\let\lab=\label

%% mathbf

\newcommand{\bL}{\mathbf{L}}
\newcommand{\bT}{\mathbf{T}}
\def\bS{\mathbf{S}}
\newcommand{\bk}{\mathbf{k}}
\newcommand{\bfl}{\mathbf{l}}
\newcommand{\bm}{\mathbf{m}}
\newcommand{\bn}{\mathbf{n}}
\newcommand{\bp}{\mathbf{p}}
\newcommand{\bq}{\mathbf{q}}
\newcommand{\bs}{\mathbf{s}}
\newcommand{\bt}{\mathbf{t}}
\newcommand{\bw}{\mathbf{w}}
\newcommand{\bx}{\mathbf{x}}
\def\bE{{\mathbf E}}
\def\bF{{\mathbf F}}
\def\bG{{\mathbf G}}
\def\bK{{\mathbf K}}
\def\bH{{\mathbf H}}
\def\bR{{\mathbf R}}
\def\tV{{\texttt{V}}}
\newcommand{\kk}{\bk}
\def\bA{\mathbf{A}}
\newcommand{\bP}{\mathbf{P}}

\def\lggd{_{g,\gamma,\bd}}
\def\cWgg{\cW\lggd}

%% mathcal

\newcommand{\cal}{\mathcal}
\def\oR{{\overline{R}}}
\def\cA{{\cal A}}
\def\cB{{\cal B}}
\def\cC{{\cal C}}
\def\cD{{\cal D}}
\def\cE{{\cal E}}
\def\cF{{\cal F}}
\def\cH{{\cal H}}
\def\cK{{\cal K}}
\def\cL{{\cal L}}
\def\cM{{\cal M}}
\def\cN{{\cal N}}
\def\cO{{\cal O}}
\def\cP{{\cal P}}
\def\cQ{{\cal Q}}
\def\cR{{\cal R}}
\def\cT{{\cal T}}
\def\cU{{\cal U}}
\def\cV{{\cal V}}
\def\cW{{\cal W}}
\def\cS{{\cal S}}
\def\cX{{\cal X}}
\def\cY{{\cal Y}}
\def\cZ{{\cal Z}}
\def\cI{{\cal I}}

\def\tint{\text{int}}
\newcommand{\oS}{\mathring{S}}

%% mathfrak

\def\fB{\mathfrak{B}}
\def\bfB{\bar\fB}
\def\fC{\mathfrak{C}}
\def\fD{\mathfrak{D}}
\def\fE{\mathfrak{E}}
\def\fF{\mathfrak{F}}
\def\fP{\mathfrak{M}}
\def\fV{\mathfrak{V}}
\def\fX{\mathfrak{X}}
\def\fy{\mathfrak{Y}}

\def\ff{\mathfrak{f}}
\def\fm{\mathfrak{m}}
\def\fp{\mathfrak{p}}
\def\ft{\mathbf{q}}
\def\fu{\mathfrac{u}}
\def\fv{\mathfrak{v}}
\def\fe{\mathfrak{e}}
\def\fs{\mathfrak{s}}
\def\frev{\mathfrak{rev}}
\def\tcM{\tilde\fM}

%% tilde, Greek

\newcommand{\tPhi }{\tilde{\Phi} }
\newcommand{\tGa}{\tilde{\Gamma}}
\newcommand{\tbeta}{\tilde{\beta}}
\newcommand{\trho }{\tilde{\rho} }
\newcommand{\tpi  }{\tilde{\pi}  }

%% tilde, English

\newcommand{\tC}{\tilde{C}}
\newcommand{\tD}{\tilde{D}}
\newcommand{\tE}{\tilde{E}}
\newcommand{\tF}{\tilde{F}}
\newcommand{\tK}{\tilde{K}}
\newcommand{\tL}{\tilde{L}}
\newcommand{\tT}{\tilde{T}}
\newcommand{\tY}{\tilde{Y}}

\newcommand{\tf}{\tilde{f}}
\newcommand{\tih}{\tilde{h}}
\newcommand{\tz}{\tilde{z}}
\newcommand{\tu}{\tilde{u}}
\newcommand{\tpsi}{\widetilde{\psi}}
\newcommand{\tkappa}{\tilde{\kappa}}

\def\tilcW{{\tilde\cW}}
\def\tilcM{\tilde\cM}
\def\tilcC{{\tilde\cC}}
\def\tilcZ{\tilde\cZ}
\def\tilcX{\tilde\cX}
\def\tilcV{{\tilde\cV}}
\def\tilcW{\tilde\cW}
\def\tilcD{{\tilde\cD}}
\def\tilY{{\tilde Y}}

\def\tilX{{\tilde X}}
\def\tilV{{\tilde V}}
\def\tilW{{\tilde W}}
\def\tilZ{{\tilde Z}}
\def\tilq{{\tilde q}}
\def\tilp{{\tilde q}}

%% hat

\newcommand{\hD}{\widehat{D}}
\newcommand{\hL}{\widehat{L}}
\newcommand{\hT}{\widehat{T}}
\newcommand{\hX}{\widehat{X}}
\newcommand{\hY}{\widehat{Y}}
\newcommand{\hZ}{\widehat{Z}}
\newcommand{\hf}{\widehat{f}}
\newcommand{\hu}{\widehat{u}}
\newcommand{\hDe}{\widehat{Delta}}

%% check

\newcommand{\eGa}{\check{\Ga}}

%% moduli

\newcommand{\Mbar}{\overline{\cM}}
\newcommand{\MX}{\Mbar^\bu_\chi(\Gamma,\vec{d},\vmu)}
\newcommand{\GX}{G^\bu_\chi(\Gamma,\vec{d},\vmu)}
\newcommand{\Mi}{\Mbar^\bu_{\chi^i}(\Po,\nu^i,\mu^i)}
\newcommand{\Mv}{\Mbar^\bu_{\chi^v}(\Po,\nu^v,\mu^v)}
\newcommand{\GYfm}{G^\bu_{\chi,\vmu}(\Gamma)}
\newcommand{\Gdmu}{G^\bu_\chi(\Gamma,\vd,\vmu)}
\newcommand{\Mdmu}{\cM^\bu_\chi(\widehatYrel,\vd,\vmu)}
\newcommand{\tMd}{\tilde{\cM}^\bu_\chi(\widehatYrel,\vd,\vmu)}

\def\mzd{\cM^\bu_{\chi}(Z,d)}
\def\mzod{\cM^\bu_{\chi}(Z^1,d)}
\def\mhzd{\cM^\bu_{\chi}(\widehat Z,d)}
\def\Mwml{\cM^\bu_{\chi,\vd,\vmu}(W\urel,L)}
\def\Mfml{\cM_{\chi,\vd,\vmu}^\bu(\widehatYrel,\hL)}
\def\Mgw{\cM_{g,\vmu}(W\urel,L)}
\def\mxdm{\cM_{\chi,\vd,\vmu}}
\def\Mfmlz{\mxdm(\hY_{\Gamma^0}\urel, \hL)}
\def\Mgz{\cM_{g,\vmu}(\hY_{\Ga^0}\urel,\hL)}
\def\Mmu{\cM^\bu_{\chi,\vmu}(\Gamma)}
\def\mapright#1{\,\smash{\mathop{\lra}\limits^{#1}}\,}
\def\maprightsmall#1{\,\smash{\mathop{\lra}\limits^{{}_{#1}}}\,}
\def\twomapright#1{\,\smash{\mathop{-\!\!\!\lra}\limits^{#1}}\,}
\def\mapdown#1{\ \downarrow\!{#1}}
\def\Myy{\cM_{\chi,\vd,\vmu}^\bu(Y_{\Gamma^0}\urel,L)}
\def\mw{\cM_{\chi,\vd,\vmu}^\bu(W\urel,L)}

\def\my{\cM^\bu(\Gamma,\widehat L)}
\def\myy{\cM^\bu(Y\urel,L)}
\def\mwtdef{\cM^\soe\ldef}
\def\MY{\cM^\bu_\chi(\widehat{\cY},\vd,\vmu)}
\let\cMM=\cM
\let\cNN=\cN

\def\llam{_{\lambda}}
\def\lsi{_{\sigma}}

\def\BM{H^{BM}\lsta}

%% vector

\newcommand{\two  }[1]{ ({#1}_1,{#1}_2) }
\newcommand{\three}[1]{ ({#1}_1,{#1}_2,{#1}_3) }

\def\oL{{\overline{L}}}
%% vec

\newcommand{\pair}{(\vx,\vnu)}
\newcommand{\vmu}{{\vec{\mu}}}
\newcommand{\vnu}{{\vec{\nu}}}
\newcommand{\vx}{\vec{\chi}}
\newcommand{\vz}{\vec{z}}
\newcommand{\vsi}{\vec{\sigma}}
\newcommand{\vn}{\vec{n}}
\newcommand{\Id}{{\rm Id}}
\newcommand{\wt}{\widetilde{t}}
\newcommand{\ws}{\widetilde{s}}
%% superscript

\newcommand{\up}[1]{ {{#1}^1,{#1}^2,{#1}^3} }

\def\dual{^{\vee}}
\def\ucirc{^\circ}
\def\sta{^\ast}
\def\st{^{\mathrm{st}}}
\def\virt{^{\mathrm{vir}}}
\def\upmo{^{-1}}
\def\sta{^{\ast}}
\def\dpri{^{\prime\prime}}
\def\ori{^{\mathrm{o}}}
\def\urel{^{\mathrm{rel}}}
\def\pri{^{\prime}}
\def\virtt{^{\text{vir},\te}}
\def\virts{^{\text{vir},\soe}}
\def\virz{^{\text{vir},T}}
\def\mm{{\mathfrak m}}
\def\sta{^*}
\def\uso{^{S^1}}
\def\oV{\overline{V}}
\def\sB{{\mathscr B}}

%% subscript

\newcommand{\lo}[1]{ {{#1}_1,{#1}_2,{#1}_3} }
\newcommand{\hxn}[1]{ {#1}_{\widehat{\chi},\widehat{\nu}} }
\newcommand{\xmu}[1]{{#1}^\bu_{\chi,\up{\mu}} }
\newcommand{\xnu}[1]{{#1}^\bu_{\chi,\up{\nu}} }
\newcommand{\gmu}[1]{{#1}_{g,\up{\mu}}}
\newcommand{\xmm}[1]{{#1}^\bu_{\chi,\mm}}
\newcommand{\xn}[1]{{#1}_{\vx,\vnu}}

\def\lbe{_{\beta}}
\def\lra{\longrightarrow}
\def\lpri{_{\mathrm{pri}}}
\def\lsta{_{\ast}}
\def\lvec{\overrightarrow}
\def\Oplus{\mathop{\oplus}}
\def\las{_{a\ast}}
\def\ldef{_{\text{def}}}

%% Greek

\newcommand{\Del}{\Delta}
\newcommand{\Si}{\Sigma}
\newcommand{\Ga}{\Gamma}

\newcommand{\ep}{\epsilon}
\newcommand{\lam}{\lambda}
\newcommand{\si}{\sigma}

%% abbreviation

\newcommand{\xnm}{ {-\chi^i \Delta\ell(\nu^i)\Delta \ell(\mu^i)}}
\newcommand{\vnm}{ {-\chi^v \Delta\ell(\nu^v)\Delta \ell(\mu^v)}}
\newcommand{\lmu}{_\mu}
\newcommand{\pa}{\partial}
\newcommand{\bu}{\bullet}
\newcommand{\da}{D^\alpha }
\newcommand{\Da}{\Delta(D^\alpha)}
\newcommand{\dpm}{\Delta^\pm}
\newcommand{\Dp}{\Delta^\Delta}
\newcommand{\Dm}{\Delta^-}
\newcommand{\TY}{u^*\left(\Omega_{Y_\bm}(\log \hD_\bm)\right)^\vee }
\newcommand{\Ym}{Y[m^1,\ldots,m^k]}
\newcommand{\pim}{\pi[m^1,\ldots,m^k]}
\newcommand{\pmu}{p^1_{\mu^1}p^2_{\mu^2}p^3_{\mu^3}}
\newcommand{\TZ}{u^*\left(\Omega_Z(\sum_{\alpha=1}^k
                 \log D^\alpha_{(m^\alpha)})\right)^\vee }
\newcommand{\ee}{{\bar{e}}}
\newcommand{\bfmS}{{\bf mS}}

\def\boldb{{\mathbf b}}

\def\ocZ{{\overline{\cZ}}}
\def\begeq{\begin{equation}}
\def\endeq{\end{equation}}
\def\and{\quad{\rm and}\quad}
\def\bl{\bigl(}
\def\br{\bigr)}
\def\defeq{:=}
\def\mh{\!:\!}
\def\sub{\subset}
\def\Ao{{\mathbb A}^{\!1}}
\def\Aosta{{\mathbb A}^{\!1\ast}}
\def\Zt{\ZZ^{\oplus 2}}
\def\widehatYrel{\widehat Y_\Gamma\urel}
\def\Pt{{\mathbb P}^2}
\def\Po{{\mathbb P^1}}
\def\and{\quad\text{and}\quad}
\def\mapleft#1{\,\smash{\mathop{\longleftarrow}\limits^{#1}}\,}
\def\lmapright#1{\,\smash{\mathop{-\!\!\!\lra}\limits^{#1}}\,}
\def\llra{\mathop{-\!\!\!\lra}}
\def\lt{{\lambda\lzo}_T^\vee}
\def\ltp{{\lambda\lzo}^\vee_{T,\mathrm{pri} }}
\def\lalp{_\alpha}
\let\mwnew=\mw
\def\pkt{\phi_{k,t}}
\def\wm{W[\bm]}
\def\dm{D[\bm]}
\def\so{{S^1}}
\def\soe{{T_\eta}}
\def\ob{\text{ob}}
\def\te{T}
\def\tz{T}
\def\tet{{T_\eta}}
\def\pkci{\phi_{k_i,c_i}}
\def\bbl{\Bigl(}
\def\bbr{\Bigr)}
\def\reg{_{\text{reg}}}

\def\llra{\,\mathop{-\!\!\!\lra}\,}

\def\dbar{\overline{\partial}}
\def\bone{{\mathbf 1}}
\def\mgn{\cM_{g,n}}
\def\mxnxd{\cM^\bullet_{\chi,d,n}(\cX/T)}
\def\mxnxno{\cM^\bullet_{\chi_1,\nu_1,n_1}(X_1/E_1)}
\def\mxnxn{\cM^\bullet_{\chi,\nu,n}(X/E)}
\def\mxnd{\cM^\bullet_{\chi,d,n}}
\def\mxn{\cM_{\chi,n}}

\def\Ob{\cO b}
\def\An{{{\mathbb A}^n}}
\def\kzz{\kk[\![z]\!]}

\def\lep{_{\ep}}
\def\lloc{_{\mathrm{loc}}}
\def\uep{^\ep}
\def\loc{{\mathrm{loc}}}
\def\lpbar{_{\bar p}}
\def\mcn{\cM_{\chi,n}}

\def\lalpbe{_{\alpha\beta}}
\def\bul{^\bullet}

\def\ev{\mathrm{ev}}
\def\uso{^{S^1}}

\def\aa{\alpha}
\def\tcX{\tilde{\cX}}
\def\tcY{\tilde{\cY}}
\def\tcD{\tilde{\cD}}

\def\bnu{\bar{\nu}}
\def\bth{\bar{\theta}}
\def\bp{\bar{p}}

\def\mxdb{\cM_{\cX,d}^\bullet}
\def\myib{\cM_{Y_i,\eta_i}^\bullet}
\def\myb{\cM_{Y,\eta}^\bullet}
\def\myy{\cM_{Y_1\cup Y_2,\eta}\bul}
\def\bsig{{\bar{\sigma}}}

\def\Pn{{\mathbb P}^n}
\def\MPdd{\cM_2(\Pn,d)}
\def\MPd{\cM_1(\Pn,d)}
\def\MPod{\cM_{0,1}(\Pn,d)}
\def\Ng{\cN_\eta}
\def\sta{^\ast}
\def\image{\text{Im}\,}
\def\lineN{\overline{\cN}}
\let\lab=\label
\def\lone{_{o[a]}}

\def\lineM{\overline{\cM}}
\def\etao{{\eta_1}}
\def\etat{{\eta_2}}
\def\rhat{\widehat{}}

\def\tilV{{\tilde V}}
\def\tilE{{\tilde E}}
\def\tilS{{\tilde S}}

\def\llam{_{\lambda}}
\def\lpr{_{\text{pr}}}

\def\oev{{\overline{ev}}}

\def\bQ{{\mathbf Q}}
\def\lzc{_0^\circ}
\def\lpo{_1^\circ}

\def\ocY{{\overline{\cY}}}
\def\sO{{\mathscr O}}
\def\sW{{\mathscr W}}
\def\sG{{\mathscr G}}
\def\sH{{\mathscr H}}
\def\sR{{\mathscr R}}
\def\sD{{\mathscr D}}

\def\beq{\begin{equation}}
\def\eeq{\end{equation}}
\def\backl{{/}}
\def\ppri{^{\prime\prime}}
\def\uplusn{^{\oplus n}}
\def\vsp{\vskip5pt}
\def\Pf{{\PP^4}}

\def\shar{^!}
\def\intr{^{int}}
\def\bM{\mathbf{M}}

\def\bbP{\overline{\bP}_{g,d}}
\def\bbM{\overline{\fM}}
\def\bbPef{\bbP^{{}{\text{ef}}}}
\def\bbPo{\overline{\bP}_{1,d}^{\text{ef}}}
\def\bD{{\mathbf D}}
\def\bcMo{\overline{\cM}_1(\Pf,d)}
\def\bcMQo{\overline{\cM}_1(Q,d)}
\def\cMo{{\overline\cM}_1(\Pf,d)}
\def\cMQo{{\overline\cM}_1(Q,d)}
\def\fP{{\mathfrak P}}
\def\fM{{\mathfrak M}}
\def\uf{^{\oplus 4}}
\def\ut{^{\oplus 3}}

\def\fU{{\mathfrak U}}
\def\lod{_{1,(d)}}
\def\tilpsi{{\tilde\psi}}
\def\fZ{{\mathfrak Z}}
\def\of{^{\otimes 5}}
\def\bee{\begin{equation}}
\def\eeq{\end{equation}}

\def\bbMup{\bbM\lod^{bp}}

\def\fA{{\mathfrak A}}
\def\bV{{\mathbf V}}
\def\fW{{\mathfrak W}}
\def\uo{^\circ}
\def\ufive{^{\oplus 5}}

\def\bMgw{\overline{\cM}_{g,(d)}}
\def\tilM{{\tilde M}}
\def\uwt{^{{\textnormal d}}}

\def\fN{{\mathfrak N}}
\def\boldr{{\mathbf r}}
\def\bd{{\mathbf d}}
\def\lred{_{\text{red}}}

\def\tfm{{\tilde\fM\uwt}}
\def\ufive{^{\oplus 5}}
\def\ofour{^{\otimes 4}}
\def\ofive{^{\otimes 5}}
\def\eset{\emptyset}
\let\eps=\epsilon
\let\veps=\varepsilon
\def\tilPhi{{\Phi}}
\def\ust{^{\textnormal{st}}}
\def\ti{\tilde}
\def\tU{{\ti U}}
\def\Lam{{{\lambda\lzo}}}
\def\tS{{\ti S}}
\def\tT{{\ti T}}
\def\fg{{\mathfrak g}}

\def\lamp{{\lam'}}
\def\ufdpo{^{\oplus 5d\Delta1}}

\def\OB{\mathbf{Ob}}
\def\can{\cong_{\textnormal{can}}}
\def\Inc{{\sub}_1^4}
\def\Lfs{q\sta p\lsta \sL\ofive(\cS)}
\def\txi{{\ti\xi}}
\def\lD{_{\Delta}}
\def\lgst{_{\rm gst}}
\def\lell{_{\rm pri}}
\def\lzo{_{\rm int}}
\def\gst{{\rm{gst}}}
\def\el{\text{ell}}
\def\tiY{{\ti Y}}
\def\lcan{_{{\mathrm{can}}}}
\def\Gm{{G_{\mathrm m}}}

\def\cfM{{\cM^{\mathrm w}}}
\let\cfD=\cD
\def\tfd{{\tilde \cD}}
\def\cfN{{{\tilde\cM}^{\mathrm w}}}
\def\cpx{{\tilde \cY}}
\def\ticfM{\ti\cM^{\text{w}}}
\def\tcX{{\ti\cX}}
\def\tcy{{\ti\cY}}
\def\bcpri{\bC_{\mathrm{pri}}}
\def\bcgst{\bC\lgst}
\def\tcygst{{\tcy\lgst}}
\def\tfg{{T\fF_{g,d}}}
\def\tfo{{T\fF_{1,d}}}
\def\tf0{{T\fF_{0,d}}}
\def\P5{{\PP^5}}
\def\lab#1{\label{#1}[{#1}]\  }
\let\lab=\label
\def\mr{{\mathring}}
\def\upm{^{{\mathrm m}}}
\def\dzi{\partial_{z_i}}
\def\dzj{\partial_{z_j}}
\def\dwj{\partial_{w_j}}
\def\ufix{^{{\mathbf f}}}
\def\umove{^{{\mathbf m}}}
\def\sZ{\mathscr Z}
 \def\veck{{\vec k}}
\def\ga{{\Gamma}}
\def\romann{{\mathrm{n}}}
\def\romanv{{\mathrm{v}}}
\def\top{{\mathrm{top}}}
\def\lrest{_{\mathrm{rest}}}
\def\barcM{\overline{\cM}}
\def\cJ{\mathcal J}

\newcommand{\tZ}{\widetilde{Z}}

\renewcommand{\arraystretch}{1.5}

%%%%%%%%%%%%%%%%%%%%%%%%%%%%%%%%%%%%%%%%%%%%%%%%%%%%%%%%%%%%%%%%%%%%%%%%
%%%%%%%%%%%%%%%%%%%%%%%    Text of paper    %%%%%%%%%%%%%%%%%%%%%%%%%%%%
%%%%%%%%%%%%%%%%%%%%%%%%%%%%%%%%%%%%%%%%%%%%%%%%%%%%%%%%%%%%%%%%%%%%%%%%
%The following title can be changed according to needs.%
\title{\bf Higher Genus FJRW Invariants of a Fermat Cubic}
\author{Jun Li, Yefeng Shen and Jie Zhou}
\date{}
\subjclass[2010]{14N35, 11Fxx}
\maketitle

\begin{abstract}

%%%
\iffalse
We use the Belorousski-Pandharipande relation to derive a Chazy equation for genus-one invariants.
Then  using tautological relations and axioms of Cohomological Field Theories, we reconstruct all-genus Fan-Jarvis-Ruan-Witten invariants of the Fermat cubic polynomial from a single genus-one Fan-Jarvis-Ruan-Witten
 invariant
which we compute by localization of Mixed-Spin-P fields.
%%%
\fi

We  reconstruct all-genus Fan-Jarvis-Ruan-Witten invariants of a Fermat cubic Landau-Ginzburg space 
$(x_1^3+x_2^3+x_3^3: [\mathbb{C}^3/ \mu_3]\to\mathbb{C})$ from genus-one primary  invariants, using tautological relations and axioms of Cohomological Field Theories.
 The genus-one primary invariants  satisfy a Chazy equation by the Belorousski-Pandharipande relation.
 They are completely determined by a single genus-one invariant, which can be obtained from cosection localization and intersection theory on moduli of three spin curves.
% We remark that the underlying Frobenius manifold is not generically semisimple.

We solve an all-genus {\em Landau-Ginzburg/Calabi-Yau Correspondence Conjecture} for the Fermat cubic Landau-Ginzburg space using Cayley transformation on quasi-modular forms.
This transformation relates two non-semisimple CohFT theories: 
the Fan-Jarvis-Ruan-Witten theory of the Fermat cubic polynomial 
and the Gromov-Witten theory of the Fermat cubic curve.  
As a consequence, Fan-Jarvis-Ruan-Witten invariants at any genus can be computed using Gromov-Witten invariants of the elliptic curve.
They also satisfy nice structures including holomorphic anomaly equations and Virasoro constraints.

%As a consequence, the Fan-Jarvis-Ruan-Witten invariants at any genus can be explicitly computed basing on the results on the Gromov-Witten invariants of the elliptic curve.

%Structures such as holomorphic anomaly equations in the Gromov-Witten theory are also transformed into ones for the corresponding Fan-Jarvis-Ruan-Witten theory.

%We use Cayley transformations on quasi-modular forms to realize an all-genus Landau-Ginzburg/Calabi-Yau correspondence between the Gromov-Witten theory of elliptic curves given by the vanishing of Fermat polynomials and the Fan-Jarvis-Ruan-Witten invariants of the corresponding polynomials.
%A key tool is Belorousski-Pandharipande's relation. This relation tells that in both theories the genus-one primary one-point correlation function, which serves as a building block of all-genus correlation functions, satisfies the same ordinary differential equation--the Chazy equation.
%We then apply the correspondence to calculate some higher-genus Fan-Jarvis-Ruan-Witten invariants of the Fermat polynomials.
\end{abstract}

{
\hypersetup{linkcolor=black}
\setcounter{tocdepth}{2} \tableofcontents
}

\section{Introduction}
\label{secintro}

Let $(d; \delta)$ be a {\em weight system} such that $\delta=(\delta_1,\cdots,\delta_N)\in\mathbb{Z}_+^N$ is a primitive $N$-tuple with $w_i:={d/\delta_i}\in \mathbb{Z}_+$.
We say the system is of \emph{Calabi-Yau (CY) type} if
\begin{equation}\label{eqnCYcondition}
d=\delta_1+\cdots+\delta_N\,,
\quad
i.e.\,,
\quad
\sum_{i=1}^{N} {1\over w_{i}}=1\,.
\end{equation}
The \emph{dimension} of the CY type weight system $(d; \delta)$ is defined to be  \begin{equation*} \widehat{c}=\sum_{i=1}^N\left(1-{2\delta_i\over d}\right)=N-2\,. \end{equation*}
Let $\mu_d$ be the multiplicative group consisting of $d$-th roots of unity and
\begin{equation*}
J_\delta=(\zeta_d^{\delta_1}, \cdots, \zeta_d^{\delta_N})%=(e^{2\pi\sqrt{-1}/a},e^{2\pi\sqrt{-1}/b},e^{2\pi\sqrt{-1}/c})
\in \mu_d,  \quad \zeta_d:=\exp(2\pi\sqrt{-1}/ d)\,.
\end{equation*}

We call the data $([\mathbb{C}^N/\langle J_\delta\rangle], W)$ a Landau-Ginzburg (LG) space, where
 $W$ is a non-degenerate quasi-homogeneous polynomial on $\mathbb{C}^N$ satisfying 
\begin{equation*}
W(\lambda^{\delta_1}x_1,\cdots,\lambda^{\delta_N}x_N )=\lambda^d W(x_1,\cdots,x_N), \quad \forall \lambda\in \C^*\,.
\end{equation*}
The polynomial $W$ is assumed to
have only an isolated critical point at the origin and not involve quadratic terms $x_ix_j$, $i\neq j$.
In general, we can consider  Landau-Ginzburg spaces $([\mathbb{C}^N/G], W)$ for a group $G$ which is a subgroup of the group of diagonal symmetries with $J_\delta\in G$ (see \cite{Fan:2013, CLL}).
Two enumerative theories can be associated to such a LG space:
\begin{itemize}
\item The Gromov-Witten (GW) theory of the $G/\langle J_\delta\rangle$-quotient of the hypersurface defined by the vanishing of $W$ in the corresponding weighted projective space $\mathbb{P}^{N-1}(\delta_1,\cdots, \delta_N)$. The quotient space is a CY $(N-2)$-orbifold by the CY condition in \eqref{eqnCYcondition}.

\item The Fan-Jarvis-Ruan-Witten (FJRW) theory of the pair $(W,G)$ as introduced in \cite{Fan:book, Fan:2013}.
\end{itemize}

Both the GW theory and the FJRW theory associated to a CY type weight system are Cohomological Field Theories (CohFT, for short) in the sense
of \cite{Kontsevich:1994}.  \\

%\subsection{GW/FJRW invariants for one-dimensional CY weight systems}
In this work we shall focus on the theories arising from one-dimensional CY type weight systems.
These systems 
are classified by
\begin{equation}
\label{weight-one}
(d;\delta)=(3;1,1,1),\ (4;1,1,2), \ (6;1,2,3)\,.
\end{equation}
The LG space we consider are $([\mathbb{C}^3/\langle J_\delta\rangle], W)$, with $W$ the
Fermat polynomials
\begin{equation}
\label{fermat-elliptic}
W=x_1^{d/\delta_1}+x_2^{d/\delta_2}+x_3^{d/\delta_3}\,.
\end{equation}

On the CY-side,  %the $\langle J_\delta\rangle$-quotient of 
the hypersurface $W=0$ in the weighted projective space $\mathbb{P}^{2}(\delta_1,\delta_2,\delta_3)$ is an elliptic curve, denoted by 
$\E_{d}$ or $\E$ (when the degree $d$ is implicit or unimportant in the discussion) for simplicity. We focus on the GW theory of $\E$.
The GW state space is then defined to be $\mathscr{H}_{\E}:=H^*(\E, \mathbb{C})$.
Let $\overline{\M}_{g,n}(\E, \beta)$ be the moduli stack of degree-$\beta$ stable maps from a connected genus $g$ curve with
$n$ markings to the target $\E$.
Let ${\rm ev}_k,k=1,2,\cdots, n$ be the evaluation morphisms, $\pi$ be the forgetful morphism, and
$[\overline{\M}_{g,n}(\E, \beta)]^{\rm vir}$ be the virtual fundamental cycle of $\overline{\M}_{g,n}(\E, \beta)$.
 The  \emph{ancestor GW invariants} are  given by
 % intersection numbers over $\overline{\M}_{g,n}(\E_d, \beta)$
\begin{equation*}
\LD\alpha_1\psi_1^{\ell_1},\cdots,\alpha_n\psi_n^{\ell_n}\RD^{\E}_{g,n,\beta}
=\int_{[\overline{\M}_{g,n}(\E,\beta)]^{\rm vir}}\prod_{k=1}^n{\rm ev}_k^*(\alpha_k)\pi^*\psi_k^{\ell_k}\,.
\end{equation*}
The \emph{ancestor GW correlation function} is the formal $q$-series
 \begin{equation}
 \label{GW-function}
 \LL\alpha_1\psi_1^{\ell_1},\cdots,\alpha_n\psi_n^{\ell_n}\RR_{g,n}^{\E}(q)
 =\sum_{d\geq0}q^\beta \LD\alpha_1\psi_1^{\ell_1},\cdots,\alpha_n\psi_n^{\ell_n}\RD_{g,n,\beta}^{\E}\,.
 \end{equation}
%The formula below will be useful all through this section.
By the virtual degree counting of $[\overline{\M}_{g,n}(\E, \beta)]^{\rm vir}$, if the series
$$\LL\alpha_1\psi_1^{\ell_1},\cdots,\alpha_n\psi_n^{\ell_n}\RR_{g,n}^{\E}(q)$$ in \eqref{GW-function} is nontrivial, then
 \begin{equation}
 \label{gw-degree}
 \sum_{k=1}^{n}\left({\deg\alpha_k\over 2}+\ell_k\right)=(3-\dim_\C \E)(g-1)+n=2g-2+n\,.
 \end{equation}

%The genus one function $\LL\omega\RR^{\E_d}_{1,1}(q)$ will be of special interest later.

On the LG-side, we consider the FJRW theory of the pair $(W, \langle J_\delta\rangle)$ as originally constructed in  \cite{Fan:book, Fan:2013}.
The main ingredients consist of a CohFT % Cohomological Field Theory \cite{Kontsevich:1994} 
$$\left(\mathscr{H}_{(W, \langle J_\delta \rangle)}, \langle , \rangle, \one, \Lambda^{(W,  \langle J_\delta\rangle )}\right)$$ and FJRW invariants (see Section \ref{cohft-def} for details)
$$\LD\alpha_1\psi_1^{\ell_1},\cdots,\alpha_n\psi_n^{\ell_n}\RD_{g,n}^{(W, \langle J_\delta \rangle)},$$  with $\alpha_i$ elements in the vector space $\mathscr{H}_{(W,\langle J_\delta)\rangle}$.
The space $\mathscr{H}_{(W,\langle J_\delta\rangle)}$ contains a canonical degree-$2$ element, denoted by $\phi$ below.
We 
assemble the FJRW invariants into an \emph{ancestor FJRW correlation function} (as a formal series in $s$)
 \begin{equation}
 \label{FJRW-function}
 \LL\alpha_1\psi_1^{\ell_1},\cdots,\alpha_n\psi_n^{\ell_n}\RR_{g,n}^{(W, \langle J_\delta\rangle)}(s)
 :=\sum_{m=0}^{\infty}{1\over m!} \LD \alpha_1\psi_1^{\ell_1},\cdots,\alpha_n\psi_n^{\ell_n}, \underbrace{s\phi, \cdots, s\phi}_{m}\RD_{g,n+m}^{(W,\langle J_\delta\rangle )}.
 \end{equation}

\subsection{LG/CY correspondence via  modularity}

%A surprising connection between these two CohFTs was  
%proposed in \cite{Witten:1993} and later mathematically formulated and generalized 
%as the {\em Landau-Ginzburg/Calabi-Yau Correspondence Conjecture} \cite[Conjecture 1.0.8]{Fan:2013}.
One of the motivation in constructing the FJRW invariants \cite{Fan:book, Fan:2013}
is to understand mathematically the so-called {\em Landau-Ginzburg/Calabi-Yau correspondence} proposed by physicists \cite{Vafa:1989, Greene:1989, Martinec:1990, Witten:1993}.
The \emph{Landau-Ginzburg/Calabi-Yau Correspondence Conjecture} \cite{Fan:2013, Chiodo:2011-b, Ruan:2012} predicts that
for a CY type weight system the corresponding GW theory and the FJRW theory
are related.
\iffalse 
One mathematical way to phrase the so-called LG/CY correspondence introduced  in \cite{Witten:1993}
is to study the relation between these two CohFTs. Tremendous
work have been done in the past decade to solve the famous
Landau-Ginzburg/Calabi-Yau
correspondence Conjecture
\fi
In the past decade, a lot of effort has been made to formulate and solve this conjecture:
\begin{itemize}
\item
An LG/CY correspondence between the vector spaces is solved in \cite{Chiodo:2011}.
\item
Genus-zero LG/CY correspondence for various pairs $(W, G)$ have been studied using Givental's I-functions, see \cite{Chiodo:2010,  Priddis:2013, Chiodo:2014, Priddis:2014, Clader:2017, Basalaev:2016}.
\item 
For the quintic 3-fold, the correspondence has been pushed to genus one \cite{Guo:2019}.
\item 
For higher genus, the only known examples in the work \cite{Krawitz:2011, Milanov:2011, Milanov:2016, Shen:2016, Iritani:2016} are all generically semisimple and therefore the correspondence at higher genus is a consequence of the genus-zero correspondence, based on Givental-Teleman's classification of semisimple CohFTs \cite{Giv01a, Teleman:2012}.
\end{itemize}

One of the main results of the present work is to solve this conjecture for the Fermat cubic pair $(W=x_1^3+x_2^3+x_3^3, J_\delta)$ at all genus, using the properties of moduli spaces and quasi-modular forms.
% for the Fermat cubic Landau-Ginzburg model $(x^3+y^3+z^3: [\mathbb{C}^3/ \mu_3]\to\mathbb{C})$. %One of the main results of this paper is an all-genus LG/CY correspondence for the LG spaces  $([\mathbb{C}^3/\langle J_\delta\rangle], W)$ given by the Fermat polynomials $W$ in \eqref{fermat-elliptic}.
We remark that the GW CohFT and the FJRW CohFT for such a pair are not generically semisimple and therefore this case is beyond the scope of Givental-Teleman's results.

\subsubsection{Quasi-modular forms and Chazy equation}
\label{secintrochazy}
Specializing to the cases of one-dimensional CY type weight systems,
it is known \cite{Bloch:2000, Okounkov:2006} that the GW correlation functions
for an elliptic curve are quasi-modular forms \cite{Kaneko}.
The key of this work is to relate the generating series in \eqref{GW-function} and \eqref{FJRW-function} using transformations on quasi-modular forms.

Consider
the Eisenstein series
\begin{equation}
E_{2k}(\tau):={1\over 2\zeta(2k)}\sum_{\substack{c, d\in\mathbb{Z}\\ (c,d)=1}}{1\over (c\tau+d)^{2k}}\,, \quad \tau\in\mathbb{H}\,,
\end{equation}
where $\zeta(2k)$ are the zeta-values.
These are holomorphic functions on the upper-half plane $\mathbb{H}$, of which
$E_{2k}$, $k\geq 2$, are modular under the group
$\Gamma:={\rm SL}(2,\mathbb{Z})/\{\pm1\}$; while $E_{2}$ is
 \emph{quasi-modular} \cite{Kaneko}.
To be more precise, $E_{2}$
 is not modular, but its non-holomorphic modification $\widehat{E}_2(\tau, \bar\tau)$ is modular where
\begin{equation*}
\widehat{E}_2(\tau, \bar\tau):=E_2(\tau)-{3\over \pi\, {\rm Im}(\tau)}\,.
\end{equation*}
The set of quasi-modular forms (we regard modular forms as special cases of quasi-modular forms) for
$\Gamma$ form a ring  \cite{Kaneko}.
\begin{equation}
\widetilde{M}_*(\Gamma):=\mathbb{C}[E_2(\tau), E_4(\tau), E_6(\tau)]\,.
\end{equation}
The set of almost-holomorphic modular forms as introduced \cite{Kaneko}
also gives rise to a ring that is isomorphic to $\widetilde{M}_*(\Gamma)$
\begin{equation}
\widehat{M}_*(\Gamma):=\mathbb{C}[\widehat{E}_2(\tau,\bar{\tau}), E_4(\tau), E_6(\tau)]\,.
\end{equation}

Let $q=\exp (2\pi\sqrt{-1}\tau)$. The GW invariants of elliptic curves are \cite{Okounkov:2006} Fourier
coefficients expanded around the infinity cusp $\tau=\sqrt{-1}\infty$ of certain quasi-modular forms. 
For example\footnote{We
are sometimes sloppy about the argument for a quasi-modular form when no confusion should arise. For instance we shall occasionally write $E_k(q)$ for  $E_k(\tau)$.},
let $\omega\in H^2(\E)$ be the Poincar\'e dual of the point class, then
\begin{equation}
\label{genus-one-elliptic}
-24\LL\omega\RR_{1,1}^{\E}(q)= E_2(q)=1-{1\over 24}\sum_{n=1}^{\infty}n{q^n\over 1-q^n}\,.
\end{equation}
For any $f\in \widetilde{M}_*(\Gamma)$, we define
\begin{equation*}
f'(\tau):={1\over 2\pi\sqrt{-1}}\cdot {df\over d\tau}\,.
\end{equation*}
The Eisenstein series $E_2$, $E_4$, and $E_6$ satisfy the so-called Ramanujan identities
%(a system of ODE)
\begin{equation}
\label{ramanujan}
E_2'={E_2^2-E_4\over 12}\,, \quad E_4'={E_2E_4-E_6\over 3}\,, \quad E_6'={E_2E_6-E_4^2\over 2}\,.
\end{equation}
Eliminating $E_{4},E_{4}$, we see that
$E_2$ is a solution to %a third-order differential equation
 the so-called Chazy equation,
\begin{equation}
\label{chazy}
2f'''-2f\cdot f''+3(f')^2=0\,.
\end{equation}

% We shall use tautological relations to obtain relations among GW/FJRW invariants.
Our key observation is that the Chazy equation \eqref{chazy} appears in both GW/FJRW theory for one-dimensional CY weight systems, thanks to the Belorousski-Pandharipande relation discovered in \cite{Belorousski:2000}.
\begin{prop}
\label{main-lemma1}
Consider the LG space $([\mathbb{C}^3/\langle J_\delta\rangle], W)$ given by \eqref{weight-one} and \eqref{fermat-elliptic}.
Then both  the genus-one GW correlation function $-24\LL\omega\RR_{1,1}^{\E}(q)$  and  the genus-one FJRW correlation function
$-24\LL\phi\RR^{(W, \langle J_\delta\rangle)}_{1,1}(s)$ are solutions to the Chazy equation \eqref{chazy}.
\end{prop}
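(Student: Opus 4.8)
The Gromov--Witten half of the statement is immediate from the data already recorded above. By \eqref{genus-one-elliptic} we have $-24\LL\omega\RR^\E_{1,1}(q)=E_2(q)$, and eliminating $E_4$ and $E_6$ from the Ramanujan identities \eqref{ramanujan} exhibits $E_2$ as a solution of \eqref{chazy}; so on the CY side there is nothing to prove beyond quoting these facts. The real content is the FJRW assertion, and the natural strategy is to produce a uniform derivation that applies verbatim to both CohFTs and explains why \eqref{chazy} is forced. That uniform input is the Belorousski--Pandharipande relation \cite{Belorousski:2000}, a codimension-two tautological relation in $H^\ast(\overline{\M}_{2,3})$, which, being a relation in the cohomology of the moduli of curves, may be fed into any CohFT.

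The plan is to cap this relation with three copies of the distinguished degree-two state, namely $\omega$ for the GW CohFT of $\E$ and $\phi$ for the FJRW CohFT of $(W,\langle J_\delta\rangle)$, and to integrate, turning the relation into a universal identity among correlation functions. The splitting axiom of \cite{Kontsevich:1994} rewrites each boundary stratum of $\overline{\M}_{2,3}$ as a sum over a basis of the state space, weighted by the pairing $\langle\,,\,\rangle$, of products of the correlators of the components. Two families of strata drive the computation: a genus-two curve that separates into two genus-one components contributes products of two genus-one correlators, while a genus-two curve acquiring a non-separating node contributes a single genus-one correlator carrying two auxiliary insertions contracted through the pairing. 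Interior $\psi$- and $\kappa$-decorated terms are reduced by the string, dilaton and divisor equations. The decisive structural feature is the grading/selection rule --- the virtual-dimension constraint \eqref{gw-degree} on the GW side and its FJRW analogue --- which annihilates the genus-two smooth contributions and all products except those built from the genus-one one-, two-, three- and four-point functions of the degree-two class. This is precisely what makes the genus-two relation collapse to a closed equation in genus one.

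The final step is to recognise the surviving combination as Chazy. Differentiation is insertion of the degree-two class: on the FJRW side $\tfrac{d}{ds}$ adjoins one factor $\phi$ by the very definition \eqref{FJRW-function}, while on the GW side $q\tfrac{d}{dq}=\tfrac{1}{2\pi\sqrt{-1}}\tfrac{d}{d\tau}$ adjoins one factor $\omega$ by the divisor equation. Writing $f$ for $-24$ times the genus-one one-point function, so that $f',f'',f'''$ are $-24$ times the two-, three- and four-point functions, the non-separating node produces the linear term $2f'''$ while the two separating configurations produce the quadratic terms $-2f f''$ and $3(f')^2$, once the node is resolved against the pairing and the residual unit insertions are removed by the string equation; matching the coefficients delivered by \cite{Belorousski:2000} then yields \eqref{chazy} on the nose. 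I expect the principal obstacle to be the bookkeeping of this middle step: one must carry the exact coefficient of every stratum through the splitting axiom, keep careful track of the genus-zero three-point structure constants and of the self-pairing summed over the non-separating node, and verify that every term not of Chazy shape is killed by the selection rule --- in particular that no genuine genus-two correlator survives. A secondary point to check is that the ancestor (rather than descendant) nature of the correlators in \eqref{GW-function} and \eqref{FJRW-function} does not disturb the identification of $q\partial_q$, respectively $\partial_s$, with insertion of the degree-two class.
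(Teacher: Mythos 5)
Your strategy coincides with the paper's: cap the Belorousski--Pandharipande relation with three copies of the degree-two class, integrate, kill strata by vanishing results, and read off the surviving terms as the Chazy combination. Your shortcut on the GW half (quoting \eqref{genus-one-elliptic} and eliminating $E_4,E_6$ from \eqref{ramanujan}) is legitimate as a standalone argument, but note the paper deliberately runs the BP computation on \emph{both} sides: it later uses Proposition \ref{main-lemma1} plus two initial values to give a new proof of \eqref{genus-one-elliptic} in Theorem \ref{mainthm2}(i), and your route would make that downstream argument circular.

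The genuine gap is in your vanishing analysis: you attribute all cancellations to the dimension/selection rule, but two of the required vanishings are not of that kind, and one of them directly threatens the coefficient matching. First, strata whose genus-zero vertex carries four or more markings (including the $s\phi$ insertions hidden in the $s$-expansion) are killed by Proposition \ref{genus-zero-vanishing} --- the vanishing of all genus-zero quantum corrections --- whose proof needs the $\mathbb{Z}_2$-grading sign argument, not a dimension count; e.g. $\LL\mathfrak{b}_1,\mathfrak{b}_1,\mathfrak{b}_2,\mathfrak{b}_2\RR_{0,4}^{W_d}$ passes the constraint \eqref{lg-degree}. Second, and more seriously, the first stratum of the fourth row of Figure \ref{B&P-relation} (genus-one four-point vertex attached to a genus-zero vertex carrying a loop) passes every selection rule and is \emph{proportional to} $g'''$, so it is precisely "of Chazy shape"; its contribution is $-\tfrac{1}{60}$ times the supertrace $\sum_{\alpha,\alpha'}\eta(\alpha,\alpha')\,\eta^{\alpha',\alpha}$ coming from the loop, which vanishes only because the state space has two even and two odd classes ($\eta^{\phi,\one}=\eta^{\mathfrak{b}_1,\mathfrak{b}_2}=1$, $\eta^{\mathfrak{b}_2,\mathfrak{b}_1}=-1$). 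If this term is overlooked or assumed killed by the selection rule, the coefficient of $f'''$ comes out as $2(1-\mu/12)$ instead of $2$ --- exactly the deformation that occurs for elliptic orbifold curves in the paper's remark --- and the resulting ODE is not the Chazy equation. So "matching the coefficients delivered by Belorousski--Pandharipande" requires this supertrace cancellation, which your plan as written does not supply.
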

Here for a function $f(q)$ in $q$, we use the convention $f'(q)=q\partial_q f$;  for a function $f(s)$ in $s$, $f'(s)=\partial_s f$.%\\

Furthermore, using more tautological relations discovered in \cite{Ionel: 2002, Faber: 2005}, we can show that both the GW and FJRW correlation functions
in \eqref{GW-function} and \eqref{FJRW-function} are determined by the genus-one correlation functions in Proposition \ref{main-lemma1}.
 \begin{prop}\label{main-lemma2}
Consider the LG space $([\mathbb{C}^3/\langle J_\delta\rangle], W)$ given by \eqref{weight-one} and \eqref{fermat-elliptic}.
Let
\begin{equation*}
f=-24\LL\omega\RR_{1,1}^{\E}\quad \text{or}\quad = -24\LL\phi\RR^{(W, \langle J_\delta\rangle)}_{1,1}\,.
\end{equation*}
then the GW correlation functions  in \eqref{GW-function} (or  the FJRW correlation functions  in \eqref{FJRW-function}) are
determined from $f$ by tautological relations and are
elements in the ring $\mathbb{C}[f, f', f'']$.
\end{prop}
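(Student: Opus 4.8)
The plan is to reduce the statement to two facts: that $\mathbb{C}[f,f',f'']$ is closed under a natural derivation $D$, and that every correlation function is a \emph{differential} polynomial in $f$. For the first, note that the Chazy equation \eqref{chazy} of Proposition \ref{main-lemma1} is exactly what makes $\mathbb{C}[f,f',f'']$ a differential ring: writing $D=q\partial_q$ in the GW case and $D=\partial_s$ in the FJRW case, one has $Df=f'$, $Df'=f''$, and $Df''=f'''=ff''-\tfrac32(f')^2\in\mathbb{C}[f,f',f'']$ by \eqref{chazy}. Thus $\mathbb{C}[f,f',f'']$ is stable under $D$, and this is precisely what collapses the a priori larger ring $\mathbb{C}[f,f',f'',\dots]$ of all derivatives down to three generators. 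It therefore suffices to show that each correlator is some differential polynomial in $f$ with constant coefficients.

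The second ingredient is that $D$ is realized geometrically as the insertion of the canonical degree-$2$ class. On the GW side $\omega$ is a divisor class on the curve $\E$ with $\int_\beta\omega=\deg\beta$, so the divisor equation reads
\[
\LL\omega,\alpha_1\psi^{\ell_1},\dots,\alpha_n\psi^{\ell_n}\RR_{g,n+1}^{\E}
=D\LL\alpha_1\psi^{\ell_1},\dots,\alpha_n\psi^{\ell_n}\RR_{g,n}^{\E}
+\sum_i\LL\dots,(\omega\cup\alpha_i)\psi^{\ell_i-1},\dots\RR_{g,n}^{\E},
\]
whose correction terms strictly lower the total $\psi$-degree. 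On the FJRW side the identical relation holds with $D=\partial_s$, now by the very definition \eqref{FJRW-function} of the ancestor series as an exponential in $s\phi$. Hence, modulo corrections of lower complexity, inserting the degree-$2$ class is the derivation $D$, which preserves $\mathbb{C}[f,f',f'']$ by the first step.

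The core of the argument is a descending induction on the genus, with the tautological relations of \cite{Ionel: 2002, Faber: 2005} supplying the inductive step. The selection rule \eqref{gw-degree} together with $\deg\alpha_k\le2$ forces $\sum_k\ell_k=2g-2+n-\tfrac12\sum_k\deg\alpha_k\ge2g-2$, hence $\ge g$ once $g\ge2$. By Getzler--Ionel vanishing a $\psi$--$\kappa$ monomial of degree $\ge g$ on $\overline{\M}_{g,n}$ ($n\ge1$) is supported on the boundary; substituting such an expression for $\prod_k\psi_k^{\ell_k}$ and invoking the splitting and genus-reduction axioms of the CohFT writes the correlator as a finite sum of products of correlators of strictly smaller genus (a non-separating node drops $g$ to $g-1$, a separating node splits $g=g_1+g_2$ with $g_i<g$), with the node contracted against the metric. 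Each factor lies in $\mathbb{C}[f,f',f'']$ by induction, so the correlator does too. The base cases are $g=0$, where the only contributions are classical ($\E$ admits no non-constant rational maps, so only $\beta=0$ survives, and for FJRW one uses the known genus-$0$ Frobenius structure of the Fermat cubic), supplying the constant coefficients; and $g=1$, where the genus-$1$ topological recursion relation together with induction on $n$ removes all $\psi$-classes, the remaining stationary primaries $\LL\omega,\dots,\omega\RR_{1,n}^{\E}$ are obtained from $\LL\omega\RR_{1,1}^{\E}=-f/24$ by iterating the divisor equation (i.e.\ by applying $D$), and so lie in $\mathbb{C}[f,f',f'']$; the FJRW genus-$1$ case is identical with $\phi,\partial_s$ in place of $\omega,q\partial_q$.

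The main obstacle will be the insertions that are not the degree-$2$ class: the odd classes in $H^1(\E)$ on the GW side and the corresponding broad sectors on the FJRW side. These produce no derivative under the divisor equation, and they re-enter through the metric contraction at each node in the genus-reduction step, so one must verify that the recursion still closes within $\mathbb{C}[f,f',f'']$. This is controlled by the pairing and the selection rules---odd classes pair only with odd classes and must occur in even number---so that at every $(g,n)$ only finitely many such terms contribute and each is, by induction, a differential polynomial in $f$. Making this odd-sector bookkeeping and the genus-$1$ TRR reduction precise, and checking termination, is the technical crux.
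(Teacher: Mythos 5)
Your proposal follows essentially the same route as the paper's proof: the degree constraints force $\sum_k \ell_k \geq 2g-2 \geq g$ for $g\geq 2$, the Ionel/Faber--Pandharipande boundary expression plus the splitting axiom reduce everything to genus-zero and genus-one primary correlators, and the Chazy equation collapses $\mathbb{C}[f,f',f'',\dots]$ to $\mathbb{C}[f,f',f'']$. The ``odd-sector'' issue you flag as the technical crux is in fact dispatched by the very selection rules you cite: the dimension constraint \eqref{gw-degree}/\eqref{lg-degree} forces any non-vanishing genus-one primary correlator to have all insertions equal to $\omega$ (resp.\ $\phi$), so odd classes survive only inside the genus-zero three-point constants of Proposition \ref{genus-zero-vanishing}, and the recursion closes exactly as in the paper.
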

%Thus the genus-one correlation functions serve as the building block for each theory.

\subsubsection{LG/CY correspondence via Cayley transformation}

By direct calculation, we can show $\LL\omega\RR_{1,1}^{\E}(q)$ and $\LL\phi\RR^{(W, \langle J_\delta\rangle)}_{1,1}(s)$ are expansions of the same quasi-modular form $-(1/24)\cdot E_2(\tau)$
at two different points on the upper-half plane. In particular, the GW functions are Fourier expansion around the cusp $\tau=\sqrt{-1}\infty$.
This viewpoint allows us to relate  the GW functions  in \eqref{GW-function} and  the FJRW functions  in \eqref{FJRW-function} by a variant of the Cayley transformation
which we now briefly review following
\cite{Shen:2016}.

For any point $\tau_*\in \mathbb{H}$, there exists a Cayley transform that maps a point $\tau$ on the upper half-plane $\mathbb{H}$ to a point $s(\tau)$ in the unit disk $\mathbb{D}$, that is,
\begin{equation*}
s(\tau)=(\tau_*-\bar{\tau}_*){\tau-\tau_*\over \tau-\bar{\tau}_*}\,.
\end{equation*}
This transform is biholomorphic and we denote its inverse by $\tau(s)$.
Following \cite{Zagier:2008} and \cite{Shen:2016}, there exists a Cayley transformation %$\mathscr{C}_{ \tau_* }$ 
that maps a weight-$k$ almost-holomorphic modular form
$$\widehat{f}\in \widehat{M}_*(\Gamma)=\mathbb{C}[\widehat{E}_2(\tau,\bar\tau), E_4(\tau), E_6(\tau)]$$ 
to
\begin{equation}
\label{automorphic-factor}
%\mathscr{C}_{ \tau_* }: \widehat{f}(\tau,\bar{\tau}) \mapsto
\left({\tau(s)-\bar\tau_{*} \over \tau_{*}-\bar\tau_{*} }\right)^k \cdot  \widehat{f}\left(\tau(s),\overline{\tau(s)}\right)\,.
\end{equation}
The Taylor expansion of the image gives a natural way to expand the almost-holomorphic modular form $\hat{f}$ near $\tau=\tau_*$, where the local complex coordinate is $s(\tau)$.

Using the fact that the two rings $\widetilde{M}_*(\Gamma)$ and $\widehat{M}_*(\Gamma)$ are isomorphic differential ring, a {\em holomorphic Cayley transformation} $\mathscr{C}_{ \tau_* }^{\rm hol}$ (see Section \ref{secLGCY})
can then be defined  \cite{Shen:2016}.
This turns out to be the correct transformation that relates the GW correlation functions  in \eqref{GW-function} and  the FJRW correlation functions  in \eqref{FJRW-function}, both of which are holomorphic. 
%Using the global properties of quasi-modular forms, we will relate the all-genus GW functions and the FJRW functions.
%One of the main results of this paper is
It allows us to solve the LG/CY Correspondence Conjecture for the Fermat cubic pair. %$(W=x_1^3+x_2^3+x_3^3, J_\delta)$.
\begin{thm}
\label{main-theorem}
Consider the Fermat cubic polynomial $W=x_1^3+x_2^3+x_3^3$ and the LG space $([\mathbb{C}^3/ \mu_3], W)$. There exists a degree- and grading-preserving vector space isomorphism
\begin{equation*}
\Psi: \mathscr{H}_{\E}=H^*(\E)\longrightarrow \mathscr{H}_{(W,\mu_3)}
\end{equation*}
%a holomorphic Cayley transformation $\mathscr{C}_{\rm hol}$ with respect to
%an elliptic point 
and a holomorphic Cayley transformation $\mathscr{C}_{ \tau_* }^{\rm hol}$ with 
$$\tau_*%={3\over 6}+{\sqrt{-1}\over 6}
=-{\sqrt{-1}\over \sqrt{3}} \exp({2\pi \sqrt{-1}\over 3})
\in\mathbb{H}\,,$$
such that
\begin{equation*}
\mathscr{C}_{ \tau_* }^{\rm hol}\left(\LL\alpha_1\psi_1^{\ell_1}, \cdots, \alpha_n\psi_n^{\ell_n}\RR_{g,n}^{\E}(q)\right)
=\LL\Psi(\alpha_1)\psi_1^{\ell_1},\cdots, \Psi(\alpha_n)\psi_n^{\ell_n}\RR_{g,n}^{(W, \mu_3) }(s)\,.
\end{equation*}
%The LG/CY correspondence of the pair $(G=\mu_3, W=x_1^3+x_2^3+x_3^3)$ can be realized as holomorphic Cayley transformation of quasi-modular forms.
\end{thm}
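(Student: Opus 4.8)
The plan is to reduce the all-genus, all-descendant statement of Theorem \ref{main-theorem} to the single genus-one input identified in Propositions \ref{main-lemma1} and \ref{main-lemma2}, and then to show that the holomorphic Cayley transformation intertwines the reconstruction on both sides. First I would set up the vector space isomorphism $\Psi$ explicitly: since $\mathscr{H}_{\E}=H^*(\E)$ is spanned by the unit, the two degree-one classes, and the point class $\omega$, while $\mathscr{H}_{(W,\mu_3)}$ carries the canonical degree-$2$ element $\phi$ together with its Poincar\'e-dual partners, I would define $\Psi$ to match the unit to the unit, $\omega$ to $\phi$, and the odd classes compatibly with the pairing $\langle\,,\,\rangle$. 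The key point is that $\Psi$ is degree- and grading-preserving, so it is forced up to the normalizations of the pairing; I would check that both CohFTs have isomorphic flat metrics on these state spaces, which makes $\Psi$ an isometry.

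The main structural step is that both correlation functions are \emph{reconstructed} from a single genus-one function by the same universal tautological formulas. By Proposition \ref{main-lemma2}, every GW correlator $\LL\alpha_1\psi_1^{\ell_1},\cdots\RR_{g,n}^{\E}(q)$ lies in $\C[f,f',f'']$ with $f=-24\LL\omega\RR_{1,1}^{\E}$, and every FJRW correlator lies in $\C[g,g',g'']$ with $g=-24\LL\phi\RR_{1,1}^{(W,\langle J_\delta\rangle)}$, where both $f$ and $g$ solve the \emph{same} Chazy equation \eqref{chazy} by Proposition \ref{main-lemma1}. The crucial observation recorded in the excerpt is that $f$ and $g$ are expansions of the identical quasi-modular form $-(1/24)E_2(\tau)$ at the two points $\tau=\sqrt{-1}\infty$ and $\tau=\tau_*$. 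I would therefore argue that the reconstruction polynomials are \emph{identical} on the two sides: they come from Givental--Teleman--style tautological relations (Belorousski--Pandharipande, Ionel, Getzler--Faber) that hold universally for any CohFT on a one-dimensional state-space of this type, so the polynomial expressing a correlator in terms of $(f,f',f'')$ depends only on the abstract CohFT structure through $\Psi$, not on whether we are in the GW or FJRW realization.

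With this in hand the theorem becomes a statement purely about quasi-modular forms. The holomorphic Cayley transformation $\mathscr{C}_{\tau_*}^{\rm hol}$ is, by construction from \cite{Zagier:2008, Shen:2016}, a ring isomorphism that carries the $q$-expansion of any element of $\widetilde{M}_*(\Gamma)$ at the infinity cusp to its $s$-expansion at $\tau_*$, and it commutes with the derivation $f\mapsto f'$ once we match the two normalizations $f'(q)=q\partial_q f$ and $f'(s)=\partial_s f$. Since $\mathscr{C}_{\tau_*}^{\rm hol}$ sends $E_2$ (hence $f$) to the corresponding expansion (hence $g$), and preserves $E_4,E_6$ and the derivation, it sends every polynomial $P(f,f',f'')$ to $P(g,g',g'')$. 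Applying this to the reconstruction formulas gives
\begin{equation*}
\mathscr{C}_{\tau_*}^{\rm hol}\left(\LL\alpha_1\psi_1^{\ell_1},\cdots,\alpha_n\psi_n^{\ell_n}\RR_{g,n}^{\E}(q)\right)
=\LL\Psi(\alpha_1)\psi_1^{\ell_1},\cdots,\Psi(\alpha_n)\psi_n^{\ell_n}\RR_{g,n}^{(W,\mu_3)}(s)\,,
\end{equation*}
which is exactly the claim.

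The step I expect to be the main obstacle is verifying that the reconstruction from the genus-one function proceeds through \emph{literally the same} tautological-relation machinery on both sides, so that the polynomial expressions in $(f,f',f'')$ agree after applying $\Psi$. This requires that the FJRW CohFT and the GW CohFT have matching structure constants under $\Psi$ at the base of the reconstruction, in particular that the string, dilaton, and degree axioms, together with the insertion of the degree-$2$ class ($\omega$ versus $\phi$), enter the tautological relations in the same way; because neither CohFT is semisimple, one cannot invoke Givental--Teleman and must instead track these relations by hand. A second subtlety is the careful matching of normalizations and automorphic factors in \eqref{automorphic-factor}, namely confirming that the weight-$k$ twisting in $\mathscr{C}_{\tau_*}^{\rm hol}$ is exactly compensated by the dimension constraint \eqref{gw-degree} so that the transformation respects the bigrading by genus and number of insertions; this is where the specific value of $\tau_*$ and the factor $-24$ must be pinned down precisely.
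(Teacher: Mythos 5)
Your overall architecture --- reconstruct all correlators from the genus-one functions via Propositions \ref{main-lemma1} and \ref{main-lemma2}, then let the holomorphic Cayley transformation, as a differential-ring isomorphism, carry the GW reconstruction polynomials to the FJRW ones --- is exactly the paper's strategy, and your treatment of $\Psi$ and of the universality of the $g$-reduction (which the paper settles via Proposition \ref{genus-zero-vanishing}, making the two genus-zero primary potentials literally equal under $\Psi$) is sound. The genuine gap is the step you describe as ``the crucial observation recorded in the excerpt'': that $\LL\omega\RR_{1,1}^{\E}(q)$ and $\LL\phi\RR_{1,1}^{(W,\mu_3)}(s)$ are expansions of the \emph{same} quasi-modular form $-E_2/24$ at $\sqrt{-1}\infty$ and at $\tau_*$. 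You treat this as an input to be quoted, but it is the analytic heart of the theorem, and nothing in your argument proves it.

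The Chazy equation alone cannot deliver this identification: its solution space is large (in particular, the holomorphic Cayley transforms of $E_2$ based at \emph{every} point of $\mathbb{H}$, for every normalization constant $c$, are solutions), so one must pin down initial conditions on each side separately. The paper does this as follows: on the GW side it checks $\LD\omega\RD_{1,1,0}^{\E}=-{1/24}$ and $\LD\omega\RD_{1,1,1}^{\E}=1$; on the FJRW side it shows $\Theta_{1,1}=\Theta_{1,2}=0$ by the selection rule and, crucially, computes $\Theta_{1,3}=\frac{1}{108}$ by the cosection-localization and Witten-class analysis of Section \ref{secgenusoneinitialvalues} (Proposition \ref{propinitialFJRWinvariant}), and then verifies that these Taylor coefficients agree with the explicit expansion \eqref{eqnellipticexpansionsofE2} of $\mathscr{C}_{\tau_*}^{\rm hol}(E_2)$ for the specific $\tau_*$ and $c$ of \eqref{eqnellipticpoint} and \eqref{eqnchoiceforc}. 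Only then does uniqueness of Chazy solutions with prescribed leading terms identify the two series, after which your ring-isomorphism argument finishes the proof. Without this geometric computation your proposal cannot even determine $\tau_*$: the value of $\tau_*$ appearing in the theorem is an \emph{output} of the computation $\Theta_{1,3}=\frac{1}{108}$, not a datum one is free to assume.
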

The explicit construction of $\Psi$ and $\mathscr{C}_{ \tau_* }^{\rm hol}$ will be given in Section \ref{secLGCY}.
%The point $\tau_{*}\in \mathbb{H}$ is called an elliptic point since it lies in the $\mathrm{SL}_{2}(\mathbb{Z})$-orbit of %$\tau=\sqrt{-1}$ or
%$\exp({2\pi \sqrt{-1}\over 3})$.

%This version of LG/CY correspondence was previously studied in \cite{Shen:2016} for elliptic orbifold curves and can be generalized to
%Our approach in proving the LG/CY correspondence 
Theorem \ref{main-theorem} can be generalized to 
the rest of the one-dimensional CY type weight systems in \eqref{weight-one} straightforwardly:
the only difference lies in the technical computations on the initial genus-one FJRW invariants. % in $\LL\phi\RR^{(W, \langle J_\delta\rangle)}_{1,1}$. 
This approach of using modular forms was previously introduced in \cite{Shen:2016} for elliptic orbifold curves.

It is worthwhile to mention that for one-dimensional CY type weight systems, our approach of the LG/CY correspondence is compatible with the I-function approach introduced in \cite{Chiodo:2010, Milanov:2011}.
In fact, %for the Fermat cubic pair,
the automorphy factor in the Cayley transformation \eqref{automorphic-factor} provides the equivalent information as the symplectic
transformation that appears in \cite[Corollary 4.2.4]{Chiodo:2010}.

\subsection{Applications: higher-genus FJRW invariants and their structures}
% of  FJRW correlation functions}

%Few examples of all-genus FJRW invariants for non-semisimple CohFTs are known in the literature. 
The higher-genus FJRW invariants are very difficult to compute in general.
In our example, 
with the identification of the correlation functions with quasi-modular forms, various results from the GW-side can be transformed
into the LG-side, by the virtue of the holomorphic Cayley transformation
which respects the differential ring structure of quasi-modular forms.
In  particular,  higher-genus 
FJRW invariants can be computed easily and nice structures of the FJRW correlation functions can be obtained for free.

Indeed,  higher-genus FJRW invariants are determined from the results on descendent GW invariants of elliptic curves
given by Bloch-Okounkov \cite{Bloch:2000}, whose generating series admit very concrete and beautiful formulae.
%Using Givental's formulation, these yield expressions for the invariants in the ancestor theory which is the main concern of this work. Then the holomorphic Cayley transformation allows us to compute the FJRW invariants of the Fermat elliptic polynomials at all genera basing on these formulae.
The following gives a sample of the computations.
\begin{cor}\label{coronepointFJRW}
For the $d=3$ case, the following holds for the ancestor FJRW correlation functions
\begin{equation*}
%\mathscr{C}_{ \tau_{*}}(\eta)
\LL  \phi\psi_1^{2g-2}\RR_{g,1}^{(W, \mu_3)}= \sum_{\substack{\ell, m,n\geq 0\\
\ell+2m+3n=g}}
{ b_{m,n}\over \ell!} \left(-{\mathscr{C}_{ \tau_{*}}^{\mathrm{hol}}(E_2)\over 24} \right)^{\ell}   \left({ \mathscr{C}_{ \tau_{*}}^{\mathrm{hol}}(E_4)\over 24}\right)^{m} \left( - { \mathscr{C}_{ \tau_{*}}^{\mathrm{hol}}(E_6)\over 108}\right)^{n}
 \,,
\end{equation*}
where $\mathscr{C}_{ \tau_{*}}^{\mathrm{hol}}(E_{2i}),i=1,2,3$ are holomorphic Cayley transformations of the Eisenstein series $E_2,E_4,E_6$
whose expansions can be computed explicitly, while $\{b_{m,n}\}_{m,n}$ are rational numbers that can be obtained recursively.
\end{cor}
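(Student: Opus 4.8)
The plan is to deduce this formula from the all-genus correspondence of Theorem~\ref{main-theorem}, by first computing the corresponding one-point descendent \emph{Gromov-Witten} series on the elliptic curve and then transporting it to the LG side via the holomorphic Cayley transformation. First I would record that the degree constraint \eqref{gw-degree} is met: with $\alpha_1=\omega$ of degree $2$ and $\ell_1=2g-2$, the left-hand side equals $1+(2g-2)=2g-1=2g-2+1$, so this one-point series is the relevant stationary invariant at genus $g$. Under the degree- and grading-preserving isomorphism $\Psi$ of Theorem~\ref{main-theorem}, the point class $\omega\in H^2(\E)$ is carried to the canonical degree-$2$ element $\phi$ (up to the normalization fixed by the explicit construction in Section~\ref{secLGCY}). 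Applying the correspondence to the single insertion $\omega\psi_1^{2g-2}$ therefore gives
\begin{equation*}
\LL\phi\psi_1^{2g-2}\RR_{g,1}^{(W,\mu_3)}(s)
=\mathscr{C}_{\tau_*}^{\mathrm{hol}}\!\left(\LL\omega\psi_1^{2g-2}\RR_{g,1}^{\E}(q)\right),
\end{equation*}
which reduces the claim to an identity purely on the GW side together with the algebraic behavior of $\mathscr{C}_{\tau_*}^{\mathrm{hol}}$.

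Next I would compute the GW one-point function as an explicit quasi-modular form. By Proposition~\ref{main-lemma2} (with $f=-24\LL\omega\RR_{1,1}^{\E}=E_2$) the series $\LL\omega\psi_1^{2g-2}\RR_{g,1}^{\E}(q)$ lies in $\mathbb{C}[f,f',f'']=\widetilde{M}_*(\Gamma)=\mathbb{C}[E_2,E_4,E_6]$, and the virtual-degree count together with the modular behavior of the GW descendent series forces it to be homogeneous of weight $2g$. Hence it is a $\mathbb{C}$-linear combination of the monomials $E_2^\ell E_4^m E_6^n$ with $2\ell+4m+6n=2g$, i.e. $\ell+2m+3n=g$. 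Writing it in the normalized basis
\begin{equation*}
\LL\omega\psi_1^{2g-2}\RR_{g,1}^{\E}(q)
=\sum_{\substack{\ell,m,n\geq 0\\ \ell+2m+3n=g}}
\frac{b_{m,n}}{\ell!}
\left(-\frac{E_2}{24}\right)^{\ell}
\left(\frac{E_4}{24}\right)^{m}
\left(-\frac{E_6}{108}\right)^{n}
\end{equation*}
\emph{defines} the rational numbers $b_{m,n}$ (one checks $b_{0,0}=1$ from \eqref{genus-one-elliptic} at $g=1$). The actual determination of these coefficients uses the Bloch-Okounkov formulae \cite{Bloch:2000} for stationary descendent one-point functions on the elliptic curve: their generating series is an explicit quasi-modular object whose Taylor coefficients I would expand against $E_2,E_4,E_6$ using the Ramanujan identities \eqref{ramanujan}, thereby producing the recursion for the $b_{m,n}$.

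Finally I would invoke the fact that $\mathscr{C}_{\tau_*}^{\mathrm{hol}}$ is a homomorphism of differential rings on quasi-modular forms, so it commutes with sums, scalar multiples, and products. Applying it term-by-term to the displayed GW formula and using $\mathscr{C}_{\tau_*}^{\mathrm{hol}}(E_2^\ell E_4^m E_6^n)=\mathscr{C}_{\tau_*}^{\mathrm{hol}}(E_2)^\ell\,\mathscr{C}_{\tau_*}^{\mathrm{hol}}(E_4)^m\,\mathscr{C}_{\tau_*}^{\mathrm{hol}}(E_6)^n$ yields exactly the stated expression for $\LL\phi\psi_1^{2g-2}\RR_{g,1}^{(W,\mu_3)}$ with the same coefficients $b_{m,n}$; the explicit expandability of $\mathscr{C}_{\tau_*}^{\mathrm{hol}}(E_{2i})$ follows from the construction in Section~\ref{secLGCY}. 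I expect the main obstacle to be the middle step: extracting the precise GW one-point quasi-modular form from Bloch-Okounkov and pinning down the recursion and normalizations for $b_{m,n}$, since the raw generating-function output must be matched to the weighted basis $\{(-E_2/24)^\ell(E_4/24)^m(-E_6/108)^n\}$, and the weight-$2g$ homogeneity together with the leading $1/\ell!$ structure (reflecting the repeated $E_2$/string-type contributions) must be verified carefully.
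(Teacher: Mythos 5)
Your overall strategy---reduce to the GW side via Theorem \ref{main-theorem} and transport the answer through $\mathscr{C}_{\tau_*}^{\mathrm{hol}}$ using its differential-ring-homomorphism property---is exactly the paper's, and your first and last steps are sound. The genuine gap is the middle step, which you yourself flag as ``the main obstacle'' but leave unproved, and the route you sketch for it would not close it. First, the Bloch--Okounkov formula \eqref{eqnfirstfewNpointfunctions} computes \emph{disconnected descendent} invariants $\LL\omega\wpsi^{k}\RR^{\bullet\E}$, whereas your reduction (and Theorem \ref{main-theorem}) concerns \emph{connected ancestor} invariants $\LL\omega\psi^{k}\RR^{\E}_{g,1}$. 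These differ: the paper needs Proposition \ref{lem-ancestor-descedent-stationary}, proved via Givental's quantization of the $S$-operator, to show the ancestor series equals $(q)_\infty$ times the descendent one, together with the observation that connected and disconnected one-point ancestor functions coincide (no-insertion invariants vanish by \eqref{gw-degree}). Feeding the raw Bloch--Okounkov output into your final step gives an answer off by the non-quasi-modular factor $(q)_\infty$; in particular your assertion that ``the modular behavior of the GW descendent series'' forces weight-$2g$ homogeneity is false without this correction---only the $(q)_\infty$-rescaled (equivalently, ancestor) series is quasi-modular.

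Second, and more seriously, your plan to \emph{define} $b_{m,n}$ by expanding the one-point function in the basis and then ``verify carefully'' does not establish the statement: the content of the corollary is that the coefficient of $E_2^{\ell}E_4^{m}E_6^{n}$ has the factorized form $b_{m,n}/\ell!$ with $b_{m,n}$ independent of $\ell$ \emph{and of $g$}, so that a single recursively computable family $\{b_{m,n}\}$ serves all genera simultaneously. A genus-by-genus expansion against the basis, even using the Ramanujan identities \eqref{ramanujan}, only produces coefficients $c^{(g)}_{\ell,m,n}$ with no a priori reason to factor this way. The mechanism in the paper is the prime-form factorization \eqref{eqnprimeform}, $\Theta(z)=2\pi\sqrt{-1}\,e^{E_2z^2/24}\sigma(z)$: in $F_1(z,q)=1/((q)_\infty\Theta(z))$ the entire $E_2$-dependence sits in the exponential prefactor, whose expansion produces precisely the $(-E_2/24)^{\ell}/\ell!$ terms, while $1/\sigma$ expands in $E_4,E_6$ alone via the classical Weierstrass recursion \eqref{eqnsigmaexpansionintermsofE4E6}, which is what defines the $b_{m,n}$ and yields their recursion. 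Without identifying this factorization (or some substitute for it), your middle step has no proof.
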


The holomorphic anomaly equations (HAE) discovered in \cite{OP18}
and the Virasoro constraints discovered in \cite{Okounkov-vir}
 for the GW theory of elliptic curves also carry over to the corresponding FJRW theory.
See Corollary \ref{hae-lg} and Corollary \ref{virasoro-lg} for the explicit statements.
 \\

\paragraph*{\bf Plan of the paper}

In Section \ref{secreconstruction} we review the basic construction of CohFTs,
and use tautological relations in particular the Belorousski-Pandharipande relation to prove
Proposition \ref{main-lemma1} and
Proposition \ref{main-lemma2}.
In Section \ref{secgenusoneinitialvalues} we calculate a genus-one FJRW invariant for the $d=3$ case
using cosection localization.
In Section \ref{secLGCY} we prove Theorem \ref{main-theorem} using properties of quasi-modular forms.
In Section \ref{secapplications1}
we review some results on GW invariants for the elliptic curve and discuss the 
ancestor/descendent correspondence.
 In Section  \ref{secapplications2} we give some applications of the quasi-modularity of the GW and FJRW theory for the $d=3$ case, 
 such as  the explicit computations of higher-genus FJRW invariants basing on the results on the GW invariants of the elliptic curve, 
 the derivation of holomorphic anomaly equations and Virasoro constraints they satisfy.\\

\paragraph*{\bf Acknowledgement}
Y. Shen would like to thank Qizheng Yin, Aaron Pixton, and Felix Janda for inspiring discussions on tautological relations.
J. Zhou thanks Baosen Wu and Zijun Zhou for useful discussions.
%We would like to thank Drew Johnson for his code of intersection numbers.

%J. Li is partially supported by NSF grant DMS-1104553, DMS-1159156, and DMS1564500. 
%J. Li was partially supported by NSF grant DMS-1564500 and DMS-1601211.
J. Li is partially supported by National Natural Science Foundation of China no. 12071079.
Y. Shen is partially supported by Simons Collaboration Grant 587119.
J. Zhou is supported by a start-up grant at Tsinghua University, the Young overseas high-level talents introduction plan of China, and
the national key research and development program of China (No. 2020YFA0713000).
Part of J. Zhou's work was done while he was a postdoc at the Mathematical Institute of University of Cologne and was partially supported by German Research Foundation Grant CRC/TRR 191.

\section{Belorousski-Pandharipande relation and Chazy equation}
\label{secreconstruction}

We study the two Cohomological Field Theories (both GW theory and FJRW theory) for the one-dimensional CY type weight systems using tautological relations and axioms of CohFTs. The key is the identification between
Belorousski-Pandharipande relation and Chazy equation.

\subsection{Cohomological field theories}\label{cohft-def}
Both the GW theory and FJRW theory of the LG space $([\mathbb{C}^N/G], W)$ satisfy axioms of Cohomological Field Theories (CohFT) in the sense
of \cite{Kontsevich:1994}, which we briefly recall now.

Let ${\overline{\mathcal{M}}_{g,n}}$ be the Deligne-Mumford moduli stack of genus
$g$ stable (i.e., $2g-2+n>0$) curves with $n$ markings.
A \emph{Cohomological Field Theory with a flat identity} is a quadruple
\begin{equation*}
(\mathscr{H}, \eta, {\bf 1}, \Lambda)\,,
\end{equation*}
where the \emph{state space}
\begin{equation*}
\mathscr{H}:=\mathscr{H}^{\rm even}\bigoplus\mathscr{H}^{\rm odd}
\end{equation*}
is a
$\mathbb{Z}_2$-graded finite dimensional $\C$-vector space (called superspace in \cite{Kontsevich:1994}),
$\eta$ is a non-degenerate pairing on $\mathscr{H}$,
$\one\in\mathscr{H}$ is the \emph{flat identity},
and
\begin{equation*}
\Lambda:=\left\{\Lambda_{g,n}\in {\rm Hom}\big(\mathscr{H}^{\otimes n}, H^*({\overline{\mathcal{M}}_{g,n}},\C)\big) \right\}
\end{equation*}
is a set of multi-linear maps satisfying the CohFT axioms below:
\begin{enumerate}
\item [(i)] Let $|\cdot|$ be the grading. The maps $\Lambda_{g,n}$ satisfy
\begin{equation}
\label{Z2-grading}\Lambda_{g,n}(\cdots, \alpha_1,\alpha_2, \cdots)=(-1)^{|\alpha_1|\cdot|\alpha_2|}\Lambda_{g,n}(\cdots, \alpha_2,\alpha_1, \cdots)\,.
\end{equation}
\item [(ii)] The maps in $\Lambda$ are compatible with the gluing and the forgetful morphisms
\begin{itemize}
\item $\overline{\mathcal{M}}_{g_1,n_1+1}\times \overline{\mathcal{M}}_{g_2,n_2+1}\to \overline{\mathcal{M}}_{g,n}$ and
$\overline{\mathcal{M}}_{g-1,n+2}\to \overline{\mathcal{M}}_{g,n}$; % gluing together two markings of a curve,
\item $\pi: \overline{\mathcal{M}}_{g,n+1}\to \overline{\mathcal{M}}_{g,n}$ forgetting one of the markings.
\end{itemize}
%satisfying CohFT axioms including gluing axioms and the flat identity axiom.
%\item [(iii)] These maps are compatible with the forgetful morphism $\pi: \overline{\mathcal{M}}_{g,n+1}\to \overline{\mathcal{M}}_{g,n}$,
For example, the compatibility with the forgetting morphism is
\begin{equation}
\label{string}
\Lambda_{g,n+1}(\alpha_1, \cdots, \alpha_n, \one)=\pi^*\Lambda_{g,n}(\alpha_1, \cdots, \alpha_n)\,.
\end{equation}
\item [(iii)]
The pairing $\eta$ is compatible with $\Lambda_{0,3}$:
\begin{equation*}
\int_{\overline{\mathcal{M}}_{0,3}}\Lambda_{0,3}(\alpha_1,\alpha_2, \one)=\eta(\alpha_1,\alpha_2)\,.
\end{equation*}
\end{enumerate}

Let $\psi_k\in H^2(\overline{\mathcal{M}}_{g,n})$ be the cotangent line class at the $k$-th marking. For each CohFT $(\mathscr{H}, \eta, {\bf 1}, \Lambda)$,
one defines the quantum invariants from $\Lambda$ by
\begin{equation}
\label{quantum-invariant}
\LD\alpha_1\psi_1^{\ell_1},\cdots,\alpha_n\psi_n^{\ell_n}\RD_{g,n}^{\Lambda}
:=\int_{\overline{\mathcal{M}}_{g,n}}\Lambda_{g,n}(\alpha_1,\cdots,\alpha_n)\prod_{k=1}^n\psi_k^{\ell_k}, \quad \alpha_k\in\mathscr{H}\,.
\end{equation}
Such invariants are called the ancestor GW invariants for the GW CohFT and FJRW invariants for the LG CohFT.
Our focus is the relation between these two types of invariants arising from the same CY type LG space $([\mathbb{C}^N/G], W)$.

Fix a basis $\mathcal{B}$ for  $\mathscr{H}$. 
It is convenient to choose the elements $\alpha_k$ from $ \mathcal{B}$
and parametrize $\alpha_k$ by $s_k$. We introduce the {\em genus-zero primary potential of the CohFT} as a formal power series
\begin{equation}
\label{primary-potential}
\mathcal{F}_0^{\Lambda}:=\sum_{n\geq 0}\sum_{\alpha_k\in \mathcal{B}}{1\over n!}
\LD\alpha_1,\cdots,\alpha_n\RD_{0,n}^{\Lambda}\prod_{k=1}^{n}s_k\,.
\end{equation}
Here primary means all $\ell_k=0$ in \eqref{quantum-invariant}.

\subsubsection{FJRW invariants}

The CohFTs arising from GW theories have become a familiar topic since \cite{Kontsevich:1994}.
Here we only recall some basics on the LG CohFT constructed from the FJRW invariants defined in \cite{Fan:book, Fan:2013}. % \cite{CLL, PV, KL, CKL} for details.}
See also \cite{CLL, PV, KL, CKL} for various CohFT constructions for LG models.

As $G$ acts on $\C^N$, for any $\gamma\in G$, the fixed-point set ${\rm Fix}(\gamma)$ is an
$N_\gamma$-dimensional subspace of $\C^N$.
Let $W_{\gamma}$ be the restriction of $W$ on ${\rm Fix}(\gamma)$. Following \cite{Fan:2013},
one considers the graded vector space (called the FJRW state space)
\begin{equation}
\mathscr{H}_{(W,G)}=\bigoplus_{\gamma\in G} \mathscr{H}_{\gamma}\,,
\end{equation}
where each $\mathscr{H}_{\gamma}$ is the space of G-invariants of the middle-dimensional relative cohomology in ${\rm Fix}(\gamma)$.
There is a natural pairing $\langle , \rangle$ and an isomorphism
(see \cite[Section 5.1]{Fan:2013})
\begin{equation}
\label{residue-pairing}
\Big(\mathscr{H}_{(W,G)}, \langle , \rangle\Big)\cong
\Big( \bigoplus_{\gamma\in G}\left({\rm Jac}(W_\gamma)\Omega_{{\rm Fix}(\gamma)}\right)^{G}\,,~ {\rm Res}\Big)\,.
\end{equation}
Here ${\rm Jac}(W_\gamma)$ is the \emph{Jacobi algebra} of $W_\gamma$, $\Omega_{{\rm Fix}(\gamma)}$ is the standard
holomorphic volume form on ${\rm Fix}(\gamma)$ and ${\rm Res}$ is the residue pairing.

In \cite{Fan:book, Fan:2013}, Fan-Jarvis-Ruan constructed the virtual fundamental cycle over the moduli space of $W$-spin structures, and
a corresponding CohFT $$\left(\mathscr{H}_{(W,G)}, \langle , \rangle, \one, \Lambda^{(W, G)}\right).$$
% based on the virtual fundamental cycles.
This CohFT defines the so-called \emph{FJRW invariants} $\LD\alpha_1\psi_1^{\ell_1},\cdots,\alpha_n\psi_n^{\ell_n}\RD_{g,n}^{(W, G)}$ through \eqref{quantum-invariant}.
\\

%\subsubsection{One dimensional weight system}
We now specialize to a pair $(W, G)$ given in \eqref{fermat-elliptic} with  $G=\langle J_\delta\rangle$.  %The central charge $\widehat{c}_{W_d}=1$.
For a set of homogeneous elements $\alpha_k\in\mathscr{H}_{\gamma_k},k=1,2,\cdots n$,
the dimension formula in \cite[Theorem 4.1.8]{Fan:2013} shows
if $\LD\alpha_1\psi_1^{\ell_1},\cdots,\alpha_n\psi_n^{\ell_n}\RD_{g,n}^{(W,\langle J_\delta\rangle)}$ is non-trivial, then
\begin{equation}
\label{lg-degree}
2g-2+n=\sum_{k=1}^{n}{\deg\alpha_k\over 2}+\sum_{k=1}^{n}\ell_k\,.
\end{equation}
We remark that both $\mathscr{H}_{J_{\delta}}$ and $\mathscr{H}_{J_{\delta}^{-1}}$ are one-dimensional: $\mathscr{H}_{J_{\delta}}$ is spanned
by the flat identity $\one\in\mathscr{H}_{J_{\delta}}$
and $\mathscr{H}_{J^{-1}_{\delta}}$ by a canonical degree-$2$ element
$\phi\in \mathscr{H}_{J^{-1}_{\delta}}$. We let $s$ be the corresponding linear coordinate of
the space $\mathscr{H}_{J^{-1}_{\delta}}$. The constraint \eqref{lg-degree} allows us to define the following
 \emph{ancestor FJRW correlation function} (as a formal series in $s$)
 \begin{equation}
% \label{FJRW-function}
 \LL\alpha_1\psi_1^{\ell_1},\cdots,\alpha_n\psi_n^{\ell_n}\RR_{g,n}^{(W,\langle J_\delta\rangle)}(s)
 :=\sum_{m=0}^{\infty}{1\over m!} \LD \alpha_1\psi_1^{\ell_1},\cdots,\alpha_n\psi_n^{\ell_n}, \underbrace{s\phi, \cdots, s\phi}_{m}\RD_{g,n+m}^{(W, \langle J_\delta\rangle)}.
 \end{equation}

In the following, we will use the subscript $"d"$ to label the CY type weight systems in \eqref{weight-one}.
Let $\Omega=dx_1\wedge dx_2\wedge dx_3$. For each polynomial $W_d$, when $d=3$ (resp. $4$; resp.  $6$), we consider the following element
\begin{equation}
h(W_d)=x_1x_2x_3/27 \ (\text{resp.}\ x_1^2x_2^2/32;\,   \text{resp.}\ x_1^4x_2/36)\,.
\end{equation}
 According to \eqref{residue-pairing}, the FJRW state space is
 \begin{equation}
 \label{cubic-basis}
 \mathscr{H}_{(W_d,G_d)}=\mathscr{H}_{J_{\delta}}\bigoplus\mathscr{H}_{J_{\delta}^{-1}}\bigoplus\mathscr{H}_{1\in G_d}=\C\{\one, \phi, \mathfrak{b}_1, \mathfrak{b}_2\}\,.
  \end{equation}
Here the even part is spanned by $\one\in \mathscr{H}_{J_{\delta}}$ and $\phi\in \mathscr{H}_{J_{\delta}^{-1}}$;
while the odd part is spanned by
 \begin{equation*}
 \mathfrak{b}_1=h(W_d)\Omega, \quad  \mathfrak{b}_2=\Omega \in \left({\rm Jac}(W_{d})\Omega\right)^{G}\subseteq \mathscr{H}_{1\in G_d}\,.
   \end{equation*}
The degrees are
\begin{equation}
\label{element-degree}
\deg\one=0\,, \quad \deg\mathfrak{b}_1=\deg\mathfrak{b}_2=1\,, \quad \deg\phi=2\,.
\end{equation}

\subsubsection{Genus-zero comparison}
We begin with a comparison between the genus-zero parts of the two theories.
On the GW-side, recall the state space for the elliptic curve $\E_{d}$ is $H^*(\E_{d}, \mathbb{C})$.
%$\langle J_\delta\rangle$-quotient of the hypersurface $W=0$ in $\mathbb{P}^{2}(w_1,w_2,w_3)$,
%The GW state space is then defined to be $\mathscr{H}_{\E}:=H^*(\E, \mathbb{C})$.
Let ${\bf 1}\in H^0$ be the identity of the cup product, and
$\omega\in H^2$ be the Poincar\'e dual of the point class.
We choose a symplectic basis $\{e_1, e_2\}$
of $H^1$ such that %$e^1\in H^{1,0}$, $e^2\in H^{0,1}$, and
\begin{equation*}
e_{1}\cup e_{2}=-e_{2}\cup e_{1}=\omega\,.
\end{equation*}
We define a linear map
$\Psi: H^*(\E_d)\to \mathscr{H}_{(W_d, \langle J_\delta\rangle)}$ by
\begin{equation}
\label{ring-iso}
\Psi(\one)=\one\,, \quad \Psi(\omega)=\phi\,, \quad \Psi(e_{i})=\mathfrak{b}_i\,, \quad i=1,2\,.
\end{equation}

Let $(t_{0},t_{1},t_{2},t)$ be the coordinates with respect to the basis $\{\one, e_{1}, e_{2},\omega\}$.
Similarly we let $(u_{0},u_{1},u_{2},u)$ be the coordinates
%of the FJRW state space
with respect to the basis $\{\one, \mathfrak{b}_1, \mathfrak{b}_2 ,\phi\}$.

The moduli stack $\overline{\M}_{g,n}(\E_d,\beta)$ is empty when $g=0$ and $\beta>0$.
Then according to \eqref{primary-potential}, the \emph{genus-zero primary GW potential} is
\begin{equation*}
\mathcal{F}^{\E_{d}}_0={1\over 2}\cdot t_{0}^2t+t_0 t_{1}t_{2}\,.
\end{equation*}
A calculation on residue shows that
\begin{equation}
\label{pairing-with-sign}
\LD\one, \one, \phi\RD_{0,3}^{W_d}=\LD\one, \mathfrak{b}_1, \mathfrak{b}_2\RD_{0,3}^{W_d}=1, \quad \LD\one, \mathfrak{b}_2, \mathfrak{b}_1\RD_{0,3}^{W_d}=-1.
\end{equation}
Thus the \emph{genus-zero primary FJRW potential} is 
\begin{equation*}
%\label{LG-genus-zero}
\mathcal{F}^{W_d}_0={1\over 2}\cdot u_{0}^2u+u_0u_{1}u_{2}+\text{quantum corrections}\,.
\end{equation*}
These quantum corrections %in \eqref{LG-genus-zero}
vanish as shown below.
This was firstly observed by Francis \cite[Section 4.2]{Francis:2015} using WDVV equations.

%Using WDVV equations and the structure of moduli of $W$-spin curves,
\begin{prop}
\label{genus-zero-vanishing}
The map $\Psi$ in \eqref{ring-iso} is a degree- and grading-preserving ring isomorphism, and
%The genus-zero primary FJRW potential is 
\begin{equation}
\label{primary-fjrw-potential}
\mathcal{F}^{W_d}_0={1\over 2}\cdot  u_{0}^2u+u_0u_{1}u_{2}\,.
\end{equation}
\end{prop}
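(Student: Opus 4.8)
The plan is to prove both assertions simultaneously by computing the genus-zero primary FJRW potential $\mathcal{F}^{W_d}_0$ term by term, using only the $\mathbb{Z}_2$-grading, the dimension constraint \eqref{lg-degree}, and the string equation encoded in the fundamental class axiom \eqref{string}. That $\Psi$ is degree- and grading-preserving is immediate from \eqref{element-degree} and the sector decomposition \eqref{cubic-basis}: $\Psi$ matches $\one,e_1,e_2,\omega$ (of degrees $0,1,1,2$ and parities even, odd, odd, even) with $\one,\mathfrak{b}_1,\mathfrak{b}_2,\phi$ of the same degrees and parities, and it respects the pairings since the Poincar\'e pairing on $H^*(\E_d)$ and the residue pairing \eqref{residue-pairing} agree on these bases by \eqref{pairing-with-sign}. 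Thus the ring-isomorphism claim reduces to showing that the two genus-zero Frobenius structures coincide, which follows once the two potentials are shown to be equal.

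First I would record the shape forced by the grading. Since $\mathfrak{b}_1,\mathfrak{b}_2$ are odd, the dual coordinates $u_1,u_2$ anticommute and square to zero, while $\mathcal{F}^{W_d}_0$ is an even function by \eqref{Z2-grading}. Hence the only monomials in $u_1,u_2$ that can appear are $1$ and $u_1u_2$, giving
\begin{equation*}
\mathcal{F}^{W_d}_0 = h(u_0,u) + u_1u_2\,k(u_0,u)
\end{equation*}
for power series $h,k$. Correlators contributing to $h$ have no $\mathfrak{b}$-insertions, and those contributing to $k$ have exactly one $\mathfrak{b}_1$ and one $\mathfrak{b}_2$; any correlator with four or more $\mathfrak{b}$-insertions contributes a monomial with a repeated odd coordinate and so drops out identically from the potential, regardless of the numerical value of the invariant.

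Next I would impose the dimension constraint \eqref{lg-degree}. Writing $p,q$ for the numbers of $\one$- and $\phi$-insertions and $r$ for the number of $\mathfrak{b}$-insertions, \eqref{lg-degree} in genus zero reads $2p+r=4$. In the $u_1u_2$-sector ($r=2$) this forces $p=1$, so each contributing correlator is $\LD\one,\mathfrak{b}_1,\mathfrak{b}_2,\phi,\dots,\phi\RD_{0,q+3}$; in the $h$-sector ($r=0$) it forces $p=2$, so each is $\LD\one,\one,\phi,\dots,\phi\RD_{0,q+2}$. In particular every surviving correlator carries at least one insertion of the flat identity $\one$. The decisive step is then the string equation: by the fundamental class axiom \eqref{string}, a primary genus-zero correlator with an $\one$-insertion and at least four markings is the integral over $\overline{\mathcal{M}}_{0,n+1}$ of a class pulled back along the forgetful morphism, hence vanishes. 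Therefore only the three-pointed terms survive, and with the values \eqref{pairing-with-sign} one gets $h(u_0,u)=\tfrac12 u_0^2u$ and $u_1u_2\,k(u_0,u)=u_0u_1u_2$, which is exactly \eqref{primary-fjrw-potential}.

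Finally, comparing with $\mathcal{F}^{\E_d}_0=\tfrac12 t_0^2t+t_0t_1t_2$ shows that $\Psi$ intertwines the two genus-zero potentials and pairings, so the induced Frobenius (and in particular ring) structures agree, completing the proof. The one point that requires genuine care — and where Francis's original argument instead invokes the WDVV equations — is the a priori danger posed by the marginal class $\phi$, whose dual coordinate $u$ has degree zero and could in principle enter to all orders. The resolution here is structural rather than computational: the grading pins every tower of $\phi$-insertions to a companion identity insertion (via $2p+r=4$), after which the string equation removes it, while the remaining four-$\mathfrak{b}$ correlators are killed automatically by the nilpotency of the odd coordinates. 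I expect this marginal direction to be the only subtle part; the rest is bookkeeping with the grading and the string equation.
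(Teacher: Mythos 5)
Your proposal is correct and follows essentially the same route as the paper's proof: the three ingredients — the $\mathbb{Z}_2$-grading axiom \eqref{Z2-grading} (which kills any term with a repeated odd insertion, disposing of the four-$\mathfrak{b}$ correlators $C_i$), the dimension constraint \eqref{lg-degree} in the form $2p+r=4$, and the string equation from \eqref{string} — are exactly those used in the paper, merely applied in a slightly different order. The only cosmetic difference is that you organize the argument around the shape of the potential in the odd coordinates $u_1,u_2$ rather than around the individual correlators $C_i(s)$, which is an equivalent bookkeeping device.
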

\begin{proof}
It is easy to see $\Psi$ preserves the degree and grading.
To show $\Psi$ is a ring isomorphism, it is enough to prove \eqref{primary-fjrw-potential}.
%The classical invariants are determined by the pairing.
The compatibility condition \eqref{string} implies the String Equation in FJRW theory. Combining the degree constraints \eqref{element-degree} and \eqref{lg-degree}, we find that the
 quantum corrections are encoded in $C_i(s)$,
where $C_i(s)$ is the correlation function with $i$ copies of $\mathfrak{b}_1$-insertions and $(4-i)$ copies of $\mathfrak{b}_2$-insertions. For example,
\begin{equation*}
C_0(s)=\LL  \mathfrak{b}_1,\mathfrak{b}_1, \mathfrak{b}_1,\mathfrak{b}_1 \RR_{0,4}^{W_d}, \quad C_3(s)=\LL  \mathfrak{b}_1,\mathfrak{b}_2, \mathfrak{b}_2,\mathfrak{b}_2 \RR_{0,4}^{W_d}\,.
\end{equation*}
The $\mathbb{Z}_2$-grading \eqref{Z2-grading} shows $C_i(s)=0$ because for
$\alpha=\mathfrak{b}_1$ or $\mathfrak{b}_2$
\begin{equation*}
\LL\alpha,\alpha,\cdots\RR_{g,n}^{W_d}=(-1)^{|\alpha|\cdot |\alpha|}\LL\alpha,\alpha,\cdots\RR_{g,n}^{W_d}=-\LL\alpha,\alpha,\cdots\RR_{g,n}^{W_d}\,.
\end{equation*}
This proves the claim.
\end{proof}

\subsection{Belorousski-Pandharipande's relation and $g$-reduction}

The tautological rings $RH(\overline{\mathcal{M}}_{g,n})$ of $\overline{\mathcal{M}}_{g,n}$ are defined (see \cite{Faber: 2005} for example) as the smallest system of subrings of $H^*({\overline{\mathcal{M}}_{g,n}})$ stable under push-forward and
pull-back by the gluing and forgetful morphisms.
Thus pulling back the tautological relations in $RH(\overline{\mathcal{M}}_{g,n})$ via the CohFT maps $\Lambda_{g,n}$ gives relations among quantum invariants.
We use this technique to prove Proposition \ref{main-lemma1} and Proposition \ref{main-lemma2}.

\subsubsection{Belorousski-Pandharipande's relation for a genus-one correlation function}
%\textcolor{red}{BP relation is symmetric but our CohFT is not symmetric. However, insert odd classes for unsymmetric graphs give vanishing result.}
The degree constraints \eqref{element-degree} and \eqref{lg-degree} show that the non-vanishing genus-one primary FJRW invariants could only come from the coefficients in $\LL\phi\RR^{W_d}_{1,1}(s)$.
We determine this series and the GW correlation function $\LL\omega\RR^{\E_d}_{1,1}(q)$ up to some initial values,
using the tautological relation found by Belorousski and Pandharipande  \cite[Theorem 1]{Belorousski:2000}.
The relation is a nontrivial rational equivalence among codimension-$2$ descendent stratum classes in $\overline{\M}_{2,3}$ shown in Figure \ref{B&P-relation} below. 
%\bigskip
%{\sl FIGURE}
%\bigskip

\begin{figure}[h]
\centering
\includegraphics[width=1\textwidth]{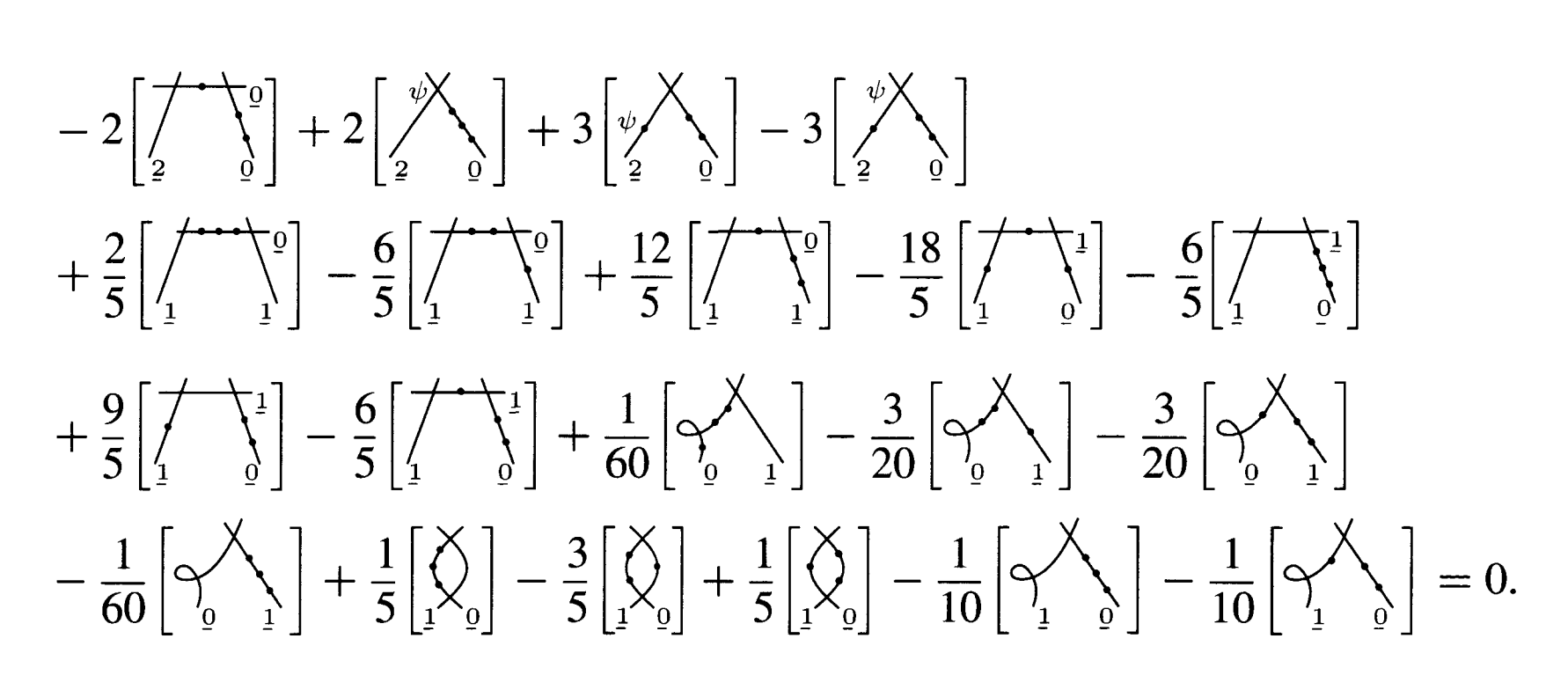}
\caption[Belorousski-Pandharipande relation]{Belorousski-Pandharipande relation. Taken from \cite[Theorem 1, formula (4)]{Belorousski:2000}.}
\label{B&P-relation}
\end{figure}

Each stratum in the relation is represented by the topological type of the stable curve corresponding to the generic moduli point in the stratum. The markings on the stratum are unassigned. The geometric genera of the components are underlined.  The cotangent line class $\psi$ always appears on the genus-$2$ component.

\begin{proof}[Proof of  Proposition \ref{main-lemma1}]
On the FJRW-side, we integrate 
$$\Lambda^{W_d}_{2,3}(\phi,\phi,\phi)\in H^4(\overline{\mathcal{M}}_{2,3})$$ 
over the Belorousski-Pandharipande relation. We read off one term from each stratum.\\

\noindent{\bf Strata in the 1st row of Figure \ref{B&P-relation}.}
Let us consider the 1st stratum in the 1st row. The integration over this stratum gives the term
\begin{equation*}
 -2\sum_{\alpha, \alpha', \beta, \beta'\in\mathscr{H}_{W_d}}\LL \alpha \RR_{2,1}^{W_d}(s) \eta^{\alpha, \alpha'} \LL \alpha', \phi , \beta \RR_{0,3}^{W_d}(s)\eta^{\beta, \beta'}\LL\beta',\phi, \phi \RR_{0,3}^{W_d}(s)\,.
 \end{equation*}
Here the notations
$\eta^{\alpha, \alpha'}$ stands for the $(\alpha,\alpha')$ component of the inverse of the paring $\eta$, etc.
For any homogeneous element $\alpha\in\mathscr{H}_{W_d}$, 
the degree constraint \eqref{lg-degree} implies that if $\LL \alpha \RR_{2,1}^{W_d}(s)$ is nonzero, then we must have
$$2(2-1)+1={\deg\alpha\over 2}\,.$$
This contradicts  \eqref{element-degree}, where we have $\deg\alpha=0, 1, 2$.
Thus we have that  $\LL \alpha \RR_{2,1}^{W_d}(s)=0$ and hence the contribution from this stratum is $0$.
Similar arguments imply that the contribution from all the strata in the 1st row of Figure \ref{B&P-relation} vanish, since the contribution from each stratum must contain one of the following terms as a factor
$$\LL \alpha \RR_{2,1}^{W_d}(s)=\LL \alpha\psi_1 \RR_{2,1}^{W_d}(s)=\LL\phi\psi_1, \alpha \RR_{2,2}^{W_d}(s)=\LL \phi, \alpha\psi_2 \RR_{2,2}^{W_d}(s)=0\,.$$\\

\noindent{\bf Other vanishing strata.}
Now we look at the 1st, 2nd, and 5th stratum in the 2nd row, 
the 3rd, 4th and 5th stratum in the 3rd row, and
the 2nd, 3rd, 5th, 6th stratum in the last row. Each stratum has a genus-zero component with at least $4$ markings (including the nodes). According to Proposition \ref{genus-zero-vanishing}, one has for the primary invariants
$$\LL\cdots\RR_{0,n}^{W_d}=0\,, \quad \forall \,n\geq 4\,.$$
Thus the integration of $\Lambda^{W_d}_{2,3}(\phi,\phi,\phi)\in H^4(\overline{\mathcal{M}}_{2,3})$ over each of these strata vanishes.

For the 1st and 2nd stratum in the 3rd row, the genus-zero component only contains $3$ markings, but at least $2$ of the markings are labeled with the class $\phi$. 
Again by Proposition \ref{genus-zero-vanishing}, we have 
$$\LL\phi, \phi, \alpha\RR_{0,3}^{W_d}=0\,, \quad \forall \,\alpha\in\mathscr{H}_{W_d}\,.$$
So the contribution from these two strata also vanish.

Finally, the integration on the 1st stratum in the 4th row also vanishes. 
This is a consequence of the $\mathbb{Z}_2$-grading.
In fact, we apply the degree constraint \eqref{lg-degree} to the genus-one component and find that the non-vanishing contribution from this stratum, if exists, should be of the form
$$-{1\over 60}\sum_{\alpha, \alpha'}\LL\phi, \phi, \phi, \phi\RR_{1,4}^{W_d}(s)\eta^{\phi, \one}\LL\one, \alpha, \alpha'\RR_{0,3}^{W_d}\eta^{\alpha', \alpha}.$$
The vanishing of this term is a direct consequence of the formula \eqref{pairing-with-sign}, where
$$ \eta^{\phi, \one}=\eta^{\mathfrak{b}_1, \mathfrak{b}_2}=1, \quad\eta^{\mathfrak{b}_2, \mathfrak{b}_1}=-1.$$\\

\noindent{\bf Non-vanishing terms.}
Now we see that all the possibly non-vanishing terms are from the  3rd and 4th stratum in the 2nd row, and the 4th stratum in the last row. 
Let us calculate them term by term. 
The 3rd stratum of the 2nd row gives a possibly non-vanishing term
\begin{equation*}
{12\over 5} \LL \phi \RR_{1,1}^{W_d}(s) \eta^{\phi, \one}\LL \one, \phi , \one \RR_{0,3}^{W_d}(s)\eta^{\one, \phi} \LL \phi, \phi, \phi \RR_{1,3}^{W_d}(s)
={12\over 5}g\cdot g''.
\end{equation*}
The 4th stratum of the 2nd row gives a possibly non-vanishing term
\begin{equation*}
-{18\over 5} \LL \phi, \phi\RR_{1,2}^{W_d}(s) \eta^{\phi, \one}\LL \one, \phi , \one \RR_{0,3}^{W_d}(s)\eta^{\one, \phi} \LL \phi, \phi \RR_{1,2}^{W_d}(s)
=-{18\over 5}g'\cdot g'.
\end{equation*}
The 4th stratum of the last row gives a possibly non-vanishing term
\begin{equation*}
{1\over 5}\cdot {1\over 2}\cdot \LL \one, \phi, \one\RR_{0,3}^{W_d}(s) \eta^{\one, \phi}\LL \phi, \phi, \phi , \phi \RR_{1,4}^{W_d}(s)\eta^{\phi, \one}
={1\over 5}\cdot{g'''\over 2}.
\end{equation*}
Here the  denominator $2$ in the term above comes from the automorphism of the graph.\\

Putting all these together, we see the Belorousski-Pandharipande relation in Figure \ref{B&P-relation} allows us to verify by brute-force computation that the correlation function
$g:=\LL \phi \RR_{1,1}^{W_d}(s)$
is a solution to
\begin{equation}
\label{BP-relation}
{12\over 5}g\cdot g''-{18\over 5}g'\cdot g'+{1\over 5}\cdot {g'''\over{2}}=0\,.
\end{equation}
Thus $-24 \LL \phi \RR_{1,1}^{W_d}(s)$ is a solution of the Chazy equation \eqref{chazy}.

Similarly, by integrating the GW cycle $\Lambda_{2,3}^{\E_d}(\omega,\omega,\omega)$ over
the Belorousski-Pandharipande relation in Figure \ref{B&P-relation}, we see that
 $-24 \LL \phi \RR_{1,1}^{\E_d}(q)$ is a solution of the Chazy
 equation \eqref{chazy}. This completes the proof of Proposition \ref{main-lemma1}.
\end{proof}

The identity \eqref{BP-relation} is independent of the specific form $\E_d$,
as should be the case since the GW invariants are independent of the choice of complex structures put on the elliptic curve.
\begin{rem}
For the elliptic orbifold curve $\cX_{N}:=\mathcal{E}^{(N)}/\mu_{N}$ for some particular elliptic curve $\mathcal{E}^{(N)}$ that admits $\mu_{N}$
as its automorphism group, the first stratum in the fourth line does not vanish. Let $\mu$ be the rank of the Chen-Ruan cohomology $H^*_{\rm CR}(\mathcal{X}_N)$
which satisfies
\begin{equation*}
1-{\mu\over 12}={1\over N}\,.
\end{equation*}
Define similarly $g=\LL \mathcal{P}\RR_{1,1}^{\mathcal{X}_{N}}$ where $\mathcal{P}$ is the point class on $\cX_{N}$. 
The Belorousski-Pandharipande relation now gives
\begin{equation*}
{12\over 5}g\cdot g''-{18\over 5}(g')^2+\left(-{\mu\over60}+{1\over 5}\right){g'''\over 2}=0\,,
\end{equation*}
where $'=Q \partial_{Q}$ is now the derivative with respect to the parameter for the point class $\mathcal{P}$. 
Then $f=-24g$ satisfies
\begin{equation*}
2f\cdot f''-3(f')^2-2\left(1-{\mu\over 12}\right)f'''=0\,.
\end{equation*}
Its solutions coincide with the ones to \eqref{BP-relation} via the relation $Q=q^N$,
see \cite{Shen:2017} for more details.

\end{rem}

\subsubsection{g-reduction for higher-genus correlation functions}

Now we prove Proposition \ref{main-lemma2} using the \emph{$g$-reduction technique} introduced in \cite{Faber: 2010}. We recall the following result. % According to \cite{Ionel: 2002} and \cite{Faber: 2005},
\begin{lem}\label{g-reduction}
\cite{Ionel: 2002, Faber: 2005}
Let $M(\psi, \kappa)$ be a monomial of $\psi$-classes and $\kappa$-classes $\overline{\mathcal{M}}_{g,n}$.
Assume $\deg M\geq g$ when $g\geq1$, and $\deg M\geq1$ when $g=0$, then $M(\psi, \kappa)$ is equal to a linear combination
of dual graphs on the boundary of $\overline{\mathcal{M}}_{g,n}$.
\end{lem}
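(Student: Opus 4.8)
The plan is to deduce the statement from the vanishing of high-degree tautological monomials on the \emph{open} moduli space $\mathcal{M}_{g,n}$ of smooth curves, combined with the standard description of the kernel of the restriction map to the interior. I would carry this out in three steps, the middle one being the genuinely hard input that is quoted from \cite{Ionel: 2002, Faber: 2005}.

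First I would eliminate the $\kappa$-classes in favour of $\psi$-classes. On the forgetful tower $\pi\colon \overline{\mathcal{M}}_{g,n+m}\to\overline{\mathcal{M}}_{g,n}$ one has $\kappa_a=\pi_*(\psi_{n+1}^{a+1})$, and more generally (Arbarello--Cornalba / Kaufmann--Manin--Zagier) every monomial in the $\kappa$-classes is a $\pi_*$ of a sum of pure $\psi$-monomials. Combining this with the comparison $\pi^*\psi_i=\psi_i-[\text{boundary}]$ and the projection formula, one sees that, modulo classes manifestly supported on the boundary, $M(\psi,\kappa)$ equals $\pi_*\widetilde M$ for a pure $\psi$-monomial $\widetilde M$ on some $\overline{\mathcal{M}}_{g,n+m}$ with $n+m\geq1$. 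Since $\pi_*$ lowers codimension by the relative dimension, $\deg\widetilde M=\deg M+m\geq g$, and since $\pi_*$ sends boundary-supported classes to boundary-supported classes, it suffices to prove the statement for pure $\psi$-monomials of degree $\geq g$ carrying at least one marking.

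Second, and this is the heart of the matter, I would invoke the Getzler--Ionel vanishing theorem \cite{Ionel: 2002}: for $g\geq1$ and $n\geq1$, any monomial in the $\psi$-classes of degree $\geq g$ restricts to $0$ in $H^*(\mathcal{M}_{g,n})$. (For $g=0$ the analogous statement with $\deg\geq1$ is elementary, since every $\psi_i$ is a sum of boundary divisors and hence restricts to $0$ on $\mathcal{M}_{0,n}$.) This interior vanishing is the main obstacle and the only nonformal ingredient; its proof analyses jets of sections of the relative dualizing sheaf along the universal curve (Ionel), or equivalently pushes forward virtual classes from moduli of relative stable maps to a chain of $\mathbb{P}^1$'s (Faber--Pandharipande), and it is exactly this analysis that forces the numerical bound $g$.

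Finally I would convert interior vanishing into a boundary expression. The excision exact sequence, together with the description of the tautological ring of $\overline{\mathcal{M}}_{g,n}$ by decorated stable graphs (Graber--Pandharipande), shows that the kernel of the restriction $R^*(\overline{\mathcal{M}}_{g,n})\to R^*(\mathcal{M}_{g,n})$ is spanned by pushforwards from the boundary divisors, that is, by dual graphs carrying at least one edge and decorated by $\psi$- and $\kappa$-monomials at their vertices. Since $\widetilde M$ lies in this kernel by the second step, it is a $\mathbb{Q}$-linear combination of such decorated boundary strata; applying $\pi_*$ and folding in the boundary corrections produced in the first step yields the desired expression for $M(\psi,\kappa)$ on $\overline{\mathcal{M}}_{g,n}$, completing the $g$-reduction.
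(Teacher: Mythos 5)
The paper does not in fact prove this lemma: it is quoted as an external result from Ionel and Faber--Pandharipande, so your attempt has to be measured against the proofs in those cited papers. Your first two steps identify the right ingredients (elimination of $\kappa$-classes via forgetful maps, and a Getzler--Ionel type vanishing as the nonformal input), but your third step contains a genuine gap, and it sits exactly where the content of the cited theorems lies.

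The gap is this: interior vanishing plus excision does \emph{not} formally yield an expression as a linear combination of decorated boundary dual graphs. Excision (or the localization sequence) gives exactness of $A_*(\partial\overline{\mathcal{M}}_{g,n})\to A_*(\overline{\mathcal{M}}_{g,n})\to A_*(\mathcal{M}_{g,n})\to 0$ for the \emph{full} Chow groups (similarly in cohomology), so a tautological monomial vanishing on the interior is $\iota_*\beta$ for \emph{some} class $\beta$ supported on the boundary --- but nothing forces $\beta$ to be tautological, and $H^*(\overline{\mathcal{M}}_{g',n'})$ and $A^*(\overline{\mathcal{M}}_{g',n'})$ contain many non-tautological classes. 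The statement you attribute to Graber--Pandharipande, that the kernel of the restriction $R^*(\overline{\mathcal{M}}_{g,n})\to R^*(\mathcal{M}_{g,n})$ is spanned by boundary pushforwards of \emph{tautological} classes, is not a consequence of their description of $R^*$ by decorated graphs: their results give that boundary pushforwards of tautological classes lie in this kernel, not the converse. This middle-exactness of the tautological restriction sequence is a well-known delicate point (it is not known in general), and it is precisely why Ionel and Faber--Pandharipande prove the boundary expression \emph{directly} --- Ionel by an explicit geometric argument in cohomology, Faber--Pandharipande by pushing forward relations from moduli of relative stable maps to $\mathbb{P}^1$, which also upgrades the statement from cohomology to Chow. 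In other words, your architecture takes as a black box (interior vanishing) something strictly weaker than the theorem being proved, and treats as formal bookkeeping what is actually the theorem's content.

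A secondary, fixable inaccuracy is in your first step: pushforward along the forgetful map $\pi$ does not preserve ``supported on the boundary.'' The divisor $D_{i,n+1}$ where the forgotten point collides with the $i$-th marking dominates $\overline{\mathcal{M}}_{g,n}$ (its generic point stabilizes to a smooth curve), so $\pi_*$ of a class supported on it need not be a boundary class downstairs. The standard $\kappa$-to-$\psi$ comparison instead tracks these correction terms as $\psi$-$\kappa$ monomials of the same total degree and closes the argument by induction; this is routine, but it is not the one-line argument you give.
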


\begin{proof}[Proof of Proposition \ref{main-lemma2}]
%Both GW theory of the cubic elliptic curve and FJRW theory of $(W,G)$ forms Cohomological Field Theories, if we enlarge the $S_n$-invariance into $\mathbb{Z}_2$-graded $S_n$-invariance.
%The strategy given in the proof of such a theorem for elliptic orbifold curve cases still work here.

Consider the GW or FJRW correlation function of the form%(either GW of elliptic curves or FJRW of its Fermat realization)
\begin{equation*}
\LL\alpha_1\psi_1^{\ell_1},\cdots,\alpha_n\psi_n^{\ell_n}\RR_{g,n}^\clubsuit \, ,  \quad  \clubsuit=\E_d\ \text{or}\ W_d\,.
\end{equation*}
Using that the cohomology classes have $0\leq \deg\alpha_k\leq2$, and using \eqref{gw-degree} and \eqref{lg-degree},
we deduce that the correlation function is trivial if
\begin{equation*}
\sum_{k=1}^n  \ell_k<2g-2\,.
\end{equation*}
Now we assume it is nontrivial and $\sum_{k=1}^n \ell_k\ge 1$, then we must have
\begin{equation}\label{g-nonvanishing}
\deg\left(\prod_{k=1}^{n}\psi_k^{\ell_k}\right)=
\sum_{k=1}^n \ell_k\geq
\left\{
\begin{array}{ll}
2g-2\geq g,& g\geq2\,,\\
1,& g=0,\, 1\,.\\%1,& g=0\,,
\end{array}
\right.
\end{equation}
Then $\prod_{k=1}^{n}\psi_k^{\ell_k}$ is a monomial satisfying the condition in Lemma \ref{g-reduction}, thus we can apply this technique and use
the Splitting Axiom in GW/FJRW theory to rewrite the function as a linear combination of products of other correlation functions, with smaller genera.

We then repeat the process for nontrivial correlation functions with smaller genera and eventually rewrite  the correlation function as a linear combination of products of primary (all $\ell_k=0$) correlation functions in genus-zero
(which are just constants) and in genus-one, which must be $f_d^{(n-1)}=\LL\omega, \cdots, \omega\RR_{1,n}^{\E_d}$ or $\LL\phi, \cdots, \phi\RR_{1,n}^{W_d}$. Thus we have
$$\LL\alpha_1\psi_1^{\ell_1},\cdots,\alpha_k\psi_n^{\ell_n}\RR_{g,n}^{\clubsuit}\in \C\left[f_d, f_d', f_d'', \cdots\right]= \C\left[f_d, f_d', f_d''\right].$$
The last equality follows from \eqref{BP-relation}.
\end{proof}

\section{A genus-one FJRW invariant}
\label{secgenusoneinitialvalues}

%\textcolor{red}{It is well known that  $\LL \omega \RR_{1,1}^{\E_d}(q)=-E_2(q)/24$. Now we can give a new proof by calculating the first three invariants only.}

Throughout this section, we consider the $d=3$ case, with
$W_3=x_1^3+x_2^3+x_3^3$ and $G=\mu_3$. We focus on the following genus-one FJRW invariant (see \eqref{FJRW-function}) with $n=3$

\iffalse
%%%
By compatibility,  we have the following vanishing invariants
\begin{equation}
\langle \phi \rangle^{W_{3}}_{1,1}=\langle \phi,\phi\rangle^{W_{3}}_{1,2}=0\,.
\end{equation}
Hence the first nontrivial invariant in the correlation function $\LL \phi\RR^{W_{3}}_{1,1}$
is $\langle \phi,\phi,\phi\rangle^{W_{3}}_{1,3}$.
%%%
\fi

\begin{equation*}
\Theta_{1,n}:=\LD \underbrace{\phi, \cdots, \phi}_{n}\RD_{1,n}^{(W_3, \mu_3 )}.
\end{equation*}
%The goal in this section is to prove

Combining the computations in \cite{LLSZ}, we will prove
\begin{prop}\label{propinitialinvariant}
\cite[Theorem 1.1]{LLSZ}
\label{propinitialFJRWinvariant}
For the $(W_3, \mu_3 )$ case,  one has the following
FJRW invariant 
\begin{equation}
\label{invariant}
\Theta_{1,3}=\LD \phi,\phi,\phi\RD^{(W_3, \mu_3 )}_{1,3}={1\over 108}\,.
\end{equation}
\end{prop}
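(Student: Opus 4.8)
The plan is to realize $\Theta_{1,3}$ as a concrete intersection number on a moduli of spin curves and to evaluate it by cosection localization, following the strategy of \cite{LLSZ}. By the degree constraint \eqref{lg-degree}, with $g=1$, $n=3$ and $\deg\phi=2$, one has $2g-2+n=3=\sum_{k=1}^{3}\deg\phi/2$, so all $\ell_k=0$ and $\Theta_{1,3}=\int_{\overline{\mathcal{M}}_{1,3}}\Lambda^{W_3}_{1,3}(\phi,\phi,\phi)$ is the integral of a top-degree class. Since $W_3=x_1^3+x_2^3+x_3^3$ is Fermat and $G=\mu_3=\langle J_\delta\rangle$ is the minimal admissible group, a $W_3$-spin structure is a single line bundle $L$ on the universal curve $\mathcal{C}$ with $L^{\otimes 3}\cong\omega_{\mathcal{C}/\overline{\mathcal{M}}_{1,3}}^{\log}$ (twisted by the narrow boundary data of the three $\phi$-insertions), and the three fields $x_1,x_2,x_3$ are all sections of this same $L$. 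Thus the computation takes place over the moduli $\mathfrak{M}_{1,3}^{1/3}$ of $3$-spin curves, a finite (orbifold) cover of $\overline{\mathcal{M}}_{1,3}$.

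Next I would invoke the Polishchuk-Vaintrob / Chang-Li-Li construction of the FJRW virtual cycle for the homogeneous polynomial $W_3$. Because all three insertions are narrow, generically $\pi_*L=0$, and the obstruction sheaf is $(R^1\pi_*L)^{\oplus 3}$; the polynomial $W_3$ induces a cosection of the obstruction theory whose degeneracy locus is proper. By Kiem-Li cosection localization the virtual cycle is then represented by a localized Euler class, which I would express through the Chern classes of $R\pi_*L$. Applying Chiodo's Grothendieck-Riemann-Roch formula gives $\mathrm{ch}(R\pi_*L)$ in terms of $\psi$-, $\kappa$-, and boundary classes on $\mathfrak{M}_{1,3}^{1/3}$; pushing forward to $\overline{\mathcal{M}}_{1,3}$ (and dividing by the degree of the spin cover) reduces $\Theta_{1,3}$ to a tautological integral that can be evaluated explicitly on $\overline{\mathcal{M}}_{1,3}$.

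The final, and most delicate, step is the bookkeeping that produces the exact value $1/108$. One must track the $1/|\mu_3|$ automorphism factor of the spin cover, the normalization of $\phi$ coming from the residue pairing (recall the coefficient $1/27$ hidden in $h(W_3)=x_1x_2x_3/27$), and the signs and orientation conventions. I expect the main obstacle to be the genus-one subtlety of the cosection localization: in genus one $R^1\pi_*L$ couples to the nontrivial Hodge class $\lambda_1$, and the boundary strata of $\mathfrak{M}_{1,3}^{1/3}$ where the spin structure degenerates contribute to the localized class, so the reduction to a clean tautological integral requires care. This is precisely the technical heart carried out in \cite{LLSZ}; assembling all the normalization constants with the tautological integral then yields $\Theta_{1,3}=1/108$.
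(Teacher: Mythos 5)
Your overall strategy --- realizing $\Theta_{1,3}$ as a cosection-localized intersection number on the moduli of $3$-spin curves, computing via Chiodo's formula, and deferring the final evaluation to \cite{LLSZ} --- is the same route the paper takes, but the two steps that constitute the paper's actual argument in this section are missing from your outline, and the step you do describe is stated in a form that would fail if taken literally. First, the paper does not work with the rank-three obstruction $(R^1\pi_*\sL)^{\oplus 3}$ directly: Lemma \ref{cube} shows that the $W_3$-cycle factors as the \emph{cube} of the single $3$-spin localized cycle, $[\Mbar_{1,2^3}(W_3,\mu_3)^p]\virt=\bl[\Mbar_{1,2^3}^{1/3,p}]\virt\br^3$, proved by combining the tensor-product property \cite[Thm 4.11]{CLL} with a diagonal pull-back argument and the smoothness of $\Mbar_{1,2^3}^{1/3}$. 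This reduction of a rank-three problem to the triple self-intersection of a single divisor class is what makes the computation tractable; your proposal never isolates it.

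Second, your claim that Kiem--Li localization represents the cycle as ``a localized Euler class, which I would express through the Chern classes of $R\pi_*L$'' sits exactly on the subtle point, and it is not correct as stated: because $R^0\pi_*\sL$ jumps along the divisor $\cD$ (curves splitting into a genus-one component with $h^0(\sL|_\sE)=1$ and a $4$-pointed rational component), the localized cycle is \emph{not} determined by $c_1(R^{\bullet}\pi_*\sL)$ alone. The paper's Proposition \ref{prep1} and Corollary \ref{cor3-spin-class} give $\bigl[\overline{\mathcal{M}}^{1/3,p}_{1,2^{3}}\bigr]\virt=-c_{1}(R^{\bullet}\pi_{*}\sL)-3[\cD]$, where the coefficient $3=e+1$ comes from the fiberwise homogeneity degree $e=2$ of the cosection --- data invisible to $R\pi_*\sL$ and to Chiodo's GRR formula. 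You do acknowledge that the degenerate locus ``requires care,'' but identifying this correction term and its coefficient is the mathematical content of the proof here, not bookkeeping that can be pushed into \cite{LLSZ} (the paper defers to \cite{LLSZ} only the final intersection-theoretic evaluation of the cube of this explicit class). A minor further point: your normalization remark about the factor $1/27$ in $h(W_3)$ is off target, since $\phi$ is the narrow-sector generator normalized by the pairing with $\one$; the element $h(W_3)\Omega$ is the broad (odd) class $\mathfrak{b}_1$, which plays no role in $\Theta_{1,3}$.
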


We first obtain a formula that express the Witten's top Chern class for $\Theta_{1,3}$ in terms of a Witten's top Chern class of three spin curves in Lemma \ref{cube}. 
Then  in Proposition \ref{prep1} and Corollary \ref{cor3-spin-class}, we analyze the later virtual class explicitly  by cosection localization. 
Finally, Proposition \ref{propinitialinvariant} will be deduced from these results and explicit computations in \cite{LLSZ}.

\subsection{Witten's top Chern class}
We begin with a formula for a Witten's top Chern class of the moduli of three-spin curves. 
The relevant moduli $\Mbar_{g=1,2^3}(W_3, \mu_3)$ (defined in \cite{CLL}) is the moduli of families 
\beq\label{xi}
\xi=[\Sigma\sub\sC,(\sL_i,\rho_i)_{i=1}^3]
\eeq
such that $\Sigma\sub\sC$ is a family of genus-one $3$-pointed twisted nodal curves,
each marking is a stacky point of automorphism group $\mu_3$,
$\rho_i:\sL_i^{\otimes 3}\cong\omega_\sC^{\log}$ are isomorphisms together with isomorphisms $\sL_i\cong \sL_1$ 
for $i=2$ and $3$ understood, the monodromy of $\sL_1$ along $\Sigma_i\sub\Sigma$ is 
$\frac{3-1}{3}$.
\footnote{Our convention is that
for $\sC=[\mathbb{A}^1/\mu_r]$ and an invertible sheaf of $\sO_\sC$-modules having monodromy 
$\frac{a}{r}\in [0,1)$ at $[0]$, then locally the sheaf takes the form $\sO_{\mathbb{A}^1}(a[0])/\mu_r$.}
Because of the isomorphisms $\sL_i\cong \sL_1$, we have canonical isomorphism
$$\cW_3\defeq \Mbar_{1,2^3}^{1/3}\cong 
\Mbar_{1,2^3}(W_3, \mu_3).
$$
where recall that $\cW_3$ parameterizes families of 
$\xi=[\Sigma\sub\sC,\sL,\rho]$ with objects $\Sigma$, $\sC$, $\sL$ and $\rho$ as before.

%As commented, we will work with the moduli space $\cW_3$. 

Let 
$$[\Mbar_{1,2^3}(W_3, \mu_3)^p]\virt\in A\lsta \Mbar_{1,2^3}(W_3, \mu_3)
$$ 
be the FJRW invariant of the pair $(W_3, \mu_3)$,
which is defined in \cite{CLL} as the cosection localized virtual cycles of the moduli stack
$\Mbar_{1,2^3}(W_3, \mu_3)^p$, parameterizing 
$$\xi=\{(\sC,\Sigma, \sL_1,\cdots,\varphi_1,\varphi_2,\varphi_3): (\sC,\Sigma, \sL_1,\cdots)\in \Mbar_{1,2^3}(W_3, \mu_3);
\varphi_i\in \Gamma(\sL_i)\}.
$$
As shown in \cite{CLL}, it has a cosection localized virtual cycle, denoted by $[\Mbar_{1,2^3}(W_3, \mu_3)^p]\virt$.
%which we defined to be the genus one 3-point FJRW invariant of $(W_3,\mu_3)$. 
We let 
$$[\Mbar_{1,2^3}^{1/3,p}]\virt\in A\lsta \Mbar_{1,2^3}^{1/3}
$$ 
be the similarly defined its cosection localized virtual cycle. 

\begin{lem}\label{cube}
%(. Let $[\Mbar_{1,2^3}(x^3,\mu_3)]\virt$
%be the Witten's top Chern class of the three-spin curves, which is $[\Mbar_{1,2^3}^{1/3,p}]\virt\in A^1 \cW_3$ by \cite{CLL}.
%Using the convention $\Mbar_{1,2^3}(x^3,\mu_3)^p=\Mbar_{1,2^3}^{1/3,p}$, 
We have identity
\beq\label{W3}
[\Mbar_{1,2^3}(W_3, \mu_3)^p]\virt=\bl [\Mbar_{1,2^3}^{1/3,p}]\virt \br^3\in A^3\cW_3\equiv A_0\cW_3\,.
\eeq
\end{lem}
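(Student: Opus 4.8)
The plan is to realize both sides as cosection-localized Euler classes over the smooth base $\cW_3=\Mbar_{1,2^3}^{1/3}$ and to deduce \eqref{W3} from the multiplicativity of such classes under the Thom--Sebastiani splitting of $W_3=x_1^3+x_2^3+x_3^3$. First I would recall from \cite{CLL} that, since $\cW_3$ is smooth of dimension $3$ and the three spin bundles are identified, $\sL_i\cong\sL_1=:\sL$, the obstruction theory of $\Mbar_{1,2^3}(W_3,\mu_3)^p$ relative to $\cW_3$ is governed by
\begin{equation*}
\bE=\bigoplus_{i=1}^3 R^1\pi_*\sL_i\cong\bl R^1\pi_*\sL\br^{\oplus 3},
\end{equation*}
the fields $\varphi_i$ living in the generically vanishing $R^0\pi_*\sL_i$. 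Here $R^1\pi_*\sL$ has rank $1$ and $R^0\pi_*\sL=0$ generically by Riemann--Roch on the orbifold curve, so the virtual dimension is $3+0-3=0$ for the full moduli and $3+0-1=2$ for the single-spin moduli, matching $[\,\cdot\,]\virt\in A^3\cW_3$, resp. $A^1\cW_3$. The cosection localized virtual cycle is then the localized Euler class $e_{\loc}(\bE,\sigma)$, where $\sigma$ is the cosection of $\bE$ built from $dW_3$.

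Next I would exhibit the splitting of this cosection. Because $W_3$ is Fermat, hence a Thom--Sebastiani sum with no mixed monomials, the differentials are $\partial_iW_3=3x_i^2$ and the cosection $\sigma\colon\bE\to\sO_{\cW_3}$ of \cite{CLL} is block diagonal,
\begin{equation*}
\sigma=\sigma_1\oplus\sigma_2\oplus\sigma_3,\qquad \sigma_i\colon R^1\pi_*\sL\to\sO_{\cW_3},
\end{equation*}
each $\sigma_i$ depending only on the $i$-th field $\varphi_i$ through $3\varphi_i^2$ and coinciding, under $\sL_i\cong\sL$, with the cosection of the single $1/3$-spin ($A_2$) theory whose localized Euler class is $[\Mbar_{1,2^3}^{1/3,p}]\virt$. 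The locus on which $\sigma$ fails to be surjective is the common vanishing $\{\varphi_1=\varphi_2=\varphi_3=0\}$, i.e.\ the zero section $\cW_3\hookrightarrow\Mbar_{1,2^3}(W_3,\mu_3)^p$, which is why both localized cycles descend to $A\lsta\cW_3$.

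Finally I would invoke the multiplicativity of cosection-localized Euler classes for a direct sum of bundles carrying a direct-sum cosection over a smooth base: iterating the cosection-localized Gysin map yields
\begin{equation*}
e_{\loc}(\bE,\sigma)=\prod_{i=1}^3 e_{\loc}\bl R^1\pi_*\sL,\sigma_i\br,
\end{equation*}
the product being the intersection product in the Chow ring of the smooth stack $\cW_3$. Since the three factors are identified through $\sL_i\cong\sL$, each equals $[\Mbar_{1,2^3}^{1/3,p}]\virt\in A^1\cW_3$, and the right-hand side is its cube in $A^3\cW_3\equiv A_0\cW_3$, which is exactly \eqref{W3}. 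I expect the main obstacle to be this last step: one must justify the product formula at the level of Chow cycles, not merely numerically, checking that the iterated localized Gysin maps are compatible with the intersection product on $\cW_3$ and that the obstruction theory genuinely decomposes as a direct sum with no cross-terms. The latter is exactly where the hypotheses that $W_3$ is diagonal (no mixed monomials $x_ix_j$) and that all three bundles are isomorphic to a single $\sL$ enter in an essential way.
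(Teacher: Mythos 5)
Your structural picture is the right one --- because $W_3$ has no mixed monomials the obstruction theory splits into three identical summands, the cosection splits accordingly, and its degeneracy locus is the zero section $\cW_3$ --- but the step you lean on at the end is a genuine gap, not a technicality. The ``multiplicativity of cosection-localized Euler classes for a direct sum of bundles carrying a direct-sum cosection over a smooth base,'' obtained by ``iterating the cosection-localized Gysin map,'' is not a theorem you can cite, and you give no argument for it; as you note yourself, everything hinges on it. Worse, specialized to the present situation that principle \emph{is} the identity \eqref{W3} being proved, so invoking it is circular. What the literature actually provides (\cite[Thm 4.11]{CLL}) is the exterior-product (Thom--Sebastiani) statement: for $x^3+y^3$ with the \emph{product} group $(\mu_3)^2$, where the two spin bundles are independent, the localized virtual cycle is the Gysin pullback, along $f:\Mbar_{1,2^3}(x^3+y^3,(\mu_3)^2)\to\Mbar_{1,2^3}(x^3,\mu_3)\times\Mbar_{1,2^3}(x^3,\mu_3)$, of the exterior product of the two one-variable cycles. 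That is a different moduli problem from the one you are computing: in $\Mbar_{1,2^3}(W_3,\mu_3)$ the three spin bundles are identified, i.e.\ the group is the diagonal $\mu_3$, not $(\mu_3)^3$, and your ``direct sum over the common base $\cW_3$'' is precisely the diagonal theory.

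The missing bridge, which is exactly how the paper argues, is to pass between the two via the diagonal: the morphism $g:\Mbar_{1,2^3}(x^3+y^3,\mu_3)=\Mbar_{1,2^3}(x^3,\mu_3)\to\Mbar_{1,2^3}(x^3+y^3,(\mu_3)^2)$ is \'etale and proper, so cosection-localized virtual cycles pull back along it; the composite $f\circ g$ is the diagonal $\cW_3\to\cW_3\times\cW_3$, and smoothness of $\cW_3$ identifies the diagonal Gysin pullback of an exterior product of cycles with the intersection product in $A^*\cW_3$. Applying this once gives the square for $x^3+y^3$, and repeating it gives the cube for $W_3$. If you insist on your direct formulation, you must prove the compatibility of iterated localized Gysin maps with the intersection product yourself, which amounts to redoing this diagonal argument. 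A secondary imprecision: $R^0\pi_*\sL$ and $R^1\pi_*\sL$ are not bundles (their ranks jump along the divisor $\cD$ where $h^0(\sL)\neq 0$), so ``$e_{\mathrm{loc}}(\bE,\sigma)$'' must be formulated via a two-term complex of locally free sheaves resolving $R^\bullet\pi_*\sL$, as the paper does; this is not cosmetic, since the geometry over $\cD$ is exactly what makes the one-spin class $[\Mbar_{1,2^3}^{1/3,p}]\virt=-c_1(R^\bullet\pi_*\sL)-3[\cD]$ differ from a naive Euler class.
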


%Applying \cite[Thm 4.11]{CLL}, we get that 
%$$[\Mbar_{1,2^3}(x^3+y^3,(\mu_3)^2)^p]\virt= (\pi_1,\pi_2)\sta \bl [\Mbar_{1,2^3}(x^3,\mu_3)^p]\virt\br^2,
%$$
%where $f$ is defined via the product of the two projections, in 
\begin{proof}
First we have the following Cartesian product
$$\begin{CD}
\Mbar_{1,2^3}(x^3, \mu_3)^p\times \Mbar_{1,2^3}(x^3,\mu_3)^p %@<<< \Mbar_{1,2^3}(x^3+y^3,(\mu_3)^2)^p
@<<< \Mbar_{1,2^3}(x^3+y^3,(\mu_3)^2)^p\\
@VVV @VVV\\ % @VVV\\
\Mbar_{1,2^3}(x^3,\mu_3)\times\Mbar_{1,2^3}(x^3,\mu_3) %@<{(\pi_1,\pi_2)}<< \Mbar_{1,2^3}(x^3+y^3,(\mu_3)^2)
@<{f}<< \Mbar_{1,2^3}(x^3+y^3,(\mu_3)^2),\\
%@VVV @VVV\\
%\Mbar_{1,2^3}\times \Mbar_{1,2^3} @<<< \Mbar_{1,2^3}.
\end{CD}
$$
where the morphism $f$ is defined via sending $(\sC,\Sigma,\sL_1,\sL_2)$ to 
$$\bl(\sC,\Sigma,\sL_1),(\sC,\Sigma,\sL_2)\br.$$
Applying \cite[Thm 4.11]{CLL}, we get that 
\beq[\Mbar_{1,2^3}(x^3+y^3,(\mu_3)^2)^p]\virt= f\sta \bl [\Mbar_{1,2^3}(x^3,\mu_3)^p]\virt\times  [\Mbar_{1,2^3}(x^3,\mu_3)^p]\virt\br.
\label{fsta}\eeq
Now let 
$$g:  \Mbar_{1,2^3}(x^3+y^3,\mu_3)=\Mbar_{1,2^3}(x^3,\mu_3)\lra  \Mbar_{1,2^3}(x^3+y^3,(\mu_3)^2)
$$
be the diagonal morphism, then 
$$f\circ g: \Mbar_{1,2^3}(x^3,\mu_3)\lra\Mbar_{1,2^3}(x^3,\mu_3)\times\Mbar_{1,2^3}(x^3,\mu_3)
$$
is the diagonal morphism. As $g$ is \'etale and proper, we conclude
\beq\label{gsta}
[\Mbar_{1,2^3}(x^3+y^3,\mu_3)^p]\virt= g\sta [\Mbar_{1,2^3}(x^3+y^3,(\mu_3)^2)^p]\virt.
\eeq
Combined with \eqref{fsta} and \eqref{gsta}, we obtain
%As all squares are Cartesian, by the property of  flat pullback, we obtain
$$[\Mbar_{1,2^3}(x^3+y^3,\mu_3)^p]\virt=(f\circ g)\sta \bl [\Mbar_{1,2^3}(x^3,\mu_3)^p]\virt\times  [\Mbar_{1,2^3}(x^3,\mu_3)^p]\virt\br,
$$
which is $\bl[\Mbar_{1,2^3}(x^3,\mu_3)^p]\virt\br^2$. Here we have used that $\Mbar_{1,2^3}(x^3,\mu_3)$ is smooth.
Repeating the same argument, go from $x^3+y^3$ to $W_3$, we prove the lemma.
\end{proof}

\subsubsection{Cosection localized virtual cycles}

Let $\cW$ be a smooth DM stack, with a complex of locally free sheaves of $\sO_\cW$-modules
\begin{equation}\label{1}
\cE^\bullet:=[\sO_\cW(E_0)\mapright{s} \sO_\cW(E_1)]\,,
\end{equation}
%where $E_i$ are locally free sheaves of $\sO_\cW$-modules, 
of rank $a_0$ and $a_1=a_0+1$, respectively. 
%The vanishing locus $s=0$ is a smooth divisor $D\sub \cW$. 
Let $\pi: E_0\to \cW$ be the projection; the section $s$ induces a section $\ti s\in \Gamma(\ti E_1)$ of the pullback bundle 
$\ti E_1:=\pi\sta E_1$. We define
\beq\label{M}
\cM\defeq (\ti s=0)\sub E_0\,.
\eeq

\smallskip
\noindent
{\bf Assumption-I}. {\sl We assume $\cD=({\rm ker}\ s\neq 0)\sub \cW$ is a smooth Cartier divisor; $\text{Im}(s|_\cD)$ is a rank
$a_0-1$ subbundle of $E_1|_\cD$.
}
\smallskip

Because $\cD$ is a smooth Cartier divisor, we can find a vector bundle $F$ on $\cW$ fitting into
\begin{equation}\label{2}
\sO_\cW(E_0) \mapright{\eta_1} \sO_\cW(F) \mapright{\eta_2} \sO_\cW(E_1)
\end{equation}
so that $\eta_1|_{\cW-\cD}=s|_{\cW-\cD}$ is an isomorphism,
$F\to E_1$ is a subvector bundle, and $s=\eta_2\circ\eta_1.$

We let $\cA=H^1(\cE^\bullet)$. By Assumption-I, it fits into the exact sequence
\beq\label{01}0\lra \sO_\cW(E_0)\mapright{\phi}\sO_\cW(F)\lra\cA\lra 0.
\eeq
% cokernel of $\sO_\cW(E_0)\to\sO_\cW(F)$. 
Further, there is a line bundle
$A$ on $\cD$ so that $\cA=\sO_\cD(A)$. 
In the following, we will view $c_1(A)\in A^1\cD$. Then for the inclusion $\iota:\cD\to\cW$, $\iota\lsta c_1(A)\in A^2\cW$.
%E_0|_{\cD}\to E_1|_{\cD}$, which we view as a sheaf of $\sC_D$-module. By Assumption
%A, $\cA_1$ is an invertible sheaf of $\sO_\cD$-modules.
Since $A$ is a line bundle on $\cD$, we have $c_1(\cA)=[\cD]$,
thus
\begin{lem}\label{lem1.1}
We have identity $c_1(E_1-F)=c_1(E_1- E_0)-[\cD]$.
%and $$c_2(E_1-F)=c_2(E_1-E_0)-c_1(E_1-E_0)\cdot[\cD]+\iota\lsta c_1(A).$$
\end{lem}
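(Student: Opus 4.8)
The plan is to read the identity directly off the short exact sequence \eqref{01}, so the proof should be essentially immediate. First I would observe that in \eqref{01} both $\sO_\cW(E_0)$ and $\sO_\cW(F)$ are locally free of the same rank $a_0$: indeed $\eta_1|_{\cW-\cD}$ is an isomorphism, so $F$ and $E_0$ agree generically in rank, and $\cA=H^1(\cE^\bullet)$ is a torsion sheaf. Thus \eqref{01} is a two-term locally free resolution of $\cA$. Since $\cW$ is smooth, every coherent sheaf has a finite locally free resolution and carries well-defined Chern classes that are additive along short exact sequences; applying the Whitney sum formula to \eqref{01} in degree one gives
\begin{equation*}
c_1(\sO_\cW(F)) = c_1(\sO_\cW(E_0)) + c_1(\cA),
\end{equation*}
that is, $c_1(F) = c_1(E_0) + c_1(\cA)$.

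Next I would invoke the identification $c_1(\cA)=[\cD]$ recorded just before the statement: because $\cA=\sO_\cD(A)=\iota_*\sO_\cD(A)$ is the pushforward of a line bundle from the smooth Cartier divisor $\cD$ (with $\iota:\cD\to\cW$ the inclusion), its first Chern class equals the divisor class $[\cD]$, independently of the choice of $A$ on $\cD$, since any dependence on $A$ enters only in codimension $\geq 2$. Substituting, I obtain $c_1(F)=c_1(E_0)+[\cD]$.

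Finally, rearranging this relation yields
\begin{equation*}
c_1(E_1-F) = c_1(E_1)-c_1(F) = c_1(E_1)-c_1(E_0)-[\cD] = c_1(E_1-E_0)-[\cD],
\end{equation*}
which is exactly the claimed identity. There is no substantial obstacle here; the only two points that deserve a word of justification are the additivity of $c_1$ for the non-locally-free sheaf $\cA$ (standard on a smooth DM stack, where $\cA$ already comes equipped with the locally free resolution \eqref{01}) and the equality $c_1(\cA)=[\cD]$, which is precisely the observation preceding the lemma.
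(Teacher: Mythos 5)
Your proof is correct and follows essentially the same route as the paper: the paper's (very terse) argument is exactly the observation that $\cA=\sO_\cD(A)$ is the pushforward of a line bundle from the smooth divisor $\cD$, so $c_1(\cA)=[\cD]$, combined with Whitney additivity of $c_1$ along the short exact sequence \eqref{01} defining $\cA$. Your write-up simply makes explicit the two points the paper leaves implicit (the rank equality of $E_0$ and $F$, and why $c_1(\iota_*\sO_\cD(A))=[\cD]$ independently of $A$).
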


\iffalse
\begin{proof}
%We let $\cA_1$ be the sheaf fitting into the exact sequence
%\beq\label{01}0\lra \sO_\cW(E_0)\lra\sO_\cW(F)\lra\cA_1\lra 0.
%\eeq
%By Assumption-I, $\cA|_{\cD}$ is an invertible sheaf of $\sO_\cD$-modules. 
Since $A$ is a line bundle on $\cD$, we have $c_1(\cA)=[\cD]$,
thus $$c_1(E_1-F)=c_1([E_1\to E_0])-[\cD].$$

%%%%%%%%%%%The following is a proof of $c_2$ case.
\iffalse
For $c_2$, first as $\cA=\sO_\cD(A)$,
%$\cA_1|_\cD$ is an invertible sheaf of $\sO_\cD$-modules,
$c_2(\cA_1)=[\cD]^2-\deg A$. We then calculate
$$c_2(E_1-F)=c_2(E_1)-c_2(F)-c_1(E_1)\cdot c_1(F)+c_1(F)^2.
$$
By \eqref{01}, and applying a similar formula to $c_2(F)=c_2(E_0+\cA_1)$, we get
$c_2(F)=c_2(E_0)+c_1(E_0)\cdot [\cD]+c_2(\cA)$. Combined, we prove the identity on $c_2(E_1-F)$.
\fi
\end{proof}
\fi
%\medskip

We let $J\sub E_0|_\cD$ be the kernel of $s|_\cD$; by our assumption it is a line bundle on $\cD$. We relate
$A$ to $J$.

\begin{lem}\label{J}
Let the situation be as stated, and assume Assumption-I, then $A\cong J(\cD)$.
\end{lem}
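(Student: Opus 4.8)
The plan is to restrict the defining sequence \eqref{01} to the divisor $\cD$ and read off the kernel and cokernel of the restricted map. First I would note that, since $s=\eta_2\circ\eta_1$ with $\eta_2\colon\sO_\cW(F)\to\sO_\cW(E_1)$ a subbundle inclusion and hence fiberwise injective, the kernel is unaffected by passing through $F$: $J=\ker(s|_\cD)=\ker(\eta_1|_\cD)\sub E_0|_\cD$. This reduces the statement to understanding the four-term exact sequence obtained by restricting \eqref{01} to $\cD$.

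To produce that sequence I would let $t\in\Gamma(\sO_\cW(\cD))$ be the tautological section cutting out $\cD$ and write down the commutative diagram whose two rows are \eqref{01} and its twist by $\sO_\cW(-\cD)$, with every vertical arrow given by multiplication by $t$. The two left columns have injective vertical maps, because $\sO_\cW(E_0)$ and $\sO_\cW(F)$ are locally free, with cokernels $E_0|_\cD$ and $F|_\cD$. The crucial point is that the third column, multiplication by $t$ on $\cA$, is the \emph{zero} map, since $\cA=\iota_\ast A$ is supported on $\cD=(t=0)$; its kernel is therefore $\cA(-\cD)$ and its cokernel is $\cA$. The snake lemma then delivers the exact sequence
\begin{equation*}
0\lra \cA(-\cD)\mapright{\delta} E_0|_\cD \mapright{\eta_1|_\cD} F|_\cD \lra \cA \lra 0,
\end{equation*}
and the projection formula identifies $\cA(-\cD)=\iota_\ast A\otimes\sO_\cW(-\cD)$ with $\iota_\ast\bl A(-\cD)\br$.

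It then remains to compare the two descriptions of $\ker(\eta_1|_\cD)$. Since $\delta$ is injective, the snake sequence identifies this kernel with $A(-\cD)=A\otimes\sO_\cD(-\cD)$, whereas the first step identifies it with $J$. Hence $J\cong A(-\cD)$, that is $A\cong J(\cD)$, as claimed. As an independent check I would take determinants of the four-term sequence to get $A\cong J\otimes(\det F\otimes\det E_0^{-1})|_\cD$; combining the resolution \eqref{01} with the stated identity $c_1(\cA)=[\cD]$ (equivalently Lemma \ref{lem1.1}) gives $\det F\otimes\det E_0^{-1}=\sO_\cW(\cD)$, whose restriction to $\cD$ is $\sO_\cD(\cD)$, recovering $A\cong J(\cD)$.

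The step requiring the most care will be the snake-lemma computation: verifying that multiplication by $t$ genuinely vanishes on $\cA$, and tracking the twist through the projection formula so that the connecting map lands injectively and identifies $\ker(\eta_1|_\cD)$ with $A(-\cD)$ rather than $A(\cD)$. Once this four-term sequence is in place, the rest is formal.
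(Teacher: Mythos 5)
Your proof is correct and is essentially the paper's own argument in cleaner packaging: the snake-lemma connecting map $\delta$ obtained from multiplication by $t$ on \eqref{01} is precisely (the inverse of) the homomorphism $\varphi$ that the paper constructs by hand — lift a local section of $J$ to $\sO_\cW(E_0)$, apply $\phi$, divide by $t$, and project to $\sO_\cD(A(-\cD))$ — so your four-term exact sequence
$0\to \cA(-\cD)\to E_0|_\cD\to F|_\cD\to\cA\to 0$
automates exactly the "direct to check" well-definedness and isomorphism claims the paper leaves to the reader. Your preliminary step $\ker(s|_\cD)=\ker(\eta_1|_\cD)$, via fiberwise injectivity of $\eta_2|_\cD$, is likewise the fact implicitly invoked when the paper asserts $\phi(\ti\xi)|_\cD=0$, and your determinant computation is a legitimate independent confirmation.
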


\begin{proof}
Let $\cJ=\sO_\cD(J)$ and let $\eta=\ker\{\sO_\cD(F)\to\cA\}$. Then $\eta$ fits into the exact sequences
$$0\to \sO_\cD(J)\to\sO_\cD(E_0)\to \eta\to 0\quad \text{and}\quad 0\to \eta\to\sO_\cD(F)\to \sO_\cD(A)\to 0.
$$
Let $\xi\in \sO_\cD(J)$ be any (local) section. Let $\ti\xi\in\sO_\cW(E_0)$ be a lift of the image of $\xi$ in
$\sO_\cD(E_0)$. Then $\phi(\ti\xi)\in \sO_\cW(F)$, where $\phi$ is as in \eqref{01}. Clearly, $\phi(\ti\xi)|_\cD=0$.
Let $t\in\sO_\cW(\cD)$ be the defining equation of $\cD$. Then $t\upmo \phi(\ti\xi)\in \sO_\cW(F)(-\cD)$. 
We define $\varphi(\xi)$ be the image
of $t\upmo\phi(\ti\xi)$ in $\sO_\cD(A(-\cD))$, under the composition 
$$\sO_\cW(F)(-\cD)\lra \sO_\cD(F(-\cD))\lra \sO_\cD(A(-\cD)).
$$

It is direct to check that $\varphi$ is a well-defined homomorphism of sheaves $\sO_\cD(J)\to \sO_\cD(A(-\cD))$,
and is an isomorphism. This proves the Lemma.
\end{proof}

This way, $\cM$ (cf. \eqref{M}) is a union of $\cW\sub E_0$ (the $0$-section) and the subbundle $J\sub E_0|_\cD\sub E_0$.
%Further, because $\cD$ is a Cartier divisor, we can find a vector bundle $E_0'$ on $\cW$ fitting into
%\begin{equation}\label{2}
%E_0\mapright{\eta_1} F\mapright{\eta_2} E_1
%\end{equation}
%so that $\eta_1|_{\cW-\cD}=s|_{\cW-\cD}$ is an isomorphism, and
%$F\to E_1$ is a subvector bundle.
As $\cM\sub E_0$ is defined by the vanishing of $\ti s$, it comes with a normal cone
\beq\label{C}\bC\defeq\lim_{t\to 0}\Gamma_{t\upmo \ti s}\sub \ti E_1|_\cM.
\eeq

\begin{lem}\label{lem1.2} With Assumption-I,
the cone $\bC\sub\ti E_1|_\cM$ is a union of two subvector bundles $\eta_2(F)\sub E_1$ and 
$\pi\sta\eta_2(F)|_J\sub \ti E_1|_J$.
%where $L_1\sub E_2$ is the image of $\eta_2$ in \eqref{2}, and $L_2\sub \ti E_1|_{J}$ takes the form $L_2=\pi\sta L_2'$, where $L_2'\sub E_1|_\cD$
%is a subline bundle.
\end{lem}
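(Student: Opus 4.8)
The plan is to factor the section $\ti s$ through the subbundle $\pi\sta F\sub\ti E_1$ and to recognize the resulting section of $\pi\sta F$ as a \emph{regular} section, so that the normal cone $\bC$ is literally a vector bundle over $\cM$, namely the restriction of $\pi\sta\eta_2(F)$; splitting this bundle over the two components of $\cM$ will then give the two asserted subbundles.

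First I would use the factorization $s=\eta_2\circ\eta_1$ from \eqref{2}, with $\eta_1\colon E_0\to F$ and $\eta_2\colon F\hookrightarrow E_1$ a subbundle inclusion. Exactly as $s$ induces $\ti s\in\Gamma(\ti E_1)$, the map $\eta_1$ induces a section $\txi\in\Gamma(\pi\sta F)$, and $\ti s=(\pi\sta\eta_2)(\txi)$. Because $\pi\sta\eta_2$ is a subbundle inclusion it is injective, so $\ti s$ and $\txi$ have the same scheme-theoretic zero locus, giving $\cM=(\ti s=0)=(\txi=0)$; moreover the surjection $\ti E_1^\vee|_\cM\to I_\cM/I_\cM^2$ that defines the embedding $\bC\sub\ti E_1|_\cM$ factors through $(\pi\sta\eta_2)^\vee$. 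Hence $\bC$ is the image under $\pi\sta\eta_2|_\cM$ of the normal cone of $(\txi=0)$ inside $\pi\sta F|_\cM$, and it suffices to compute the latter.

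Next I would check that $\txi$ is a regular section of $\pi\sta F$. Recall from the discussion after Lemma \ref{J} that $\cM$ is the union of the $0$-section $\cW\sub E_0$ and the total space of the line subbundle $J\sub E_0|_\cD$; indeed $\txi$ vanishes precisely where $\eta_1$ does, i.e. on the $0$-section away from $\cD$ and on all of $J$ over $\cD$. Both components have dimension $\dim\cW$ (for $J$ one uses that $\cD$ is a divisor in $\cW$ and $J$ a line bundle over it), so both have codimension $a_0=\operatorname{rk}(\pi\sta F)$ in the smooth stack $E_0$. Thus $(\txi=0)$ has pure expected codimension; since $E_0$ is Cohen--Macaulay the Koszul complex of $\txi$ is then exact, so $\txi$ is a regular section and $\cM\hookrightarrow E_0$ is a regular embedding. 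Consequently $\bC=C_{\cM/E_0}=N_{\cM/E_0}=\pi\sta F|_\cM$, which sits inside $\ti E_1|_\cM$ as $\pi\sta\eta_2(F)|_\cM$.

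Finally I would restrict $\pi\sta\eta_2(F)|_\cM$ to the two components of $\cM$: over the $0$-section $\cW$ one has $\pi|_\cW=\mathrm{id}$, so the component is $\eta_2(F)\sub E_1$, while over $J$ it is $\pi\sta\eta_2(F)|_J\sub\ti E_1|_J$, which is exactly the claim. The one point that genuinely needs care is that although $s$ itself drops rank along $\cD$, the cone does \emph{not} collapse to the smaller bundle $s(E_0)$ over $\cD$; this is precisely what the passage to $F$ fixes, since $\txi$ cuts out $\cM$ in expected codimension even though its composite $\ti s$ with $\eta_2$ looks degenerate. The main obstacle is therefore the codimension/regularity verification, in particular confirming purity of codimension $a_0$ along the locus where the $0$-section and $J$ meet; everything else is formal.
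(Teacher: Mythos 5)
Your proof is correct, but it takes a genuinely different route from the paper's. The paper argues locally: it reduces without loss of generality to $a_0=1$, chooses an analytic chart $(u,x)$ on $\cW$ in which $\cD=(x=0)$ and $s=(x,0)$, so that $\ti s=(xy,0)$ on $E_0$ with fiber coordinate $y$, and then reads off the cone of $(xy=0)$ directly as the coordinate line subbundle of $\ti E_1|_\cM$. You instead stay global and coordinate-free: you factor $\ti s=(\pi\sta\eta_2)(\txi)$ through the subbundle $\pi\sta F\sub\ti E_1$, check that the zero locus of $\txi$ is $\cW\cup J$ of pure codimension $a_0=\operatorname{rk}(\pi\sta F)$ in the smooth (hence Cohen--Macaulay) total space $E_0$, invoke Koszul exactness to conclude $\txi$ is a regular section, and deduce $\bC=\pi\sta\eta_2(F)|_\cM$, whose restrictions to the two components $\cW$ and $J$ are exactly the two subbundles in the statement. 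Each step checks out: the scheme-theoretic equality $(\ti s=0)=(\txi=0)$ and the factorization of the cone embedding both follow from $F\sub E_1$ being a subbundle (locally split), and the dimension count $\dim J=(\dim\cW-1)+1=\dim\cW$ gives the purity you need. What your argument buys is uniformity in $a_0$ (no reduction to rank one), no appeal to analytic charts, and a slightly stronger conclusion --- the cone is identified as a single vector bundle $\pi\sta\eta_2(F)|_\cM$ over all of $\cM$, scheme structure included, which is precisely the form used when \cite[Lemma 6.4]{CL} is applied in Proposition \ref{prep1}. What the paper's computation buys is brevity and explicitness: the local normal form makes the two-component geometry visible at a glance.
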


\begin{proof}
This is local, thus without loss of generality we can assume $a_0=1$.
Since $\cD=(s=0)$ is a smooth divisor in $\cW$, near a point at $\cD$ we can give $\cW$ an analytic neighborhood $U$ with chart $(u,x)$, where $u$ is a multi-variable, so that $\cD=(x=0)$ and $s|_U:E_0|_U\to E_1|_U$ 
takes the form 
$$s|_U=(x,0): \sO_U\to \sO_U\oplus \sO_U^{\oplus (a_1-1)}\cong \sO_U(E_1).
$$
We let $y$ be the fiber-direction coordinate of $E_0|_U$. Then $\pi\upmo(U)\sub E_0$ has the chart 
$(u,x,y)$, with $\ti s|_{\pi^{-1}(U)}=(xy,0)$.
Therefore, the cone $\bC\sub E_0$ over $\pi\upmo(U)$ is the line bundle
$$\sO_{\pi\upmo(U)\cap \cM}\sub 
\sO_{\pi\upmo(U)\cap \cM}\oplus \sO_{\pi\upmo(U)\cap \cM}^{\oplus (a_1-1)}
\cong \sO_{\pi\upmo(U)\cap \cM}(\ti E_1).
$$
This proves the Lemma.
%the union of two line bundle: $L_1\sub E_0$ and $L_2\sub \ti E_1|_D$, characterized as follows: By the vanishing criterion of $s$, it induces $E_0(D)\to E_1$, which is a subline bundle. $L_1=E_0(D)$ and $L_1\sub E_1$ is the given inclusion. Clearly, $\bC|_{E_0|_D-D}$ is a subline bundle of $\pi\sta E_2|_{E_0|_D-D}$. Because of the expression $\ti s=(xy,0)$, we see that it extends to a subline bundle of $\pi\sta E_2|_{E_0|_D}$. We denote it by $L_2$. Further, a direct check shows that $L_2=\pi\sta L_2'$ as stated in the Lemma.
\end{proof}

\smallskip
\noindent
{\bf Assumption-II}. {\sl 
We assume that there is a homomorphism (cosection)
$$\sigma: \ti E_1|_\cM\lra\sO_\cM$$
so that $\sigma|_{\cW}=0$, and
$\pi\sta\eta_2(F)|_J$ lies in the kernel of $\sigma$.}

 Let 
$$[\cM]\virt_\sigma\defeq 0_\sigma^![\bC]\in A^{a_1-a_0} \cW
$$ 
be the image  of $[\bC]$ under the cosection localized Gysin map.

\begin{prop}\label{prep1}
%The substack $D$ is a Cartier divisor. Further
Let the situation be as mentioned, and the cosection $\sigma$ is fiberwise homogeneous of degree $e$. 
Then 
$$[\cM]_\sigma\virt=-c_1(E_0-E_1)-(e+1)[\cD]\in A^1 \cW,\quad\text{when}\ a_1-a_0=1\,.$$
%\begin{align*}[\cM]_\sigma\virt=-c_1(E_0-E_1)-(e+1)[\cD]\in A^1 W,\quad\text{when}\ a_1-a_0=1;\\
%{\red [\cM]_\sigma\virt=c_2(E_1-E_0)+(1-e^2)\iota\lsta c_1(J)\in A^2 W,\quad\text{when}\ a_1-a_0=2.}
%\end{align*}
\end{prop}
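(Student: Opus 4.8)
The plan is to compute $[\cM]_\sigma\virt=0^!_\sigma[\bC]$ by combining the cone decomposition of Lemma \ref{lem1.2} with the linearity of the cosection localized Gysin map on cycles. Writing $[\bC]=[\eta_2(F)]+[\pi\sta\eta_2(F)|_J]$ for the two components of $\bC$ supported over the zero-section $\cW\sub E_0$ and over $J$ respectively, I would split $0^!_\sigma[\bC]=0^!_\sigma[\eta_2(F)]+0^!_\sigma[\pi\sta\eta_2(F)|_J]$ and evaluate the two terms independently. The entire argument hinges on the hypothesis $a_1-a_0=1$: from the exact sequence \eqref{01} the bundle $F$ has rank $a_0$, so by Assumption-I the inclusion $\eta_2\colon F\hookrightarrow E_1$ has line-bundle cokernel, and every localized intersection below collapses to a first Chern class.

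For the component over the zero-section, Assumption-II gives $\sigma|_\cW=0$, so the cosection is identically zero on (the support of) this piece; hence the localized Gysin map agrees there with the ordinary one. Since $\eta_2(F)\sub E_1=\ti E_1|_\cW$ is a rank-$a_0$ subbundle with quotient the line bundle $E_1/\eta_2(F)$, I would compute $0^!_\sigma[\eta_2(F)]=0^!_{E_1}[\eta_2(F)]=c_1\bl E_1/\eta_2(F)\br\cap[\cW]=c_1(E_1-F)$, using $c_1(\eta_2(F))=c_1(F)$ as $\eta_2$ is a bundle inclusion. This already lies in $A^1\cW$.

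The main obstacle is the component over $J$, where $\sigma$ is nonzero and its homogeneity must be invoked. Here I would pass to the quotient line bundle $Q\defeq \ti E_1|_J/\pi\sta\eta_2(F)|_J$; because $\pi\sta\eta_2(F)|_J\sub\ker\sigma$ by Assumption-II, the cosection descends to $\bar\sigma\colon Q\to\sO_J$, and the localized intersection of the subbundle-cone $\pi\sta\eta_2(F)|_J$ with the zero section of $\ti E_1|_J$ reduces to the cosection localized first Chern class of $Q$ against $\bar\sigma$. The crucial input is that $\bar\sigma$ is fiberwise homogeneous of degree $e$ along the fibers of $J\to\cD$, so as a section of $Q^\vee$ it vanishes to order exactly $e$ along the zero-section $\cD\sub J$; its remaining zeros form a non-proper locus fibered in affine lines over a sublocus of $\cD$, whose pushforward to $\cW$ vanishes by the projection formula (the projection of an affine-line bundle kills the fundamental class). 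The localized Gysin map is normalized so that forgetting the cosection recovers $c_1(Q)\cap[J]$; since $c_1(Q)=-c_1(Q^\vee)$ while the divisor of $\bar\sigma$ realizes $c_1(Q^\vee)$, the localized class equals minus that divisor, and its proper part contributes $-e[\cD]\in A^1\cW$. Pinning down this order of vanishing and the sign is the delicate step, and I would anchor it in the explicit chart $(u,x,y)$ of Lemma \ref{lem1.2}, in which $\bar\sigma$ takes the monomial form $c(u)\,y^e$.

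Finally I would assemble the two contributions to get $0^!_\sigma[\bC]=c_1(E_1-F)-e[\cD]$ and rewrite the first term via Lemma \ref{lem1.1}, namely $c_1(E_1-F)=c_1(E_1-E_0)-[\cD]=-c_1(E_0-E_1)-[\cD]$. Substituting yields $[\cM]_\sigma\virt=-c_1(E_0-E_1)-(e+1)[\cD]$, which is the claimed formula.
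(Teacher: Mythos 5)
Your overall skeleton --- splitting $[\bC]=[\eta_2(F)]+[\pi\sta\eta_2(F)|_J]$, computing the piece over the zero section by the ordinary Gysin map (valid since $\sigma|_\cW=0$, so that piece sits over the degeneracy locus), and assembling via Lemma \ref{lem1.1} --- is the same as the paper's, and your first term $c_1(E_1-F)$ together with the final bookkeeping is correct. Where you diverge is the component over $J$: the paper never computes directly on the non-proper space $J$. It compactifies $J$ to $\cP=\PP_\cD(J\oplus 1)$, uses the degree-$e$ homogeneity to extend the cosection to a homomorphism $\bar\sigma\colon \bar\pi\sta E_1(-e\cD_\infty)\to\sO_\cP$ that is \emph{surjective along} $\cD_\infty$, and then invokes \cite[Lemma 6.4]{CL}, which gives
$0^!_\sigma[\bC]=0^!_{E_1}[F]+\bar\pi\lsta\bl 0^!_{\bar\pi\sta E_1(-e\cD_\infty)}[\bar\pi\sta F(-e\cD_\infty)]\br$;
the second term is then a Chern-class computation on the proper $\PP^1$-bundle $\cP\to\cD$, namely $\bar\pi\lsta\bl \bar\pi\sta c_1(E_1/F)-e[\cD_\infty]\br=-e[\cD]$.

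Your substitute for this step has a genuine gap. You allow the zero locus of $\bar\sigma$ (viewed as a section of $Q^\vee$ on $J$) to contain, besides $e\cdot\cD$, ``remaining zeros \dots fibered in affine lines over a sublocus of $\cD$,'' and you discard these because ``the projection of an affine-line bundle kills the fundamental class'' under pushforward. No such operation exists: proper pushforward is not defined along the non-proper projection of an $\AA^1$-bundle, so those components cannot be disposed of this way. Worse, if such vertical components were actually present, the degeneracy locus of $\sigma$ would be non-proper and the cosection-localized class would not even be defined as an element of $A^1\cW$; properness of the degeneracy locus is exactly what the paper's assertion that $\bar\sigma$ is surjective along $\cD_\infty$ encodes. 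The correct statement is that under the standing hypotheses $\sigma$ vanishes on $J$ only along its zero section, so the divisor of $\bar\sigma$ is exactly $e[\cD]$ and there is nothing to discard --- but this must be argued, not waved away. In addition, your identification of $0^!_\sigma[\pi\sta\eta_2(F)|_J]$ with a cosection-localized first Chern class of the quotient line bundle $Q$, including the sign normalization, is asserted rather than proved; this functoriality is precisely what \cite[Lemma 6.4]{CL} (equivalently, the local chart of Lemma \ref{lem1.2} combined with the compactification) supplies. If you either justify that reduction and rule out vertical zeros, or follow the paper's compactification route, your value $-e[\cD]$ and hence the final formula $-c_1(E_0-E_1)-(e+1)[\cD]$ go through.
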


\begin{proof}
Following the discussion leading to \cite[Lemma 6.4]{CL}, we compactify $\cM$ by compactifying
$J$ by $\cP\defeq \PP_\cD(J\oplus 1)$. Let $\cD_\infty=\PP_\cD(J\oplus 0)\sub
\PP_\cD(J\oplus 1)$. Then $\cP=J\cup \cD_\infty$, and
$\barcM= \cP\cup \cW$. Let $\bar\pi: \cP\to \cD$ be the tautological projection. 
Then $\pi\sta F|_{J}\sub \ti E_1|_{J}$ extends to $\bar\pi\sta F\sub \bar\pi\sta E_1$,
a subbundle.
Because $\sigma$ is fiberwise homogeneous of degree $e$, we see that %We now investigate the extension of 
$\sigma|_J: \ti E_1|_J=\bar\pi\sta E_1|_{J}\lra\sO_{J}$
extends to %a meromorphic homomorphism with an order $e$ pole along $\cD_\infty\sub \barcM$.
a homomorphism
$$\bar\sigma: \bar\pi\sta E_1(-e\cD_\infty)\lra\sO_{\cP},
$$
surjective along $\cD_\infty=\barcM-\cM$. 

%We twisted the bundle $\bar\pi\sta E_1$ by $\cD_\infty$ to get $\bar\pi\sta E_1(-\cD_\infty)$, and denote by 
We let $\bar\pi\sta F(-e\cD_\infty)\sub \bar\pi\sta E_1(-e\cD_\infty)$ be the associated twisting of the subbundle 
$\bar\pi\sta F\sub \bar\pi\sta E_1$. Applying \cite[Lemma 6.4]{CL}, we conclude that
\beq\label{00}0_\sigma^![\bC]=0_{E_1}^![F]+\bar\pi\lsta\bl 0^!_{\bar\pi\sta E_1(-e\cD_\infty)}[\bar\pi\sta F(-e\cD_\infty)]\br.
\eeq

When $a_1-a_0=1$, 
\begin{align*}
0^!_{\bar\pi\sta E_1(-e\cD_\infty)}[\bar\pi\sta F(-e\cD_\infty)]&=c_1\bl\bar\pi\sta (E_1/F)(-e\cD_\infty)\br
=\bar\pi\sta c_1(E_1/F)-e[\cD_\infty].
\end{align*}
Thus
$\bar\pi\lsta\bl 0^!_{\bar\pi\sta E_1(-2\cD_\infty)}[\bar\pi\sta F(-e\cD_\infty)]\br=-e[\cD]$.
Combined with Lemma \ref{lem1.1}, we prove the lemma.
%%%%%%%%%%%%WE OMIT THE PROOF FOR $C_2$ CASE.
\iffalse
We now consider the case $a_1-a_0=2$. We calculate
$$c_2\bl\bar\pi\sta E_1(-e\cD_\infty)/\bar\pi\sta F(-e\cD_\infty)\br=
c_2\bl\bar\pi\sta(E_1/F)\br+c_1\bl\bar\pi\sta(E_1/F)\br\cdot[-e\cD_\infty]+e^2[\cD_\infty]^2.
$$
We have $\bar\pi\lsta\bl c_2(\bar\pi\sta(E_1/F))\br=0$;
$$\bar\pi\lsta\bl c_1\bl\bar\pi\sta(E_1/F)\br\cdot[\cD_\infty]\br=
c_1(E_1/F)\cdot[\cD],\and [\cD_\infty]^2=-\iota\lsta c_1(J).
$$
Here $\iota\lsta c_1(J)\in A^2\cW$ is as stated before Lemma \ref{lem1.1}.

Combined with Lemma \ref{lem1.1}, we get
$$0^!_\sigma[\bC]=\bl c_2(E_1-E_0)-c_1(e_1-E_0)\cdot [\cD]+\iota\lsta c_1(A)\br
+\bl c_1(E_1/F)\cdot[\cD]+e^2(-\iota\lsta c_1(J))\br.
$$
Finally, using $c_1(A)=c_1(J)+[\cD]^2$ and $c_1(E_1/F)=c_1(E_1-E_0)-[\cD]$, we get
$$0^!_\sigma[\bC]=c_2(E_1-E_0)+(1-e^2)\iota\lsta c_1(J).
$$
This proves the lemma.
\fi
\end{proof}

\subsection{Applying to FJRW invariant}

%We now apply the previous discussion to calculate the FJRW invariant $\Theta_{1,3}$. We let $\cW=\Mbar_{1,2^3}^{1/3}$;
We let $\cM=\Mbar_{1,2^3}^{1/3,p}$. We claim that there is a complex of vector bundle as in \eqref{1} so that $\cM$ is defined as in
\eqref{M}, and there is a cosection $\sigma$ as in Assumption-II satisfying the condition stated.

Indeed, let $\Mbar_{1,2^3}$ be the moduli of 3-pointed genus one twisted curves with all markings are $\mu_3$ stacky. 
Then the forgetful morphism $q: \Mbar_{1,2^3}^{1/3}\to \Mbar_{1,2^3}$ is finite and smooth. 
Further, let $(\Sigma\sub \sC,\sL)$ be the universal family of 
$\Mbar_{1,2^3}^{1/3}$, then $(\Sigma\sub \sC)$ is the pull back of the universal family of $\Mbar_{1,2^3}$. 
Then a standard method shows that we can find a complex $\cE^\bullet=[s:\sO_{\sC}(E_0)\to \sO_{\sC}(E_1)]$ 
of locally free sheaves so that
$\cE^\bullet= R^{\bullet}\pi\lsta \sL$, in the derived category. Here $\pi: \sC\to\Mbar_{1,2^3}^{1/3}$ is the projection.
Then a standard argument shows that this complex $\cE^\bullet$ is the desired one, giving a canonical embedding of
$\cM=\Mbar_{1,2^3}^{1/3,p}$ into the total space of $E_0$, as the vanishing locus of $\tilde s$. 

The choice of cosection $\sigma$ is induced by $\sO_\cW(E_1)\to H^1(\cE^\bullet)$, following that in \cite{CLL}, and satisfies
Assumption-II.
Finally, following the construction of $[\Mbar_{1,2^3}^{1/3,p}]\virt$, we see that
$$[\cM]\virt_\sigma=[\Mbar_{1,2^3}^{1/3,p}]\virt.
$$
We skip the details here.

We next check that the Assumption-I holds in this case.

\begin{lem}
Let $\cD\sub \cW$ ($=\Mbar_{1,2^3}^{1/3}$) be the locus where $R^0\pi\lsta\sL$ is non-trivial, then it is a smooth divisor of $\cW$.
\end{lem}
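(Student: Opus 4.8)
The plan is to show that $h^0(\sC,\sL)$ can jump only on a single boundary divisor, and that there the jump is governed by a discrete, locally constant condition on the spin structure; this exhibits $\cD$ as a union of connected components of that (smooth) boundary divisor. Throughout I write $\lfloor\sL\rfloor$ for the pushforward of $\sL$ to the coarse curve (the ``round-down'' bundle), whose degree on each component is the orbifold degree minus the monodromies along that component.

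First I would dispose of the interior. Since $\sL^{\otimes 3}\cong\omega_\sC^{\log}$ and each marking carries monodromy $\tfrac23$, orbifold Riemann--Roch gives $\chi(\sL)=-1$ (consistent with $a_1-a_0=1$), and over a \emph{smooth} genus-one curve $\deg\lfloor\sL\rfloor=1-3\cdot\tfrac23=-1$. A line bundle of negative degree on an elliptic curve has no sections, so $R^0\pi\lsta\sL=0$ on the open locus of smooth curves and $\cD\sub\partial\cW$.

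Next I would run through the boundary divisors inherited from $\Mbar_{1,3}$ -- the irreducible divisor $\delta_{\mathrm{irr}}$ and the separating divisors $\delta_{0,S}$ with $|S|\in\{2,3\}$ -- using the following dichotomy: at a \emph{twisted} node (nonzero monodromy) the $\mu_3$-invariant part of the fibre of $\sL$ is zero, so a global section is an unconstrained pair of sections on the two branches and $h^0$ is additive over components, whereas at an \emph{untwisted} node there is the usual gluing constraint. On $\delta_{\mathrm{irr}}$ the normalization is a $\PP^1$ on which $\lfloor\sL\rfloor$ has degree $-2$, so $h^0=0$. On $\delta_{0,S}$ with $|S|=2$, integrality of the coarse degrees forces the node to be untwisted; then $\lfloor\sL\rfloor$ has degree $-1$ on the rational bridge (no sections) and degree $0$ on the elliptic tail, but the gluing constraint against the (necessarily zero) section on the rational component kills any section, so again $h^0=0$. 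The only surviving case is $\delta_{0,\{1,2,3\}}$: here the elliptic tail $E'$ carries only the node, which is twisted of monodromy $\tfrac13$; the rational component $R$ carries all three markings with $\deg\lfloor\sL|_R\rfloor=-2$; and $M:=\lfloor\sL|_{E'}\rfloor$ is a degree-zero line bundle with $M^{\otimes 3}\cong\sO_{E'}$. By twisted-node additivity, $h^0(\sC,\sL)=h^0(E',M)$, which is $1$ if $M\cong\sO_{E'}$ and $0$ otherwise.

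Consequently $\cD$ is exactly the locus in $\overline{\delta_{0,\{1,2,3\}}}$ where the elliptic-tail spin structure is the canonical cube root $M\cong\sO_{E'}$. Under the clutching isomorphism $\overline{\delta_{0,\{1,2,3\}}}\cong \Mbar^{1/3}_{1,1}\times\Mbar^{1/3}_{0,4}$, this condition is pulled back from the $\Mbar^{1/3}_{1,1}$-factor and is locally constant, since it cuts out the image of the identity section of the $3$-torsion local system $\mathrm{Pic}^0(E')[3]$. Hence $\cD$ is a union of connected components of the smooth divisor $\overline{\delta_{0,\{1,2,3\}}}$, and is therefore a smooth (reduced, Cartier) divisor of $\cW$. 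The step demanding the most care is the boundary analysis, in particular the twisted-versus-untwisted node dichotomy: it is precisely the vanishing of the invariant fibre at a twisted node that lets $h^0$ jump along an \emph{entire} component of $\delta_{0,\{1,2,3\}}$ rather than on a deeper, higher-codimension locus. I would also confirm that no higher-codimension degeneration (a nodal or cycle-type elliptic tail) yields a stray component of $\cD$ outside $\overline{\delta_{0,\{1,2,3\}}}$; this holds because any configuration supporting a section must possess an arithmetic-genus-one tail disjoint from the markings and carrying the canonical cube root, which is exactly a point of $\overline{\delta_{0,\{1,2,3\}}}$.
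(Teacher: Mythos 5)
Your route is genuinely different from the paper's, and most of it is sound. The paper argues pointwise: a nonzero section forces the curve to split at a twisted node into an unmarked genus-one tail $\sE$ with $h^0(\sL|_\sE)=1$ and a $4$-pointed rational component carrying all markings; it then upgrades this to families by showing $R^0\pi\lsta\sL$ is locally free of rank one on $\cD$ and taking the vanishing locus of a local generator, which canonically decomposes the universal curve over $\cD$ into tails and rational parts and thereby identifies $\cD$ with a smooth product-type moduli of split spin curves. You instead stratify by the boundary divisors of $\overline{\mathcal{M}}_{1,3}$, kill $\delta_{\mathrm{irr}}$ and $\delta_{0,S}$ with $|S|=2$ by coarse-degree and gluing arguments, and exhibit $\cD$ as the trivial-cube-root locus inside $\overline{\delta_{0,\{1,2,3\}}}$, getting smoothness from smoothness of that divisor plus an open-and-closed argument. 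Both proofs hinge on the same geometric identification; yours makes explicit the integrality/degree computations that the paper compresses into ``a direct calculation shows,'' while the paper's universal-section trick avoids having to analyze what happens when the elliptic tail itself degenerates. (Minor quibble: on $\delta_{\mathrm{irr}}$ the coarse degree is $-2$ only when the node is twisted, and $-1$ when it is untwisted; either way $h^0=0$, so nothing is affected.)

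The step where your write-up has a real gap is the final local-constancy claim. The justification ``it cuts out the image of the identity section of the $3$-torsion local system $\mathrm{Pic}^0(E')[3]$'' is valid only over the locus where the tail $E'$ is smooth: over the boundary point of $\overline{\mathcal{M}}^{1/3}_{1,1}$, where the tail is a nodal cubic, $\mathrm{Pic}^0[3]$ is not \'etale (its order drops from $9$ to $3$, the remaining torsion escaping into twisted-node spin structures), so there is no local system there and your sentence does not apply. This matters because ``union of connected components of $\overline{\delta_{0,\{1,2,3\}}}$'' requires $\cD$ to be \emph{open} in $\overline{\delta_{0,\{1,2,3\}}}$ at exactly those deeper points, and your closing remark only rules out components of $\cD$ lying \emph{outside} $\overline{\delta_{0,\{1,2,3\}}}$, which is a different check. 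The statement is still true, but for a reason you must supply: the point (nodal cubic, trivial root, untwisted tail-node) does lie in $\cD$ (constants give $h^0=1$, and the twisted attaching node imposes no gluing), and since $\overline{\mathcal{M}}^{1/3}_{1,1}$ is finite \'etale over the moduli of twisted curves, the trivial root deforms uniquely to the trivial root along any deformation of the tail, so $\cD$ is open there as well (closedness being semicontinuity); equivalently, the closures of the trivial and nontrivial components of the $3$-torsion stay disjoint because their limits over the nodal cubic are distinct. Relatedly, you assert without argument that $\overline{\delta_{0,\{1,2,3\}}}$ is smooth; this is true, but only because no stable $3$-pointed genus-one curve admits two distinct nodes each separating off an unmarked genus-one tail (so the clutching map is an embedding) and because passing to twisted/spin moduli preserves this — for general $(g,n)$, boundary divisors of $\overline{\mathcal{M}}_{g,n}$ do self-intersect, so this deserves a sentence.
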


\begin{proof}
Let $(\sC,\Sigma, \sL)\in \cW$ be a closed point so that $H^0(\sL)\ne 0$. Then a direct calculation shows that $\sC$ has a node
$q\in \sC$ that separates $\sC$ into two irreducible components $\sE$ and $\sR$, so that $q\sub \sE$ is a 1-pointed (twisted) elliptic curve
with $h^0(\sL|_\sE)=1$, and $q\cup \Sigma\sub\sR$ is a 4-pointed (twisted) rational curve. The same argument shows that
the converse is also true.
Therefore, letting $\cD\sub\Mbar_{1,2^3}^{1/3}$ be the closed locus (see Fig. \ref{divisor} below) where $R^0\pi\lsta\sL$ is non-trivial, $R^0\pi\lsta\sL$
is a locally free sheaf of $\sO_{\cD}$-modules. Equivalently, this says that, letting 
$$\pi_{\cD}:\sC_\cD=\sC\times_{\Mbar_{1,2^3}^{1/3}}\cD\lra \cD
$$
be the projection, then $\pi_{\cD\ast}\bl \sL|_{\sC_{\cD}}\br$ is a rank one locally free sheaf of $\sO_\cD$-modules. Let $t$ be a local
section of this sheaf, then $(t=0)\sub\sC_\cD$ becomes a family of rational curves, the family that contains all those $q\cup \Sigma\sub\sR$ 
mentioned. This shows that $\sC_\cD\to\cD$ is exactly the subfamily in $\Mbar_{1,2^3}^{1/3}$ that can be decomposed into 1-pointed
twisted elliptic curves $q\sub \sE$ with $h^0(\sL|_\sE)=1$, and 4-pointed twisted rational curves $q\cup \Sigma\sub \sR$. This 
implies that $\cD$ is a smooth divisor of $\cW=\Mbar_{1,2^3}^{1/3}$.
\end{proof}

We illustrate the divisor $\cD$ by a decorated graph in Figure \ref{divisor}
 below.
A generic point in $\cD$ consists a nodal curve with a genus one component (in blue) and a genus zero component (in green).
The monodromy along the node is ${1\over 3}$ on the genus-one component and ${2\over 3}$ on the genus-zero component.
Here $h^0=1$ is the rank of $R^0\pi\lsta\sL$ restricted on the genus-one component.
\begin{figure}[H]
\centering
\includegraphics[width=0.35\textwidth]{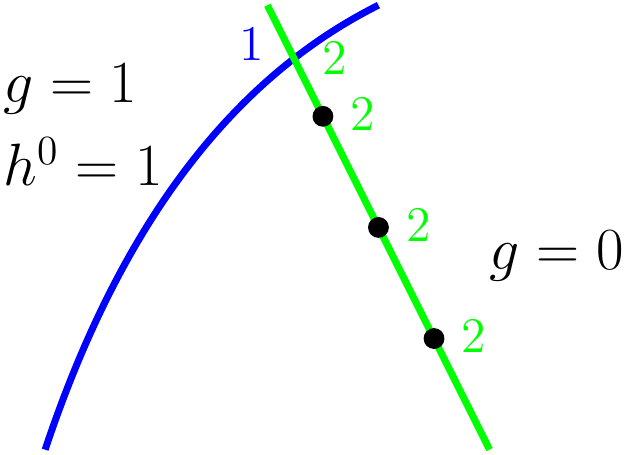}
\caption[divisor]{The divisor $\mathcal{D}$ on the moduli stack $\cW=\Mbar_{1,2^3}^{1/3}$.}
\label{divisor}
\end{figure}

Finally, to apply Proposition \ref{prep1}, we need to show that the cosection is fiberwise homogeneous of degree $e=2$. This
follows from the definition of the cosection in \cite{CLL}, and the degree $e$ is $3-1$, where $3$ is the denominator of $1/3$.
Applying Proposition \ref{prep1}, we obtain\footnote{This formula is a special case of a sequence of formulas for moduli of $r$-spin curves, conjectured by Janda \cite{Janda}.}
\begin{cor}\label{cor3-spin-class}
The Witten's top Chern class of the moduli of three-spin curves $\Mbar_{1,2^3}^{1/3}$ is 
\begin{equation}
\label{3-spin-class}
\bigl[\overline{\mathcal{M}}^{1/3,p}_{1,2^{3}}\bigr]\virt
=
-c_{1}(R^{\bullet}\pi_{*}\sL)-3[\cD] .
\end{equation}
\end{cor}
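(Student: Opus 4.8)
The plan is to read off the formula as a direct application of Proposition \ref{prep1} to the geometric data assembled above, once three numerical inputs are pinned down. Recall we have realized $\cM=\Mbar_{1,2^3}^{1/3,p}$ as the zero locus $(\tilde s=0)\sub E_0$ attached to the two-term complex $\cE^\bullet=[\sO_\cW(E_0)\to\sO_\cW(E_1)]$ representing $R^\bullet\pi_*\sL$; the preceding lemma verifies Assumption-I (the locus $\cD$ where $R^0\pi_*\sL$ is nontrivial is a smooth Cartier divisor); the cosection $\sigma$ induced by $\sO_\cW(E_1)\to H^1(\cE^\bullet)$ satisfies Assumption-II; and $[\cM]_\sigma\virt=[\Mbar_{1,2^3}^{1/3,p}]\virt$. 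So the corollary is essentially the substitution of the correct constants into Proposition \ref{prep1}.

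First I would check the rank hypothesis $a_1-a_0=1$ of Proposition \ref{prep1}. Since $\cE^\bullet$ computes $R^\bullet\pi_*\sL$, the integer $a_0-a_1$ is the fiberwise Euler characteristic $\chi(\sL)$; a Riemann--Roch computation on the genus-one, three-pointed twisted curve, using $\sL^{\otimes3}\cong\omega_\sC^{\log}$ together with the prescribed monodromy data at the markings, gives $\chi(\sL)=-1$, so $a_1-a_0=1$. As a consistency check, the resulting class then lies in $A^1\cW$, exactly the degree needed so that its cube in Lemma \ref{cube} lands in $A^3\cW_3\equiv A_0\cW_3$.

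Next I would record the cosection degree. As noted after the preceding lemma, the cosection built in \cite{CLL} is fiberwise homogeneous of degree $e=3-1=2$, the $3$ being the denominator of the $1/3$-spin structure $\sL^{\otimes3}\cong\omega_\sC^{\log}$. Feeding $e=2$ and $a_1-a_0=1$ into Proposition \ref{prep1} gives
\beq
[\cM]_\sigma\virt=-c_1(E_0-E_1)-(e+1)[\cD]=-c_1(E_0-E_1)-3[\cD]\in A^1\cW.
\eeq
Finally, because $\cE^\bullet=R^\bullet\pi_*\sL$ in the derived category, in $K$-theory $[\cE^\bullet]=[E_0]-[E_1]$, so $c_1(E_0-E_1)=c_1(R^\bullet\pi_*\sL)$; substituting $[\cM]_\sigma\virt=[\Mbar_{1,2^3}^{1/3,p}]\virt$ yields \eqref{3-spin-class}.

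The Chow-theoretic bookkeeping here is formal; the genuinely substantive inputs are the two facts imported from \cite{CLL, CL}: that the specific cosection $\sigma$ annihilates $\pi\sta\eta_2(F)|_J$ (Assumption-II) and is homogeneous of degree $e=2$. I expect the identification $e=3-1$ to be the main point, since it is precisely what turns the abstract coefficient $e+1$ of Proposition \ref{prep1} into the $3$ in front of $[\cD]$ in \eqref{3-spin-class}; once that is granted, the corollary reduces to the substitution displayed above.
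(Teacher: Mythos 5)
Your proposal is correct and takes essentially the same route as the paper: verify Assumption-I (the lemma on $\cD$), Assumption-II and the homogeneity degree $e=3-1=2$ for the cosection of \cite{CLL}, then substitute into Proposition \ref{prep1} and identify $c_1(E_0-E_1)$ with $c_1(R^{\bullet}\pi_{*}\sL)$. The only difference is your explicit Riemann--Roch verification that $\chi(\sL)=-1$, hence $a_1-a_0=1$, a point the paper leaves implicit in its appeal to a ``standard argument'' for the existence of the two-term complex.
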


%From three spin to Fermat cubic pair $(W_3, \mu_3 )$}

%\subsection{Calculation}
%Use \cite[Definition 4.2.1]{Fan:2013} and \cite[Theorem 5.6]{CLL}, we see 
%This invariant can be calculated by using \eqref{W3} and \eqref{3-spin-class}.

%\begin{proof}[Proof of Proposition \ref{propinitialinvariant}]
Applying Lemma \ref{cube}, we get
\begin{equation}
\label{three-spin-class}
\Theta_{1,3}=\deg [\Mbar_{1,2^3}(W_3,\mu_3)^{p}]\virt=\deg \bl\bigl[\overline{\mathcal{M}}^{1/3,p}_{1,2^{3}}\bigr]\virt\br^3.
%\int_{[\Mbar_{1,2^3}(W_3,\mu_3)^{p}]\virt}1=\int_{\bl\bigl[\overline{\mathcal{M}}^{1/3,p}_{1,2^{3}}\bigr]\virt\br^3}1.
%[\Mbar_{1,2^3}(W_3,\mu_3)]\virt}1.
\end{equation}
Thus the FJRW invariant $\Theta_{1,3}$ in Proposition \ref{propinitialinvariant}  can be calculated explicitly from the triple self-intersection of the cycle \eqref{3-spin-class}. 
Note that the first term in \eqref{3-spin-class} can be calculated by Chiodo's formula \cite{Chiodo: 2008}. 
The calculation is subtle and lengthy. The details  are given in \cite{LLSZ}.
An alternative approach in computing this invariant using the Mixed-Spin-P fields method developed in \cite{CLLL15, CLLL16} is also  presented in \cite{LLSZ}.

\section{LG/CY correspondence for the Fermat cubic}
\label{secLGCY}

This section is devoted to proving Theorem \ref{main-theorem}.
We shall show that the GW/FJRW correlation functions as Fourier/Taylor expansions of the same quasi-modular form
around different points (the infinity cusp and an interior point on the upper-half plane) which are related
by the so-called holomorphic Cayley transformation that we shall introduce.

\subsection{Cayley transformation and elliptic expansions of quasi-modular forms}
\label{secintroCayley}

%Now we  relate the functions in \eqref{GW-function} and \eqref{FJRW-function} using holomorphic Cayley transformations of quasimodular forms.
It is well known that the Eisenstein series $E_2(\tau)$ is not modular, however its non-holomorphic modification
\begin{equation}
\label{e2-completion}
\widehat{E}_2(\tau, \bar\tau):=E_2(\tau)-{3\over \pi\, {\rm Im}(\tau)}
\end{equation}
is modular. The map (called modular completion) sending $E_2$ to $\widehat{E}_2$, and $E_4, E_6$ to themselves is
an isomorphism from $\widetilde{M}_*(\Gamma)$ to the \emph{ring of almost-holomorphic modular forms}
\begin{equation}
\widehat{M}_*(\Gamma):=\mathbb{C}[\widehat{E}_2, E_4, E_6]\,.
\end{equation}
More precisely, for any quasi-modular form $f(\tau)\in \widetilde{M}_*(\Gamma)$ of weight $k$,
we denote by $\widehat{f}(\tau,\bar\tau)\in \widehat{M}_*(\Gamma)$ its modular completion.
The function $\widehat{f}$ can be regarded as a polynomial in the
formal variable $1/ \mathrm{Im}\tau$
\begin{equation}\label{eqnmodularcompletion}
\widehat{f}=f+\sum_{j=1}^{k} f_{j}\cdot \left({1\over \mathrm{Im} \tau}\right)^{j},
\end{equation}
with coefficients some holomorphic functions $f_{j}$, $j=1,2,\cdots k$, in $\tau$.
We call the inverse of the modular completion the \emph{holomorphic limit}:
it maps the almost-holomorphic modular form $\widehat{f}$ in
 \eqref{eqnmodularcompletion}
to its degree zero term $f$ in the formal variable $1/ \mathrm{Im}\tau$.

For any point $\tau_*\in \mathbb{H}$, we form the Cayley transform from $\mathbb{H}$ to a
disk $\mathbb{D}$ (of appropriate radius
determined by $\tau_{*}$ and $c\neq 0$)
\begin{equation}
\label{eqnCayleytransform}
\tau \mapsto s(\tau):=c\cdot2\pi\sqrt{-1}(\tau_*-\bar{\tau}_*){\tau-\tau_*\over \tau-\bar{\tau}_*}\in\mathbb{D}\,.
\end{equation}
It is biholomorphic and we denote its inverse by $\tau(s)$.

Following \cite{Zagier:2008}, \cite{Shen:2016} defined a Cayley transformation $\mathscr{C}_{\tau_{*}}$
based on the action \eqref{eqnCayleytransform} on the space of almost-holomorphic modular forms:
it maps the almost-holomorphic modular form $\widehat{f}\in \widehat{M}_*(\Gamma)$ to  %, which maps each $\widehat{f}(\tau,\bar\tau)\in \widehat{M}(\Gamma)$ to
\begin{equation}
\label{eqncayleytransformation}
\mathscr{C}_{\tau_{*}}(\widehat{f})(s, \overline{s}):=( 2\pi\sqrt{-1} c)^{-{k\over 2}} \cdot \left({\tau(s)-\bar\tau_{*} \over \tau_{*}-\bar\tau_{*} }\right)^k \cdot  \widehat{f}(\tau(s),\overline{\tau}(s))\,.
\end{equation}
This gives a natural way to expand an almost-holomorphic modular form near $\tau=\tau_*$. \\

A similar notion of holomorphic limit can be defined near the interior point $\tau_{*}$.
Computationally, this amounts to
taking the degree zero term in the $\bar{s}$-expansion of \eqref{eqncayleytransformation} (now regarded as a real-analytic function in $s,\bar{s}$)
using the structure \eqref{eqnmodularcompletion}.
\iffalse
%%%%%
 replacing
the formal variable $1/ \mathrm{Im}\tau$ in the expression \marginpar{I do not understand this paragraph.}
(which is obtained by plugging \eqref{eqnmodularcompletion}
in  \eqref{eqnCayleytransform}) by the function
\begin{equation*}
{2i\over \tau(s)-\bar{\tau}_{*} }=\left(1-{s \over 2\pi i (\tau_{*} -\bar{\tau}_{*})}\right) {1\over \mathrm{Im}\tau_{*}}\,.
\end{equation*}
%%%%%
\fi
This procedure induces a transformation $\mathscr{C}_{\tau_*}^{\rm hol}$ on quasi-modular forms.
The transformation $\mathscr{C}_{\tau_*}^{\rm hol}$ will be called the {\em holomorphic Cayley transformation} in the present work.
This transformation can be shown to respect the
differential ring isomorphism between the differential ring of quasi-modular forms and the differential ring of almost-holomorphic modular forms.
We illustrate the construction by the commutative diagram Figure  \ref{figureinvarianceRamanujan} below.
Interested readers are referred to \cite{Shen:2016} for details.

%See Figure  \ref{figureinvarianceRamanujan} for an illustration.
\begin{figure}[H]
  \renewcommand{\arraystretch}{1}
\begin{displaymath}
\xymatrixcolsep{4pc}\xymatrixrowsep{4pc}\xymatrix{  \widetilde{M}(\Gamma)
\ar@/^/[r]^{\textrm{modular completion}}\ar@{.>}[d]^{\mathscr{C}_{ \tau_*}^{\rm hol} }
 &  \ar@/^/[l]^{\textrm{constant term map}}  \widehat{M}(\Gamma) \ar[d]^{\mathscr{C}_{\tau_*}} %\curvearrowleft \widehat{\partial}_{\tau}
 \\
 \mathscr{C}^{\rm hol}_{\tau_{*}}(\widetilde{M}(\Gamma)) \ar@/^/[r]^{\textrm{modular completion}}&  \ar@/^/[l]^{\textrm{holomorphic limit}}  \mathscr{C}_{\tau_*}(\widehat{M}(\Gamma))
 %\curvearrowleft  \widehat{\partial}_{s}
 }
\end{displaymath}
 \caption[InvarianceofRamanujan]{Cayley transformation on quasi-modular and almost-holomorphic modular forms.}
  \label{figureinvarianceRamanujan}
\end{figure}

\iffalse

As we shall see below, for the LG space $([\mathbb{C}^3/\langle J_\delta\rangle], W)$ given by
 \eqref{weight-one} and \eqref{fermat-elliptic},
both the GW correlation functions
%$\LL\cdots\RR_{g,n}^{\E_d}$
 in \eqref{GW-function} and the FJRW correlation functions
 %$\LL\cdots\RR_{g,n}^{W_d}$
 in \eqref{FJRW-function} are essentially determined by the following equation called Chazy equation for $f(z)$
 \begin{equation}
\label{chazy}
2f'''-2ff''+3f'^2=0\,.
\end{equation}
%Acting on the solution $E_2$ by the group SL2 gives a 3-parameter family of solutions.
%Its solution must be a quasi-modular form of weight 2? Cite?
%\comment{See \cite{Dubrovin:1996} for an exposition on various renewed perspectives of this equation.}
 For the GW generating series as a formal power series in $q$, the derivative $'$ is given by $q\partial_{q}$, while
 for the FJRW generating series as a formal power series in $s$, the derivative $'$ is given by $\partial_{s}$.
 \begin{prop}
\label{main-lemma}
Consider the LG space $([\mathbb{C}^3/\langle J_\delta\rangle], W)$ given by \eqref{weight-one} and \eqref{fermat-elliptic}.
 \begin{enumerate}
 \item [(a).]
Both genus-one GW correlation function $-24\LL\omega\RR_{1,1}^{\E}(q)$  and  genus-one FJRW correlation function
$-24\LL\phi\RR^{W, \langle J_\delta\rangle}_{1,1}(s)$ are solutions of the Chazy equation \eqref{chazy}.
%\comment{I am confused here. My understanding is this c should be $1$}.
\item [(b).]
Let $f=-24\LL\omega\RR_{1,1}^{\E}$ (or $-24\LL\phi\RR^{W, \langle J_\delta\rangle}_{1,1}$), then the GW functions
$\LL\cdots\RR_{g,n}^{\E}$ (or  the FJRW functions $\LL\cdots\RR_{g,n}^{W, \langle J_\delta\rangle}$)  are elements in the ring $\mathbb{C}[f, f', f'']$.
%\subset^?\widetilde{M}(\Gamma)$.
%$$\LL\RR_{g,n}^{\E_d}, \LL\RR_{g,n}^{W_d}\in \mathbb{C}[f_d, f'_d, f''_d]\subset^?\widetilde{M}(\Gamma)$$
\end{enumerate}
\end{prop}
The key here is to use Belorousski-Pandharipande's relation \cite{Belorousski:2000} to prove part (a).\\

\fi

In this work we are mainly concerned with the expansions of the quasi-modular form $E_{2}$
around the infinity cusp $\sqrt{-1}\infty$ and the elliptic points
\begin{equation}
\label{eqnellipticpoint}
\tau_{*}=-{1\over 2\pi \sqrt{-1}}\cdot {\Gamma\left({1\over d}\right) \Gamma\left(1-{1\over d}\right)} e^{-{\pi\sqrt{-1}\over d}}\,,
\quad d\in \{3,4,6\}\,.
 \end{equation}
\iffalse
Its Cayley transformation is exactly given by the $\mathrm{SL}_{2}(\mathbb{C})$-action in
\eqref{eqnSL2symmetryofChazy}.
\fi
For the Fermat cubic polynomial case $d=3$,
in \eqref{eqnCayleytransform} we take
\begin{equation}
\label{eqnchoiceforc}
c={1\over 2\pi \sqrt{-1}} {\Gamma({1\over d}) \over \Gamma(1-{1\over d})^2} e^{ -{\pi \sqrt{-1}\over d}}\,.
\end{equation}
The choices in \eqref{eqnellipticpoint} and \eqref{eqnchoiceforc} then lead to the following rational expansion of $E_{2}$ around
$\tau_{*}$
\begin{equation}
\label{eqnellipticexpansionsofE2}
%d=3: &&
\mathscr{C}_{\tau_{*}}^{\mathrm{hol}}(E_{2})=-\frac{s^2}{ 9}-\frac{s^5}{1215}-\frac{s^8}{459270}+\cdots
%d=4: && \mathscr{C}_{\tau_{*}}(E_{2})=-\frac{s}{4}-\frac{s^3}{64}-\frac{s^5}{5120}+\cdots\nonumber\\
%d=6: && \mathscr{C}_{ \tau_{*}}(E_{2})=-\frac{s^2}{9\cdot 2^2}-\frac{s^5}{1215\cdot 2^4}+\cdots
\end{equation}
%The expansions for $d=3$ and $d=6$ are in fact equivalent by the scaling $s\mapsto 2^{2\over 3} s$. Interested readers are referred to \cite{Shen:2016} for details on the computations.\\
The other cases $d=4,6$ are similar.
All of these computations are easy following those in \cite{Shen:2016}.

\subsection{LG/CY correspondence}
\label{secexpansionofE2}

We consider the elliptic points \eqref{eqnellipticpoint} and the value \eqref{eqnchoiceforc}
for $c$ in \eqref{eqnCayleytransform}.
Theorem \ref{main-theorem} then follows from Theorem \ref{mainthm2} below.

\iffalse
By a case-by-case analysis for each $(d,\delta)$ in \eqref{weight-one},
we show that there exists a Cayley transformation $\mathscr{C}_{\tau_{*}}$,
depending on some $\tau_{*}$, such that
$\mathscr{C}_{\tau_{*}}(-24\LL\omega\RR_{1,1}^{\E}(q))$
satisfies the same initial conditions as $-24\LL\phi\RR^{W,\langle J_\delta\rangle}_{1,1}(s)$.
Combined with the $ \mathrm{SL}_{2}(\mathbb{C})$-invariance of the Chazy equation,
we obtain the following result.
 \fi

\begin{thm}
\label{mainthm2}
Consider the LG space $([\mathbb{C}^3/\langle J_\delta\rangle], W)$ given by \eqref{weight-one} and \eqref{fermat-elliptic}, with $d=3$.
 \begin{enumerate}[(i)]
 \item
The genus-one GW correlation function is
\begin{equation}
\label{genus-1-gw}
-24\LL\omega\RR_{1,1}^{\E_{d}}(q)=E_2(q)\,.
\end{equation}
%coincides with the Fourier expansion of $E_{2}$ around the cusp $q=0$.

\item The GW correlation functions
$\LL\cdots\RR_{g,n}^{\E_{d}}$ are quasi-modular forms in the ring $\mathbb{C}[E_{2},E_{2}',E_{2}'']$.

\item
The genus-one FJRW correlation function $\LL\phi\RR^{W_{d}}_{1,1}(s)$ is
the Taylor expansion of $-{1\over 24}\cdot E_{2}$ around the elliptic point 
$$\tau_{*}=%-(\sqrt{-1}/3) \exp(2\pi \sqrt{-1}/3)
-{\sqrt{-1}\over \sqrt{3}} \exp({2\pi \sqrt{-1}\over 3})
\in \mathbb{H}.$$
That is,
 \begin{equation}
\LL\phi\RR_{1,1}^{W_d}(s)=\mathscr{C}_{  \tau_{*}}^{\rm hol}\left(\LL\omega\RR_{1,1}^{\E_d}(q)\right)\,.
\end{equation}

%\textcolor{red}{Now we can use the explicit $\tau_*$.}
 \item
The FJRW correlation functions $\LL\cdots\RR_{g,n}^{W_{d}}$
are holomorphic Cayley transformations of quasi-modular forms in the ring
$$\mathbb{C}[\mathscr{C}_{ \tau_{*}}^{\rm hol}(E_{2}),\mathscr{C}_{ \tau_{*}}^{\rm hol}(E_{2}'),\mathscr{C}_{ \tau_{*}}^{\rm hol}(E_{2}'')],$$ such that
 \begin{equation*}
 \mathscr{C}_{ \tau_* }^{\rm hol}\left(\LL\alpha_1\psi_1^{\ell_1}, \cdots, \alpha_n\psi_n^{\ell_n}\RR_{g,n}^{\E_{d}}(q)\right)
=\LL\Psi(\alpha_1)\psi_1^{\ell_1},\cdots, \Psi(\alpha_n)\psi_n^{\ell_n}\RR_{g,n}^{W_{d}}(s)\,.
 \end{equation*}
%\subset^?\widetilde{M}(\Gamma)$.
%$$\LL\RR_{g,n}^{\E_d}, \LL\RR_{g,n}^{W_d}\in \mathbb{C}[f_d, f'_d, f''_d]\subset^?\widetilde{M}(\Gamma)$$
\end{enumerate}
\end{thm}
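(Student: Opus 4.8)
The plan is to dispatch parts (i) and (ii) from the reconstruction of Section~\ref{secreconstruction} together with the Ramanujan identities, to prove the genuinely new part (iii) by a Chazy-uniqueness argument matching three initial values, and then to deduce part (iv) formally from (iii) and the fact that $\mathscr{C}_{\tau_*}^{\rm hol}$ is a homomorphism of differential rings. Part (i) is the classical evaluation \eqref{genus-one-elliptic}, $-24\LL\omega\RR_{1,1}^{\E_d}(q)=E_2(q)$. For part (ii), Proposition~\ref{main-lemma2} exhibits every GW correlation function as an element of $\C[f,f',f'']$ with $f=-24\LL\omega\RR_{1,1}^{\E_d}=E_2$, so it suffices to identify $\C[E_2,E_2',E_2'']$ with the ring of quasi-modular forms $\C[E_2,E_4,E_6]$. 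This is immediate from \eqref{ramanujan}: the first identity writes $E_4=E_2^2-12E_2'$, and differentiating it once more and substituting \eqref{ramanujan} again writes $E_6$ as a polynomial in $E_2,E_2',E_2''$.

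For part (iii) I would set $g(s):=-24\LL\phi\RR_{1,1}^{W_d}(s)$ and $h(s):=\mathscr{C}_{\tau_*}^{\rm hol}(E_2)$. By Proposition~\ref{main-lemma1} the function $g$ solves the Chazy equation \eqref{chazy}; and since $E_2$ solves \eqref{chazy} while $\mathscr{C}_{\tau_*}^{\rm hol}$ is a ring homomorphism intertwining $q\partial_q$ with $\partial_s$, the image $h$ solves \eqref{chazy} as well (equivalently, this is the $\mathrm{SL}(2,\C)$-invariance of Chazy under the Cayley change of variable). Writing \eqref{chazy} as $f'''=ff''-\tfrac32(f')^2$ displays it as a third-order ODE in normal form, whose holomorphic solution near $s=0$ is uniquely determined by $(f(0),f'(0),f''(0))$, so it remains only to match these three numbers. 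On the FJRW side, $\LL\phi\RR_{1,1}^{W_d}(s)=\sum_{m\ge 0}\tfrac{s^m}{m!}\Theta_{1,1+m}$, and the selection rule for the $\mu_3$-spin structure (the spin bundle has coarse degree $\tfrac{2g-2+n}{3}-n\cdot\tfrac23=-\tfrac{n}{3}$, an integer only when $3\mid n$) forces $\Theta_{1,n}=0$ unless $3\mid n$; hence $g(0)=g'(0)=0$ and $g''(0)=-24\,\Theta_{1,3}=-\tfrac29$ by Proposition~\ref{propinitialinvariant}. On the GW side, the choices \eqref{eqnellipticpoint} and \eqref{eqnchoiceforc} produce the expansion \eqref{eqnellipticexpansionsofE2}, $h(s)=-\tfrac{s^2}{9}+O(s^3)$, with the same three initial values; indeed its very shape, a series in $s^2,s^5,s^8,\dots$, mirrors the vanishing pattern of the $\Theta_{1,n}$. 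Thus $g=h$, which is part (iii) after dividing by $-24$.

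Finally, for part (iv) I would run the uniform reconstruction of Proposition~\ref{main-lemma2} on both theories simultaneously. Under the isomorphism $\Psi$ of \eqref{ring-iso} the genus-zero data agree by \eqref{pairing-with-sign} and \eqref{primary-fjrw-potential}, and the $g$-reduction together with the splitting axiom express a $\Psi$-matched pair of correlation functions as one and the same universal polynomial $P$ in the genus-one generator and its derivatives, namely $\LL\alpha_1\psi_1^{\ell_1},\dots,\alpha_n\psi_n^{\ell_n}\RR_{g,n}^{\E_d}=P(E_2,E_2',E_2'')$ and $\LL\Psi(\alpha_1)\psi_1^{\ell_1},\dots,\Psi(\alpha_n)\psi_n^{\ell_n}\RR_{g,n}^{W_d}=P(\tilde f,\tilde f',\tilde f'')$ with $\tilde f=-24\LL\phi\RR_{1,1}^{W_d}$. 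Applying the ring homomorphism $\mathscr{C}_{\tau_*}^{\rm hol}$ to the first identity, using that it intertwines the two derivatives and that $\mathscr{C}_{\tau_*}^{\rm hol}(E_2)=\tilde f$ by part (iii), yields exactly the asserted equality. The main obstacle is the two numerical inputs to part (iii): the boundary evaluation $\Theta_{1,3}=1/108$ (Proposition~\ref{propinitialinvariant}, established by cosection localization in Section~\ref{secgenusoneinitialvalues}) and the leading Taylor coefficient in \eqref{eqnellipticexpansionsofE2}, which is sensitive to the precise elliptic point \eqref{eqnellipticpoint} and the normalization \eqref{eqnchoiceforc}; granting the differential-ring properties of $\mathscr{C}_{\tau_*}^{\rm hol}$ from \cite{Shen:2016}, everything else is formal.
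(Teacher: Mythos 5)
Your proposal is correct and takes essentially the same route as the paper's proof: part (ii) from Proposition \ref{main-lemma2} plus the Ramanujan identities, part (iii) by Chazy-equation uniqueness after matching the three initial values $\Theta_{1,1}=\Theta_{1,2}=0$ (selection rule) and $\Theta_{1,3}=1/108$ against the expansion \eqref{eqnellipticexpansionsofE2}, and part (iv) from the universality of the $g$-reduction together with the differential-ring property of $\mathscr{C}_{\tau_*}^{\rm hol}$. The only (harmless) deviation is part (i), where you cite the classical evaluation \eqref{genus-one-elliptic} of Okounkov--Pandharipande, whereas the paper re-derives it from the Chazy equation together with the two initial invariants $\LD\omega\RD_{1,1,0}^{\E_d}=-1/24$ and $\LD\omega\RD_{1,1,1}^{\E_d}=1$.
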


\begin{proof}
Part (i) is a well-known result in the literature, see e.g. \cite{Okounkov:2006}. We give a new proof based on the Chazy equation.
%%%
\iffalse
Let $H\in H^2(\E_3)$ be the pullback of hyperplane class of $\mathbb{P}^2$ from the embedding $\E_3\hookrightarrow \mathbb{P}^2$, then $H=3\omega$.
Let $t_H$ be the corresponding complexified parameter and let $Q=e^{ t_H}$. Then
$q=Q^3$, and we can write
 \begin{equation*}
 \LL\omega\RR_{1,1}^{\E_{3}}\in \C[\![Q^3]\!]\,.
  \end{equation*}
%\comment{explaining the relation between $q,Q$ and hence how the initial invariants should be read off is great. But translating the Chazy equation into differential equation in $Q$ is not good since  one can not prove the translated equation by other means but have to rely on the BP relation anyway.}
Let
$\theta_Q=Q{d \over d Q}$,
the Chazy equation \eqref{chazy} for $\LL\omega\RR_{1,1}^{\E_{d}}(q)$ becomes\footnote{The equation below can be obtained directly by integrating $\Lambda_{2,3}^{\E_3}(H, H, H)$ over the Belorousski-Pandharipande relation.}
 \begin{equation*}
 \theta_Q^3\LL\omega\RR_{1,1}^{\E_{d}}=3\LL\omega\RR_{1,1}^{\E_{d}}\theta_Q^2\LL\omega\RR_{1,1}^{\E_{d}}-{9\over 2}\left(\LL\omega\RR_{1,1}^{\E_{d}}\right)^2\,.
  \end{equation*}
  \fi
  %%%
In order to get \eqref{genus-1-gw}, it suffices to check\footnote{Note that only two initial conditions are
needed to determine a solution from the space of formal power series in $q=e^{2\pi i \tau}$.}
 \begin{equation*}
 \LD\omega\RD_{1,1,0}^{\E_{d}}=-{1\over 24}\,, \quad \LD\omega\RD_{1,1,1}^{\E_{d}}=1\,.
  \end{equation*}
Both invariants can be obtained by analyzing the virtual fundamental classes explicitly.

\

Part (ii) is a consequence of Part (i), the Ramanujan identities \eqref{ramanujan}, and Proposition \ref{main-lemma2}.

\

For part (iii), the Selection rule \cite[Proposition 2.2.8]{Fan:2013} implies $\Theta_{1,1}=\Theta_{1,2}=0$ as the corresponding moduli spaces are empty.
On the other hand, according to
Proposition \ref{propinitialFJRWinvariant}, $$\Theta_{1,3}={1\over 108}.$$
Now we see that as a formal power series in $s$, the first three terms of $\LL \phi \RR_{1,1}^{W_d}(s)$ matches with those obtained from $\mathscr{C}_{\tau_{*}}^{\rm hol}(E_{2})$ in \eqref{eqnellipticexpansionsofE2}.
Since both $\LL \phi \RR_{1,1}^{W_d}(s)$ and $\mathscr{C}_{\tau_{*}}^{\rm hol}(E_{2})$ satisfies the Chazy equation \eqref{chazy}, %this uniquely determines  $\LL \phi \RR_{1,1}^{W_d}(s)$ and
we conclude that
\begin{equation*}
\LL \phi \RR_{1,1}^{W_d}(s)=-{1\over 24}\mathscr{C}_{\tau_{*}}^{\rm hol}(E_{2})\,.
\end{equation*}

\

For part (iv), we recall that by $g$-reduction, in either theory all non-trivial correlation functions
are differential polynomials in the building block $\LL\omega\RR_{1,1}^{\E_d}(q)$
or $\LL\phi\RR_{1,1}^{W_d}(s)$.
Since the holomorphic Cayley transformation respects the differential ring structure
and the $g$-reduction is independent of the CohFT in consideration, part (iv) is a consequence of part (iii), the Ramanujan identities \eqref{ramanujan}, and Proposition \ref{main-lemma2}.
\end{proof}
\begin{rem}

%The relation between the correlation function and Givental's I-functions is discussed in Appendix \ref{appendixIfunction}.
Proposition \ref{main-lemma1} and Proposition \ref{main-lemma2} hold for
all of the one-dimensional CY weight systems in  \eqref{weight-one}
 and  \eqref{fermat-elliptic}.
Provided the analogue of Proposition
\ref{propinitialFJRWinvariant}
for the $d=4$ or $6$ case is obtained, the same argument in the proof of
Theorem \ref{mainthm2}
generalizes straightforwardly.

\end{rem}

\section{Ancestor GW invariants for elliptic curves}
\label{secapplications1}

%Virasoro constraints, holomorphic anomaly equations, and higher genus invariants
\label{application}

The tautological relations used in establishing Proposition \ref{main-lemma2} are not constructive and hence
not so useful for actual calculation of  higher-genus invariants.
For this reason, we make use of the
beautiful formulae for the descendent GW invariants of elliptic curves
given by Bloch-Okounkov \cite{Bloch:2000} reviewed below. 
For later use we also discuss the ancestor/descendent correspondence.

%\subsection{Higher-genus FJRW invariants}

\iffalse
For ancestor invariants, we use the variables $t^0_{k}, s^0_{k}, s^1_{k}, t^1_{k}$ to parametrize the insertions  $\one\psi^k, \alpha\psi^k, \beta\psi^k, \omega\psi^k$ respectively.
We define {\em ancestor Virasoro operators} $L_k$ by replacing the variables  $\widetilde{t}^0_{k}, \widetilde{s}^0_{k}, \widetilde{s}^1_{k}, \widetilde{t}^1_{k}$ in \eqref{viraroso-op} by the variables $t^0_{k}, s^0_{k}, s^1_{k}, t^1_{k}$. In particular, when $k=0$. Two subsets of variables are identical.
\begin{eqnarray}
L_k=&-&(k+1)!{\partial\over\partial t_{k+1}^0}\nonumber\\
&+&\sum_{\ell\geq0}\left((\ell)_{k+1}t_{\ell}^0 {\partial\over t_{k+\ell}^0}+(\ell+1)_{k+1}t_\ell^1{\partial\over \partial t_{k+\ell}^{\ell}}\right)\label{viraroso-op}\\
&+&\sum_{\ell\geq0}\left((\ell+1)_{k+1}s_{\ell}^{0} {\partial\over s_{k+\ell}^{0}}+(\ell)_{k+1}s_\ell^1{\partial\over \partial s_{k+\ell}^{1}}\right)\nonumber
\end{eqnarray}
\fi

 \subsection{Higher-genus descendent GW invariants of elliptic curves}
 \label{sechighergenusdescendentGW}

In \cite{Okounkov:2006}, Okounkov and Pandharipande proved a correspondence between the stationary GW invariants and Hurwitz covers, called Gromov-Witten/Hurwitz correspondence.
\iffalse
\begin{equation}
\label{GW/H}
\langle \prod_{i=1}^n P\psi_{i}^{k_i}\rangle_{g,d}^{\bullet \E,\cD}=\frac{1}{\prod k_i!}H_d^{E}\left(\overline{(k_1+1)}, \dots, \overline{(k_n+1)}\right)
\end{equation}
Here $\overline{(k_i+1)}$ is the completed cycle and $\bullet$ stands for the disconnected invariants.

The computation in Hurwitz theory yields a nice formula expressing the disconnected, stationary, descendent GW invariants in terms of shifted symmetric functions
\begin{equation}
\label{gw-q-bracket}
\LL \prod_{i=1}^n \omega\wpsi_{i}^{\ell_i}\RR_{g}^{ \bullet \E,\cD}=\sum_{d=0}^{\infty}q^d\sum_{\lambda\vdash d}
\prod_{i=1}^n \frac{{\bf p}_{\ell_i+1}(\lambda)}{(\ell_i+1)!}.
\end{equation}
The invariants is called {\em stationary} as the integral only involves the descendents of $\omega$.
Here in \eqref{gw-q-bracket}, $\bullet$ stands for the disconnected invariants,  $\wpsi_{i}$ are descendent classes, $\lambda\vdash d$ means $\lambda=(\lambda_1, \cdots)$ is a partition of the integer $d$. Let  $\zeta(-k)$
 be zeta values, the $k$-th {\em shifted symmetric power sum} ${\bf p}_k$ is given by
 \begin{equation*}
{\bf p}_k(\lambda)=\sum_{i=1}^{\infty}[(\lambda_i-i+{1\over2})^k-(-i-{1\over2})^k]+(1-2^{-k})\zeta(-k)\,.
\end{equation*}
\fi
To be more precise, let $\LD \prod_{i=1}^N \omega\wpsi_{i}^{\ell_i}\RD_{g,d}^{ \bullet \E}$ be the disconnected, {\em stationary}, descendent GW invariant of genus $g$ and degree $d$ (the number $N$ of markings is self-explanatory in the notation). Here $\wpsi_{i}$ is the descendent cotangent line class attached to the $i$th marking, and the symbol $\bullet$ stands for disconnected counting.
%\item $\cD$ stands for descendent.%, and similarly $\cA$ stands for ancestor in what follows.
The invariant is called {\em stationary} as the insertions only involve the descendents of $\omega$.

Following \cite{Okounkov:2006}, we define the {\em $N$-point generating function}
\begin{equation}
\label{N-point-function}
F_N(z_{1},\cdots, z_{N}, q):=\sum_{\ell_1, \cdots, \ell_N \geq -2}\LL \prod_{i=1}^N \omega\wpsi_{i}^{\ell_i}\RR_{g}^{\bullet\E} \prod_{i=1}^{N} z_{i}^{\ell_i+1}\,,
\end{equation}
with the convention
\begin{equation*}
\LL\omega\wpsi^{-2}\RR_{0}^{\bullet\E}(q)=1\,.
\end{equation*}
The GW/Hurwitz correspondence \cite[Theorem 5]{Okounkov:2006} allows one to rewrite the  $N$-point generating function $F_N(z_{1},\cdots, z_{N}, q)$ by a beautiful character formula from \cite{Bloch:2000}
\begin{equation}\label{eqngeneratingseriesformula}
F_{N}(z_1,z_2,\cdots ,z_N,q)
=(q)_{\infty}^{-1}\cdot\sum_{\text{all permutations of} ~z_1,\cdots, z_{N}}  {\det M_{N}(z_{1},z_{2}\cdots, z_{N})\over   \Theta(z_{1}+z_{2}+\cdots+ z_{N})}\,,
\end{equation}
where  $M_{N}(z_{1},z_{2}\cdots, z_{N})$ is the matrix whose $(i,j), j\neq N$ entries are zero for $i>j+1$ and otherwise are given by
\begin{equation*}
{\Theta^{(j-i+1)}(z_{1}+\cdots+z_{N-j})  \over (j-i+1)! \Theta(z_{1}+\cdots+z_{N-j}) }\,,\,
j\neq N\,,
\quad
{\Theta^{(N-i+1)}(0)   \over (N-i+1)!}\,,\,
j=N\,.
\end{equation*}
\iffalse
The RHS side of \eqref{gw-q-bracket} equals to the coefficients of $\prod\limits_i z_i^{\ell_i+1}$ in the Laurent expansion of the $N$-point generating function $F_N(z_{1},\cdots, z_{N}, q)$.

It is show in \cite{Bloch:2000, Okounkov:2006} that the generating series of disconnected stationary Gromov-Witten invariants, which we shall call $N$-point functions in this work, are expressed beautifully
in terms of theta functions and their derivatives.
To be precise,
let
\begin{equation}\label{eqnFasgeneratngseries}
F_{N}(z_1,\cdots, z_N, q)=\sum_{d\geq 0 }q^{d}  \sum_{k_{i}}\langle \prod_{i=1}^{N} \tau_{k_{i}}(\omega)\rangle_{d} \prod_{i=1}^{N} z_{i}^{k_{i}+1}\,,\quad \tau_{k_{i}}(\omega)=\omega \psi_{i}^{k_{i}}\,,
\end{equation}
where by convention $\tau_{-2}(\omega)=1$ and $\langle \tau_{-2}(\omega) \rangle_{d=0}=1$.
Following our earlier notations in this work, we have
\begin{equation}
\LL \prod_{i=1}^{N} \tau_{k_{i}}(\omega)\RR=\sum_{d }q^{d} \langle \prod_{i=1}^{N} \tau_{k_{i}}(\omega)\rangle_{d} \,.
\end{equation}
These correlation functions are obtained as the Laurent coefficients of $F_{N}(z_{1},\cdots, z_{N}, q)$.
As shown in  \cite{Bloch:2000, Okounkov:2006}, one has
\fi
Recall that $\Theta $ is defined to be the prime form
\begin{equation}\label{eqnprimeform}
\Theta(z)= {\vartheta_{({1\over 2}, {1\over 2} ) }(z,q)\over \partial_{z}\vartheta_{({1\over 2}, {1\over 2} ) }(z,q)|_{z=0}}=
2\pi \sqrt{-1} {\vartheta_{({1\over 2}, {1\over 2} ) }(z,q)\over -2\pi \eta^3}
=2\pi \sqrt{-1} e^{{1\over 24}E_{2}z^2}\sigma(z)\,.
\end{equation}
with
\begin{enumerate}
\item [(i)]
the Euler function 
$$(q)_{\infty}:=\prod_{n=1}^{\infty}(1-q^{n})$$ is related to the Dedekind eta function by
$\eta=q^{1\over 24}(q)_{\infty}$;
\item [(ii)]
 the Jacobi $\theta$-function
$$\vartheta_{({1\over 2}, {1\over 2} ) }(z,q):=\sum_{n\in \mathbb{Z}}q^{\pi \sqrt{-1} (n+{1\over 2} )^2}e^{(n+{1\over 2}) z}$$
has characteristic $({1\over 2}, {1\over 2} )$;
\item [(iii)]
the Weierstrass $\sigma$-function
 $\sigma(z)$ 
satisfies the following well-known formula\footnote{Note that the $z$-variable here differs from the usual one by a $2\pi \sqrt{-1}$ factor.} (see \cite{Silverman:2009arithmetic}),
\begin{equation}\label{eqnlogexpansionofprime}
\sigma(z)={z\over 2\pi  \sqrt{-1} }\exp\left(\sum_{k=2}^{\infty} {B_{2k}\over 2k (2k)!} z^{2k} E_{2k}\right)\,,
\end{equation}
where $B_{2k}, k\geq 1$ are Bernoulli numbers determined from
\begin{eqnarray*}
{x\over e^{x}-1}=1+{x\over 2}+\sum_{k=1}^{\infty}{B_{2k}\over (2k)!}x^{2k}\,.
\end{eqnarray*}

\end{enumerate}
Note that we often omit the subscript $g$ in the correlation function 
$$\LL \prod_{i=1}^N \omega\wpsi_{i}^{\ell_i}\RR_{g}^{\bullet \E}$$
which can be read off from the degree of the insertion according to the dimension axiom, we shall also omit the
 argument $q$ in the functions for ease of notation.\\

The formula \eqref{eqngeneratingseriesformula} provides an effective algorithm in computing the
stationary descendent GW invariants.
For example,
as already computed in \cite{Bloch:2000}, one has
\begin{equation}\label{eqnfirstfewNpointfunctions}
\begin{aligned}
F_{1}(z_1)&={1\over (q)_{\infty}\Theta(z_1)}\,,\quad\\
F_{2}(z_1,z_2)&={1\over  (q)_{\infty}\Theta(z_1+z_2)} (\partial_{z_{1}} \ln\Theta(z_1) +\partial_{z_{2}} \log \Theta(z_2))\,,\quad\\
&\cdots
\end{aligned}
\end{equation}

\begin{rem}\label{remArtinstack}
Let $ \LL \omega\RR^{\circ \E }$ be the generating series of stable maps with
connected domains with no descendent nor ancestor classes. Then one has the well known formula
\begin{equation}\label{eqnconnectedgenusonecorrelationfunction}
\LL \omega\RR^{\circ \E}= -{1\over 24} E_{2}\,.
\end{equation}
It is easy to see that
\begin{equation}
\LL \omega\RR^{\bullet \E}=
\LL \omega\RR^{\circ \E}\cdot \exp(G(q))\,,
\quad
G=\sum_{d\geq 1}\LL \RR^{\circ \E}_{g=1,d } q^{d}\,.
\end{equation}
One can show in this case by enumerating stable maps with
connected domains that
\begin{equation}
q{d\over dq} G=\sum_{d\geq 1}\LL \omega \RR^{\circ\E}_{g=1,d } q^{d}
=-q{d\over dq}\log (q)_{\infty} \,.
\end{equation}
Solving this equation and using the initial terms of $G$ which can be easily computed, one obtains
\begin{equation}\label{eqnartinstack}
G=-\log (q)_{\infty}\,.
\end{equation}
This then gives
\begin{equation}
\LL \omega\RR^{\bullet\E}= (q)_{\infty}^{-1}\cdot
\LL \omega\RR^{\circ\E}
=(q)_{\infty}^{-1} \cdot -{1\over 24}E_{2}\,.
\end{equation}
More generally, for the one-point GW correlation function, the same reasoning implies that
\begin{equation*}
\LL \omega \wpsi^{k}\RR^{\bullet\E}=(q)_{\infty}^{-1}\cdot
\LL \omega \wpsi^{k}\RR^{\circ\E}\,.
\end{equation*}

The result \eqref{eqnartinstack}, indicates that one can add an extra contribution from the degree zero
part to $G$, whose corresponding moduli is an Artin stack. This contribution can be defined to be $\log q^{-{1\over 24}}$.
In this way, after applying the divisor equation,
it yields the contribution $-1/24$ for the degree zero part in $\LL \omega \RR^{\circ\E} $.
This definition of the extra contribution for the Artin stack changes $(q)_{\infty}$ to $\eta$.
What one gains from the inclusion of this is the quasi-modularity of the GW generating functions.
The discrepancy will be further discussed from the viewpoint of ancestor/descendent correspondence below.
\end{rem}

%%%%%
\iffalse
The following observation is easy but very useful.
\begin{lem}\label{lempolynomiality}
For $(i,j),j\neq N$, the entry of the matrix $M_{N}$
is a polynomial in
$\partial^{m}\ln \Theta(z_{1}+\cdots+z_{N-j}), m\leq j-i+1  $
with rational coefficients.
\end{lem}
\begin{proof}
The proof is elementary and therefore omitted.
\end{proof}
\fi
%%%%%

It is shown in \cite{Bloch:2000} by manipulating the series expansions that
the descendent GW correlation functions are essentially (modulo the issue discussed in Remark \ref{remArtinstack}) quasi-modular forms.
By induction, the weight of $(q)_{\infty}\cdot\LL \prod_{i=1}^{N} \omega \wpsi_{i}^{k_i}\RR^{\bullet\E}$
is $\sum (k_{i}+2)$.
This can also be seen easily by using
\eqref{eqnprimeform}, \eqref{eqnlogexpansionofprime}.

\subsection{Ancestor/descendent correspondence}

Since explicit formulae in \cite{Bloch:2000} are available only for descendent GW invariants while
we are mainly concerned with  ancestor GW invariants, we shall first exhibit the relation between these two types of GW invariants.
The relation between the descendent GW invariants and the ancestor GW invariants are described for general targets in \cite[Theorem 1.1]{Kontsevich:1998}.
%, Kontsevich and Manin found a correspondence between the total ancestor potential and the total descendent potential of the same target $X$.
%Givental reformulate the correspondence using quantization operators.
This is the so-called {\em ancestor/descendent correspondence}.
This correspondence is written down elegantly using a quantization formula of quadratic Hamiltonians in \cite[Theorem 5.1]{Giv01}.

We summarize some basics of quantization of quadratic Hamiltonians from \cite{Giv01}.
Let $H$ be a vector space of finite rank, equipped with a non-degenerating pairing $\LD-, -\RD$.
Let  $H(\!(z)\!)$ be the loop space of the vector space $H$, equipped with a symplectic form $\Omega$
\begin{equation*}
\Omega \Big(f(z), g(z)\Big):={\rm Res}_{z=0} \LD f(-z), g(z)\RD\,.
\end{equation*}

Let $\bt_k$ be the collection of variables $\bt_k=\{t^\alpha_{k}\}_{\alpha}$ where $\alpha$ runs over a basis of $H$, and $\bt$ be the collection
\begin{equation*}
\bt=\{\bt_0, \bt_1, \cdots\}\,.
\end{equation*}
We organize the collection $\bt_{k}$
into a formal series $\bt_k$
\begin{equation*}
\bt_k(z)=\sum_{i}t^i_k\cdot \alpha_i\cdot z^k\,.
\end{equation*}
Similar notations are used for $\bf{s}_k, \bf{s}$ 
%$\widetilde{\bf{s}}_k$ and $\widetilde{\bf{s}}$ 
below.
Introduce the {\em dilaton shift}
\begin{equation}
\label{dilaton-shift} 
\mathbf{q} (z)=\bt(z) -z\cdot\one\,.
%\widetilde{\bf q}(z)=\widetilde{\bt}(z) -z\cdot\one\,.
\end{equation}

We consider an upper-triangular symplectic operator on $H(\!(z)\!)$, defined by 
\begin{equation*}
S(z^{-1}): =1+\sum_{i=1}^{\infty} z^{-i}S_i, \quad S_i\in {\rm End}(H)\,.
\end{equation*}
Given an element $\mathcal{G}(\mathbf{q})$ in certain Fock space, the quantization operator $\widehat{S}$ of a symplectic operatos $S$ gives another Fock space element
\begin{equation}
\label{quant-operator}
(\widehat{S}^{-1}\mathcal{G})(\mathbf{q})=e^{W(\mathbf{q},\mathbf{q})/2\hbar}\mathcal{G}([S \mathbf{q}]_+)\,,
\end{equation}
where
$[S\mathbf{q}]_+$ is the power series truncation of the function $S(z^{-1})\mathbf{q}(z)$, and the quadratic form
$W = \sum (W_{k\ell}{\mathbf{q}}_k, {\mathbf{q}}_\ell)$ is defined by
\begin{equation*}
\sum_{k, \ell\geq0}{W_{k, \ell}\over w^kz^\ell}:={S^*(w^{-1})S(z^{-1})-{\rm Id}\over w^{-1}+z^{-1}}\,.
\end{equation*}
Here ${\rm Id}$ is the identity operator on $H(\!(z)\!)$ and $S^*$ is the adjoint operator of $S$.

%We recall some necessary notations.
Following Givental \cite[Section 5]{Giv01}, for the descendent theory
we define a particular symplectic operator $S_{t}$ by
\begin{equation}
\label{s-operator}
(a, S_{t} b):=\langle a, \frac{b}{z-\psi}\rangle=:(a, b)+\sum_{k=0}^{\infty}\LL a, b\wpsi^k\RR_{0,2}^{\circ \E}\, z^{-1-k}\,.
\end{equation}
%This operator $S_{\tilde{\mathbf{q}}}$ is symplectic on the loop space $(H(\!(z)\!), \Omega)$.\\
%and we denote its quantization operator by $\widehat{S}_{t}$, which acts on Fock space ??.

Now we specialize to the elliptic curve case and write down the quantization formula for the ancestor/descendent correspondence explicitly.
Henceforward, we use the following convention.
\begin{itemize}
\item Recall $\{\one, \mathfrak{b}_1, \mathfrak{b}_2 ,\phi\}$ is a basis of the FJRW state space $\mathscr{H}_{(W_d,G_d)}$ given in \eqref{cubic-basis}.
We parametrize the ancestor classes $\one\psi^{\ell},  \mathfrak{b}_1\psi^{\ell}, \mathfrak{b}_2\psi^{\ell},\phi\psi^{\ell}$
%and the
 %descendent classes $\one\wpsi^{\ell}, \mathfrak{b}_1\wpsi^{\ell}, \mathfrak{b}_2\wpsi^{\ell},\phi\wpsi^{\ell}$
by
\begin{equation}
\label{fjrw-coordinates}
s^0_{\ell}, s^1_{\ell}, s^2_{\ell}, s^3_{\ell}\,.
%\quad
%\tilde{s}^0_{\ell}, \tilde{s}^1_{\ell}, \tilde{s}^2_{\ell}, \tilde{s}^3_{\ell}
\end{equation}
%respectively.
\item Recall $\{\one, e_1, e_2,\omega\}$ is a basis of the cohomology space $H^*(\E)$.
We parametrize the ancestor classes $\one\psi^{\ell}, e_1\psi^{\ell},e_2\psi^{\ell}, \omega\psi^{\ell}$ and
descendent classes $\one\wpsi^{\ell}, e_1\wpsi^{\ell}, e_2\wpsi^{\ell},\omega\wpsi^{\ell}$
by
\begin{equation}
\label{descedent-coordinates}
t^0_{\ell}, t^1_{\ell}, t^2_{\ell}, t^3_{\ell}\,;
\quad
\tilde{t}^0_{\ell}, \tilde{t}^1_{\ell}, \tilde{t}^2_{\ell}, \tilde{t}^3_{\ell}\,
\end{equation}
respectively.
%\item Independently, we also use $t$ and $u$ to parametrize $\omega$ and $\phi$.
\end{itemize}
The total descendent potential of the GW theory of $\E$ is defined by
\begin{equation}
\label{descedent-curve}
\cD^\E(\tilde{\bf t}):=\exp\left(\sum_{g\geq0}\hbar^{g-1}\cF^{\circ\E}_{g}(\tilde{\bf t})\right)
:=\exp\left(\sum_{g\geq0}\hbar^{g-1}\sum_{n\geq 0}\langle\tilde{\bf t}, \cdots, \tilde{\bf t} \rangle^{\circ\E}_{g,n}\right)\,.
\end{equation}
The total ancestor potential of the GW theory of $\E$ is defined by
\begin{equation*}
\cA^\E(\bt):=\exp\left(\sum_{g\geq0}\hbar^{g-1}\cF^{\circ\E}_{g}({\bf t})\right)
:=\exp\left(\sum_{g\geq0}\hbar^{g-1}\sum_{\substack{n\geq 0\\ 2g-2+n>0}}\langle{\bf t}, \cdots, {\bf t} \rangle^{\circ\E}_{g,n}\right)\,.
\end{equation*}
The total ancestor FJRW potential is defined similarly.

The quantity $\cF_1^{\circ \E}(t)$ is the genus-one primary potential of the GW theory of $\E$ appearing in $\mathcal{A}^{\E}$, with $q=e^{t}$ the parameter keeping track of the degree.
According to \cite[Theorem 5.1]{Giv01}, the Ancestor/descendent correspondence of the elliptic curve is given by
\begin{equation}
\label{ancestor-descendent}
\cD^{\E}=e^{\cF^{\circ \E}_{1}(t)}\widehat{S}_{t}^{-1} \cA^{\cE}\,,
\end{equation}
under the identification
$\tilde{t}^i_\ell=t^i_\ell$\,.

%%%
\iffalse
The right hand side does not depends on the choice of $\tilde{\mathbf{q}}$.
So it is convenient for us to restrict the variables $\tilde{\mathbf{q}}$ so that
\begin{equation*}
\widetilde{t}^3_{0}=\log q=t \quad \text{and} \quad \widetilde{t}^0_{0}=\widetilde{s}^0_{0}=\widetilde{s}^1_{0}=0\,.
\end{equation*}
%%%
\fi

According to \eqref{eqnartinstack}, the genus-one potential is
\begin{equation*}
\cF_1^{\circ \E}(t)=G(q)=\sum_{d\geq 1}\LD\,\RD^{\circ \E}_{1,0,d}q^{d}=-\log (q)_{\infty}\,, \quad q=e^{t}\,.
\end{equation*}
Thus we obtain
\begin{equation*}
\widehat{S}_{t}^{-1} \cA^{\E}=e^{-\cF_1^{\circ \E}(t)}\cD^{\E}=(q)_{\infty}\cdot\cD^{\E}=(q)_{\infty}\cdot\sum_{g,n\in\mathbb{Z}}\hbar^{g-1}\LL \tilde{\mathbf{t}}, \cdots, \tilde{\mathbf{t}}\RR_{g,n}^{\bullet \E}\,.
\end{equation*}
A direct calculation of \eqref{s-operator} shows the restriction of $S_{t}$ on the odd cohomology is the identity operator, and the restriction to even cohomology is given by
\begin{equation*}
S_{t}
\begin{pmatrix}
\one\\
\omega
\end{pmatrix}
=
\begin{pmatrix}
1&{t\over z}\\
&1
\end{pmatrix}
\begin{pmatrix}
\one\\
\omega
\end{pmatrix}\,.
\end{equation*}
Now we write down an explicit formula for the quantization operator \eqref{quant-operator}.
The symplectic operator $S_{t}$ is given in terms of infinitesimal symplectic operator ${h(t)\over z}$,
\begin{equation*}
S_{t}=\exp\left({h(t)\over z}\right)\,,
\end{equation*}
where $h(t)\in {\rm End}(H)$ such that 
$h(t)(\one)=t\omega, h(t)(\omega)=0$, and $h(t)(e_i)=0$ otherwise.
In terms of the Darboux coordinates $\tilde{q}_{k}^{i}, \tilde{p}^{i}_{k}$,
the corresponding quadratic Hamiltonian has the form (see \cite[Section 3]{Lee08} for example)
\begin{equation*}
P\left({h(t)\over z}\right)=-t{(\tilde{q}_0^0)^2\over 2}-t\cdot\sum_{k\geq 0} \tilde{q}^0_{k+1} \tilde{p}^{0}_k\,.
\end{equation*}
Applying the quantization formula, we get
\begin{equation}
\label{quant-s-operator}
\widehat{S}_{t}=\exp\left(\widehat{P\left({h(t)\over z}\right)}\right)
=\exp\left(-t{(\tilde{q}_0^0)^2\over 2}-t\cdot\sum_{k\geq 0} \tilde{q}^0_{k+1}{\partial\over\partial \tilde{ q}^0_k}\right)\,.
\end{equation}
As a consequence, we observe that this operator has no influence on the parameter $\tilde{q}_{k}^3$ for the descendent $\omega \tilde{\psi}^k$. Thus we obtain
\begin{prop}\label{lem-ancestor-descedent-stationary}
The relation between the stationary descendent invariants and the corresponding ancestor invariants is given by
\begin{equation}
\label{ancestor-descedent-stationary}
(q)_{\infty}\cdot \LL \prod_{i=1}^N \omega\wpsi_{i}^{\ell_i}\RR_{g}^{ \bullet \E}=\LL \prod_{i=1}^N \omega\psi_{i}^{\ell_i}\RR_{g}^{ \bullet \E}\,.
\end{equation}
%Both sides are disconnected invariants. The difference is the factor $(q)_{\infty}$, where for the ancestor invariants, the stability condition implies $\LD\,\RD^{\bullet\E, \cA}_{1,0,d}=0$.
%However, they are stationary invariants, by the Kontsevich-Manin formula [Theorem 1.1 in \cite{Kontsevich:1998}], they equal the corresponding ancestor invariants.
\end{prop}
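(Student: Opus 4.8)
The plan is to restrict the ancestor/descendent correspondence \eqref{ancestor-descendent} to the stationary sector and to read off the coefficient of the appropriate monomial in the $\omega$-variables. First I would rewrite \eqref{ancestor-descendent} using the identity $\cF_1^{\circ\E}(t)=G(q)=-\log(q)_\infty$ recorded in \eqref{eqnartinstack}: this turns the prefactor $e^{\cF_1^{\circ\E}(t)}$ into $(q)_\infty^{-1}$ and yields $\widehat{S}_t^{-1}\cA^\E=(q)_\infty\cdot\cD^\E$, equivalently $\cA^\E=\widehat{S}_t\bigl((q)_\infty\cdot\cD^\E\bigr)$. The stationary correlators on either side are exactly the parts of these potentials supported on monomials in the variables $\tilde{q}^3_k$ (dual to $\omega\wpsi^k$), i.e.\ the parts obtained after setting $\tilde{q}^0=\tilde{q}^1=\tilde{q}^2=0$.

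The heart of the argument is the explicit shape of the quantized operator in \eqref{quant-s-operator}. Both summands of its exponent, the multiplication term $-t(\tilde{q}_0^0)^2/2$ and the derivation $-t\sum_{k\ge0}\tilde{q}^0_{k+1}\,\partial/\partial\tilde{q}^0_k$, involve only the variables $\tilde{q}^0_k$ dual to the $\one$-descendents $\one\wpsi^k$; no variable $\tilde{q}^3_k$, $\tilde{q}^1_k$ or $\tilde{q}^2_k$ ever occurs. I would then show that $\widehat{S}_t$ acts as the identity on the locus $\tilde{q}^0=0$. Writing $A$ for the multiplication term and $B$ for the derivation, a term of $\exp(A+B)$ with $a$ factors $A$ and $b$ factors $B$ carries $2a+b$ undifferentiated factors of some $\tilde{q}^0_j$ but only $b$ derivatives; clearing all explicit $\tilde{q}^0$'s would require $b\ge 2a+b$, forcing $a=0$, and in a pure $B^b$ the leftmost factor $\tilde{q}^0_{k+1}$ is never differentiated. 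Hence every term beyond the identity retains an undifferentiated $\tilde{q}^0$ factor and dies at $\tilde{q}^0=0$, so $\widehat{S}_t g\big|_{\tilde{q}^0=0}=g\big|_{\tilde{q}^0=0}$ for any $g$.

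Granting this, I would restrict $\cA^\E=\widehat{S}_t\bigl((q)_\infty\cdot\cD^\E\bigr)$ to $\tilde{q}^0=\tilde{q}^1=\tilde{q}^2=0$, obtaining $\cA^\E\big|_{\mathrm{stat}}=(q)_\infty\cdot\cD^\E\big|_{\mathrm{stat}}$, and extract the coefficient of $\prod_{i=1}^{N}\tilde{q}^3_{\ell_i}$ on both sides. Since $\cA^\E$ and $\cD^\E$ are exponentials of the connected potentials, these coefficients are the disconnected stationary ancestor and descendent invariants; the left side is $\LL \prod_{i=1}^N \omega\psi_{i}^{\ell_i}\RR_{g}^{\bullet\E}$ and the right side is $(q)_\infty\cdot\LL \prod_{i=1}^N \omega\wpsi_{i}^{\ell_i}\RR_{g}^{\bullet\E}$, with the genus $g$ fixed by the dimension axiom. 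This is precisely \eqref{ancestor-descedent-stationary}.

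The part I expect to require the most care is the treatment of the divisor variable $\tilde{q}^3_0=t$, which simultaneously records the insertion $\omega\wpsi^0=\omega$ and, through $q=e^{t}$, the curve degree. One must check that the change-of-variables description of $\widehat{S}_t$ dictated by \eqref{quant-operator} acts on the stationary locus only through the harmless shift $\tilde{q}^3_0\mapsto\tilde{q}^3_0-t$ induced by $h(t)(\one)=t\omega$ and $h(t)(\omega)=0$, so that it is absorbed into the $q$-series tracking and does not disturb the comparison of the $\tilde{q}^3_{\ell_i}$-coefficients for $\ell_i\ge1$. Reconciling the differential-operator form \eqref{quant-s-operator} with this substitution form on the stationary locus is the one place where the Givental conventions (dilaton shift versus time variables) must be pinned down precisely; once that is done, the coefficient extraction is purely formal.
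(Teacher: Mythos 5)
Your proposal is correct and takes essentially the same route as the paper: both reduce the statement to the ancestor/descendent formula \eqref{ancestor-descendent}, use $\cF_1^{\circ \E}(t)=-\log (q)_{\infty}$ from \eqref{eqnartinstack} to produce the factor $(q)_{\infty}$, and then observe from the explicit quantization \eqref{quant-s-operator} that $\widehat{S}_{t}$ does not disturb the stationary sector. Your counting lemma and the dilaton-shift/divisor-variable subtlety you flag at the end simply spell out what the paper compresses into its one-line observation that the operator ``has no influence on the parameter $\tilde{q}_{k}^3$''.
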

\iffalse
%%%
In particular, this explains
\begin{equation*}
(q)_{\infty}\cdot \LL \prod_i\omega\wpsi_i^{\ell_i}\RR_{g,n}^{\bullet, \E}\in\C[E_2, E_4, E_6]\,.
\end{equation*}
%%%
\fi

%%%
\iffalse
The Cayley transformation $\mathscr{C}_{ \tau_{*}}^{\mathrm{hol}}$ extends linearly according to \eqref{eqnCayleyonvariables} and yields
\begin{equation*}
\mathscr{C}_{\tau_{*}}^{\mathrm{hol}}\left(\cA^{\E_d}\right)=\cA^{W_d}
\,.
\end{equation*}
\fi%%%

%In the rest of this paper we shall concentrate on ancestor invariants and abbreviate the superscript ``$\bullet \E\,,\mathcal{A}$" by ``$\E$" in the correlation functions.
%%%
\iffalse
Hereafter, unless indicated otherwise,
an $N$-point function $\LL \prod_{i=1}^N \omega\psi_{i}^{\ell_i}\RR_{g}^{ \bullet \E}$ would always mean
the ancestor one
$\LL \prod_{i=1}^N \omega\psi_{i}^{\ell_i}\RR_{g}^{ \bullet \E, \, \cA}$.
\fi
%%%
%%%
\iffalse
We identify these two sets of variables\footnote{Strictly speaking, the
relation between $t^{3}_{0}$ and $s^{3}_{0}$ is given by the Cayley transform in \eqref{eqnCayleytransform} which also induces
the transformation on the other variables and on the generating series of invariants.
The detailed discussions require a more careful investigation between the generating series and
a version of quasi-Jacobi forms \cite{Eichler:1984}. These will appear elsewhere.}
\fi
%%%

Quasi-modularity for the correlation functions in the disconnected theory is equivalent to
those for the connected theory, as one can see by examining the generating series.
Hence our
Theorem \ref{mainthm2}(ii)
is consistent with the results in  \cite{Bloch:2000, Okounkov:2006} about the quasi-modularity via the above proposition.

\section{Higher-genus FJRW invariants for the Fermat cubic}
\label{secapplications2}

In this section, we give several applications of Theorem \ref{main-theorem}.
With the help of the Bloch-Okounkov formula \cite{Bloch:2000}, Cayley transformation allows us to compute the FJRW invariants of the Fermat elliptic polynomials at all genera.
It also transforms various structures for the GW theory of elliptic curves, such as the holomorphic anomaly equations \cite{Okounkov:2006, OP18}  and Virasoro constraints \cite{Okounkov-vir}, to those in the corresponding FJRW theory.

\subsection{Higher-genus ancestor FJRW invariants for the cubic}

Consider the Laurent expansion of the $N$-point generating function $ F_{N}(z_{1}, z_{2},\cdots, z_{N}, q)$.
The Laurent expansion of $\partial^{m}\ln \Theta$ is clear from \eqref{eqnlogexpansionofprime}, while
that of $1/\Theta$ or $1/\sigma$
can be obtained by applying the Fa\'a de Bruno formula to the exponential term in $1/\sigma$ which in the current case is determined by the Bell polynomials
in  $-B_{2k}E_{2k}/2k, k\geq 2$.
However, this only gives the Laurent coefficients in terms of the generators
$E_{2k}, k\geq 2$ for the ring of modular forms. The expansions obtained are not particularly useful for our later purpose which prefers a finite set of
generators only.

\iffalse
\comment{The following only makes things more complicated. In applying the exponential formula to get the Bell polynomials (not Schur that I said earlier),
the ${2k}!$ should not combined with $z^{2k}$. )
It is in fact more convenient for us to introduce,
following \cite[Formula (5)]{OP18},
 the weight $2k$ {\em normalized} Eisenstein series
\begin{equation}
\label{normalize-eisenstein}
C_{2k}(q)=-{B_{2k}\over {2k}\cdot (2k)!}+{2\over (2k)!}\sum_{n\geq1}\sum_{d\mid n}d^{2k-1}q^n=-{B_{2k}\over 2k\cdot (2k)!}E_{2k}(q).
\end{equation}
For example, one has
\begin{equation}
C_2=-{1\over 24} E_{2}\,,\quad
C_4={1\over 4! \cdot 120} E_{4}\,,\quad
C_6=-{1\over 6! \cdot 252} E_{6}\,.
\end{equation}}
\fi

We proceed as follows.
First
the Taylor expansion of the Weierstrass $\sigma$-function is given by the classical result  \cite{Wer:1894}
\begin{equation}\label{eqnsigmaexpansionintermsofE4E6}
\sigma=\sum_{m,n\geq 0} {a_{m,n}\over (4m+6n+1)!} \left({2\pi^4\over 3} E_{4}\right)^{m} \left({16\pi^6\over 27} E_{6}\right)^{n} \left({z\over 2\pi  \sqrt{-1}} \right)^{4m+6n+1}\,,
\end{equation}
where the coefficients $a_{m,n}$ are complex numbers determined from
the Weierstrass recursion
\begin{align*}
a_{m,n}=&3(m+1)a_{m+1,n-1}+{16\over 3} (n+1)a_{m-2,n+1}\\
&-{1\over  6} (4m+6n-1) (4m+6n-2) a_{m-1,n}\,,
\end{align*}
with the initial values  $a_{0,0}=1$ and
 $a_{m,n}=0$ if either of $m,n$ is strictly negative.
The Laurent expansion of $1/\sigma$ is then obtained from the above.
It takes the form
\begin{equation}
{1\over \sigma}=  \sum_{m,n\geq 0} b_{m,n} \left({2\pi^4\over 3} E_{4}\right)^{m}  \left({16\pi^6\over 27} E_{6}\right)^{n}  \left({z\over 2\pi  \sqrt{-1} }\right)^{4m+6n-1}\,,
\end{equation}
for some $b_{m,n}$ that can also be obtained recursively.
The formula in
\eqref{eqnsigmaexpansionintermsofE4E6} also
 gives rise to the Laurent expansion of $\partial \ln \sigma$ and hence of
$\partial \ln \Theta$ in terms of the generators $E_{2}, E_{4}, E_{6}$.
Together with that of $\partial \ln \Theta$
it can be used to  compute the Laurent expansion of $F_{N}(z_{1},z_{2},\cdots, z_{N},q)$.\\

Consider the $N=1$ case first. According to  
\eqref{eqnfirstfewNpointfunctions}
the Laurent expansion of $F_{1}$ is given by
\begin{eqnarray*}
F_{1}(z,q)&=&{1\over 2\pi  \sqrt{-1}\cdot (q)_{\infty}} e^{-{1\over 24}E_{2}z^2} \sigma^{-1}\\
&=&{1\over z\cdot (q)_{\infty}}\sum_{\ell,m,n\geq 0} {b_{m,n}\over \ell!} \left(-{E_{2}\over 24}\right)^{\ell}     \left({E_4\over 24}\right)^{m} \left( - {E_6\over 108}\right)^{n} {z^{2\ell+4m+6n}}\,.
\end{eqnarray*}
We therefore arrive at the following relation for the descendent GW correlation functions
\begin{equation}\label{eqnonepointGWcorrelationfunctions}
(q)_{\infty}\cdot \LL \omega\wpsi^{k}\RR^{\bullet\E}= \sum_{\substack{\ell, m,n\geq 0\\
2\ell+4m+6n=k+2}}  {b_{m,n}\over \ell!} \left(-{E_{2}\over 24}\right)^{\ell}  \left({E_4\over 24}\right)^{m} \left( - {E_6\over 108}\right)^{n}
 \,,
 \quad k\geq -2\,.
\end{equation}
As explained in Proposition \ref{lem-ancestor-descedent-stationary},
this is the corresponding ancestor GW correlation function and is
indeed a quasi-modular form of weight $k+2$.
The first few Laurent coefficients are
\begin{equation}
1\,,\quad -{1\over 24} E_{2}\,,
\quad
{1\over 2^6 3^2}\left(
{1\over 5}E_{4}+{1\over 2}E_{2}^2\right)\,,
\quad \cdots
\end{equation}
The other cases are similar. For example, for the $N=2$ case from \eqref{eqnfirstfewNpointfunctions}
we write
\begin{equation*}
(q)_{\infty }\cdot F_{2}(z_1,z_2)={z_1+z_2\over \Theta(z_1+z_2) }\cdot { \partial_{z_{1}} \ln\Theta(z_1) +\partial_{z_{2}} \log \Theta(z_2) \over z_1+z_2}\,.
\end{equation*}
The first term on the right hand side is expanded as in the $N=1$ case, while the second term using \eqref{eqnprimeform} and \eqref{eqnlogexpansionofprime}.\\
%\comment{How to compute $N$-point FJRW functions when $N\geq 2$? Give an example for $N=2$.}

%Notice that the GW invariants above are descendent invariants. However, they are stationary invariants, by the Kontsevich-Manin formula [Theorem 1.1 in \cite{Kontsevich:1998}], they equal the corresponding ancestor invariants which has been the main focus of this work.\\

Recall that the derivative on the level of generating series corresponds to the divisor equation in GW theory, and that taking derivatives commute with Cayley transformations as shown in \cite{Shen:2016}.
%Using Ramanujan's notation, 
The generators of the differential ring of quasi-modular forms are $E_2, E_4, E_6$.
%\begin{equation} P=E_2\,,\quad Q=E_4\,,\quad R=E_6\,. \end{equation}
To deal with the differential structure, it is in fact more convenient to use the generators $E_2,E_2',E_2''$ for the ring of quasi-modular forms as opposed to $E_2, E_4, E_6$. 
By Therorem \ref{mainthm2}, the ancestor GW correlation functions satisfy 
\begin{equation}
 \LL   \prod_{i=1}^{N} \omega \psi_i^{k_{i}} \RR^{ \circ \E}\in \mathbb{C}[E_2,E_2',E_2'']\,.
\end{equation}
%Now we adopt the notation to rewrite FJRW correlation functions
%$$\LL \lambda\RR_{g}^{\bullet W_a}:=\LL \phi\psi_1^{k_1}, \dots, \phi\psi_n^{k_n}\RR_{g}^{\bullet W_a}.$$
Theorem \ref{main-theorem} applies to the disconnected invariants (by examining the relation between the generating series)  and we have
\begin{equation}
\LL  \phi\psi_1^{k_1}, \dots, \phi\psi_N^{k_N}\RR_{g}^{ \bullet W_d}=\mathscr{C}_{ \tau_{*}}^{\mathrm{hol}}\left(\LL \omega\psi_1^{k_1}, \dots, \omega\psi_N^{k_N}\RR_{g}^{\bullet \E_d}\right)\,.
\end{equation}

Now we can apply Cayley the transformation directly to the disconnected, ancestor GW correlation functions and obtain the disconnected, ancestor FJRW correlation functions.
As computed in \eqref{eqnellipticexpansionsofE2}, for the $d=3$ case we have
\iffalse
\textcolor{red}{Better to compute $C_2, C_4, C_6$ and extract the invariants directly.
For example,
 \begin{equation}
\mathscr{C}_{ \tau_{*}}(C_2)=
{1\over 108}\cdot\frac{s^2}{2!}+{1\over 243}\cdot\frac{s^5}{5!}+{8\over2187}\cdot\frac{s^8}{8!}
+{104\over 6561}\cdot{s^{11}\over 11!}+{2960\over 177147}\cdot{\frac{s^{14}}{14!}}+\cdots
\end{equation}
This implies the value of the genus-one FJRW invariants $\LD(\phi)^n\RD_{1}$ are
$$\LD(\phi)^3\RD_{1}={1\over 108}, \quad \LD(\phi)^6\RD_{1}={1\over 243}, \quad \LD(\phi)^9\RD_{1}={8\over 2187}, \quad \LD(\phi)^{12}\RD_{1}={104\over 6561}, \cdots$$}
 \fi
 %%%
 \begin{equation}
\mathscr{C}_{ \tau_{*}}^{\mathrm{hol}}(E_2)=
-\frac{s^2}{9}-\frac{s^5}{ 1215}-\frac{s^8}{  459270}
+\cdots
\end{equation}
Since $\mathscr{C}_{\tau_{*}}^{\mathrm{hol}}$ respects the product and the differential structure  \cite{Shen:2016},
the differential equations \eqref{ramanujan}
%$$P'={P^2-Q\over 12}, \quad Q'={PQ-R\over 3}$$
 imply
 \begin{equation}
\begin{dcases}
\mathscr{C}_{ \tau_{*}}^{\mathrm{hol}}(E_4)=\mathscr{C}_{ \tau_{*}}^{\mathrm{hol}}(E_2^2-12 E_2')%\\
=
\frac{8s}{3}+\frac{5s^4}{ 81}+\frac{2s^7}{5103}+\cdots
\\
\mathscr{C}_{ \tau_{*}}^{\mathrm{hol}}(E_6)=\mathscr{C}_{ \tau_{*}}^{\mathrm{hol}}(E_2E_4-3 E_4')%\\
=
-8-\frac{28s^3}{27}-\frac{7s^6}{405}+\cdots
\end{dcases}
\end{equation}
%%%
\iffalse
According to the discussions in Remark \ref{remArtinstack}, we work with $\eta$ instead of
$(q)_{\infty}$ in \eqref{eqnonepointGWcorrelationfunctions}.\marginpar{This paragraph can be removed}
Recall the classical formula
\begin{equation*}
(2\pi )^{12}\eta^{24}= {64\over 27} (E_4^3-E_6^2)\,,
\end{equation*}
we can obtain $\mathscr{C}_{ \tau_{*}}(\eta)$
from
$\mathscr{C}_{ \tau_{*}}^{\mathrm{hol}}(Q), \mathscr{C}_{ \tau_{*}}^{\mathrm{hol}}(R)$, where the choice of the branch arising from taking roots can be fixed by the initial values of
FJRW correlation functions computed before.
%%%
\fi
From \eqref{eqnonepointGWcorrelationfunctions}, Proposition \ref{lem-ancestor-descedent-stationary},  
Theorem \ref{main-theorem} and the degree formula \eqref{gw-degree}, we immediately obtain 
$$\LL  \phi\psi_1^{2g-2}\RR_{g,1}^{\bullet W_3}= \sum_{\substack{\ell, m,n\geq 0\\
\ell+2m+3n=g}}
{ b_{m,n}\over \ell!} \left(-{\mathscr{C}_{ \tau_{*}}^{\mathrm{hol}}(E_2)\over 24} \right)^{\ell}   \left({ \mathscr{C}_{ \tau_{*}}^{\mathrm{hol}}(E_4)\over 24}\right)^{m} \left( - { \mathscr{C}_{ \tau_{*}}^{\mathrm{hol}}(E_6)\over 108}\right)^{n}
 \,.
 $$
Now Corollary \ref{coronepointFJRW} follows from the fact that the disconnect and connected one-point ancestor functions are the same.

\subsection{Holomorphic anomaly equations}
We now describe holomorphic anomaly equations for the FJRW correlation functions.
In the rest of the paper we shall only discuss connected invariants and hence omit the supscript ``$\circ$" from the notations.

%\subsection{Holomorphic anomaly equations for FJRW invariants}

\subsubsection{HAE for ancestor GW correlation functions}
In \cite{OP18}, Oberdieck and Pixton use the polynomiality of double ramification cycles to prove that the GW cycles
$\Lambda_{g,n}^{\E}(\alpha_1,\cdots,\alpha_n)$
of the elliptic curves are cycle-valued quasi-modular forms. Take the derivative of those cycles with respect to the second Eisenstein series $E_2(q)$, they obtain a holomorphic anomaly equation \cite[Theorem 3]{OP18}.
As a consequence, intersecting the corresponding GW cycles with $\prod_{k}\psi_k^{\ell_k}$ on $\overline{\cM}_{g,n}$ leads to a holomorphic anomaly equation for the ancestor GW functions
\begin{equation*}
\LL\alpha_1\psi_1^{\ell_1}, \cdots, \alpha_n\psi_n^{\ell_n}\RR_{g,n}^{\E}(q)\in \mathbb{C}[E_2,E_4,E_6]\,.
\end{equation*}
For each subset $I\subseteq \{1, \cdots, n\}$, we use the following convention
% (recall the graded commutativity for odd elements)
\begin{equation*}
\alpha_{I}:=\{\alpha_i\psi_i^{\ell_i}, \forall i\in I\}\,.
\end{equation*}
%We write down this equation here and then compare it with the equation \eqref{hae-N-stationary}.
For convenience we introduce the normalized Eisenstein series
\begin{equation*}
C_2(q)=-{1\over 24}\,E_2(q)\,.
\end{equation*}
It is a classical fact that the Eisenstein series $E_2, E_4,E_6$ are algebraically independent.
One has \cite{OP18} for the ancestor GW correlation functions 
%(recall Lemma \ref{lem-ancestor-descedent-stationary})
\begin{align}
&{\partial\over \partial C_2}\LL\alpha_1\psi_1^{\ell_1}, \cdots, \alpha_n\psi_n^{\ell_n}\RR_{g,n}^{\E}(q)\label{hae-op-anomaly}\\
=&
%\textcolor{red}{1\over2}
\LL\alpha_1\psi_1^{\ell_1}, \cdots, \alpha_n\psi_n^{\ell_n}, \one, \one\RR_{g-1,n+2}^{\E}(q)+
\sum_{\substack{g_1+g_2=g\\ \{1, \cdots, n\}=I_1\coprod I_2}}\LL \alpha_{I_1}, \one\RR_{g_1}^{\E}(q)\LL \one, \alpha_{I_2}\RR_{g_2}^{\E}(q)\nonumber\\
&-2\sum_{i=1}^{n}\left(\int_\E \alpha_i\right)\LL\alpha_1\psi_1^{\ell_1}, \cdots, \one\psi_i^{\ell_i+1},\cdots,\alpha_n\psi_n^{\ell_n}\RR_{g,n}^{\E}(q)\nonumber
\end{align}
%Here $\delta_{\alpha_i}^{\omega}$ is the Kronecker symbol such that $\delta_{\alpha_i}^{\omega}=0$ for $\alpha_i\in \{\one, \alpha, \beta\}$ and  $\delta_{\omega}^{\omega}=1$.

%\begin{rem}
%This equation can also be proved using only the results in Section \ref{sechighergenusdescendentGW} and the theory of real-analytic Jacobi forms. The details will appear elsewhere.
%\end{rem}

\begin{rem}
This equation can also be proved using only the combinatorial results reviewed in Section \ref{sechighergenusdescendentGW}, see Pixton \cite{Pixton:2008}.
\end{rem}

%%%
\iffalse
\begin{ex}
Recall from \eqref{eqnconnectedgenusonecorrelationfunction} that $\LL\omega\RR_1^\E=C_2$. The holomorphic anomaly equation \eqref{hae-op-anomaly} for this correlation function reads
\begin{equation*}
{\partial\over \partial C_2}\LL\omega\RR_1^\E=\LL \omega, \one, \one\RR_{0}^{\E}=1\,.
\end{equation*}
The reason is that in \eqref{hae-op-anomaly} the second term vanishes by the stability condition and the third term vanishes  by the Virasoro constraints.
\end{ex}
%This solves the conjecture in \cite{MRS12}, where both sides are rescaled by ${1\over 2}$.
\fi
%%%%

\subsubsection{HAE for ancestor FJRW correlation functions}

Recall that the holomorphic Cayley transformation $\mathscr{C}_{ \tau_* }^{\mathrm{hol}}$
respect the differential ring structure of the set of  quasi-modular forms.
Applying the holomorphic Cayley transformation to \eqref{hae-op-anomaly}, using
Theorem \ref{mainthm2}
we immediately obtain
the following HAE for the ancestor FJRW correlation functions.

\begin{cor}
\label{hae-lg}
Let the notations be as in Theorem \ref{main-theorem}.
%We have the holomorphic anomaly equation for FJRW theory of ...
For the $d=3$ case, the ancestor FJRW correlation function
\begin{equation*}
\LL\alpha_1\psi_1^{\ell_1}, \cdots, \alpha_n\psi_n^{\ell_n}\RR_{g,n}^{W_d}
\in \mathbb{C}[ \mathscr{C}_{ \tau_* }^{\mathrm{hol}}(C_2), \mathscr{C}_{ \tau_* }^{\mathrm{hol}}(E_4), \mathscr{C}_{ \tau_* }^{\mathrm{hol}}(E_6) ]\,,
\quad
C_{2}=-{1\over 24}E_{2}
\end{equation*}
satisfies
\begin{align*}
&{\partial\over \partial \mathscr{C}_{ \tau_* }^{\mathrm{hol}}(C_2)}\LL\alpha_1\psi_1^{\ell_1}, \cdots, \alpha_n\psi_n^{\ell_n}\RR_{g,n}^{W_d}\\
=&\LL\alpha_1\psi_1^{\ell_1}, \cdots, \alpha_n\psi_n^{\ell_n}, \one, \one\RR_{g-1,n+2}^{W_d}+
\sum_{\substack{g_1+g_2=g\\ \{1, \cdots, n\}=I_1\coprod I_2}}\LL \alpha_{I_1}, \one\RR_{g_1}^{W_d}\LL \one, \alpha_{I_2}\RR_{g_2}^{W_d}\\
&-
2\sum_{i=1}^{n}\LL\alpha_1\psi_1^{\ell_1}, \cdots, \delta_{\alpha_i}^{\phi}\one\psi_i^{\ell_i+1},\cdots,\alpha_n\psi_n^{\ell_n}\RR_{g,n}^{W_d}\,,
\end{align*}
where $\delta_{\alpha_i}^{\phi}$ is the Kronecker symbol.
\end{cor}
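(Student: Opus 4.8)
The plan is to obtain the FJRW holomorphic anomaly equation by applying the holomorphic Cayley transformation $\mathscr{C}_{ \tau_* }^{\mathrm{hol}}$ to the Oberdieck--Pixton equation \eqref{hae-op-anomaly} for the ancestor GW correlation functions and then converting each resulting term into an FJRW correlation function through Theorem \ref{mainthm2}. First I would note that by Theorem \ref{mainthm2}(ii) and the Ramanujan identities \eqref{ramanujan} every ancestor GW correlation function lies in $\mathbb{C}[C_2, E_4, E_6]$ with $C_2 = -E_2/24$, and that the three Eisenstein series are algebraically independent. The transformation $\mathscr{C}_{ \tau_* }^{\mathrm{hol}}$ is a ring isomorphism onto its image $\mathbb{C}[\mathscr{C}_{ \tau_* }^{\mathrm{hol}}(C_2), \mathscr{C}_{ \tau_* }^{\mathrm{hol}}(E_4), \mathscr{C}_{ \tau_* }^{\mathrm{hol}}(E_6)]$, carrying these generators to a fresh algebraically independent generating set; together with Theorem \ref{mainthm2}(iv) this already establishes the membership claim for the FJRW correlation function.

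The crucial structural input is the compatibility of $\mathscr{C}_{ \tau_* }^{\mathrm{hol}}$ with the formal partial derivative $\partial/\partial C_2$, taken while holding $E_4$ and $E_6$ fixed. Because $\mathscr{C}_{ \tau_* }^{\mathrm{hol}}$ is a polynomial-ring isomorphism sending the algebraically independent generators $C_2, E_4, E_6$ to an algebraically independent generating set, for any $F = P(C_2, E_4, E_6)$ one has the intertwining identity
$$\mathscr{C}_{ \tau_* }^{\mathrm{hol}}\left(\frac{\partial F}{\partial C_2}\right) = \frac{\partial}{\partial \mathscr{C}_{ \tau_* }^{\mathrm{hol}}(C_2)}\, \mathscr{C}_{ \tau_* }^{\mathrm{hol}}(F),$$
both sides being $\partial P/\partial C_2$ evaluated at the transformed generators. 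Applying $\mathscr{C}_{ \tau_* }^{\mathrm{hol}}$ to the left-hand side of \eqref{hae-op-anomaly} therefore produces exactly $\partial/\partial \mathscr{C}_{ \tau_* }^{\mathrm{hol}}(C_2)$ of the transformed GW correlation function, which by Theorem \ref{mainthm2}(iv) is the FJRW correlation function $\LL\alpha_1\psi_1^{\ell_1}, \cdots, \alpha_n\psi_n^{\ell_n}\RR_{g,n}^{W_d}$.

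It remains to transform the right-hand side of \eqref{hae-op-anomaly} term by term. Since $\mathscr{C}_{ \tau_* }^{\mathrm{hol}}$ respects products, the genus-reducing term and the boundary-splitting sum map, factor by factor and using $\Psi(\one) = \one$, to the corresponding FJRW degeneration and splitting contributions once Theorem \ref{mainthm2}(iv) is applied to each factor. For the last term, on the GW side $\int_\E \alpha_i$ equals the Kronecker symbol $\delta_{\alpha_i}^{\omega}$ in the basis $\{\one, e_1, e_2, \omega\}$; under the grading- and pairing-preserving isomorphism $\Psi$ with $\Psi(\omega) = \phi$ this becomes $\delta_{\alpha_i}^{\phi}$ for the corresponding FJRW insertion, while the descendent $\one\psi_i^{\ell_i+1}$ is preserved. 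Assembling the three transformed terms yields the asserted equation.

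The main obstacle, I expect, is the clean justification of the derivative intertwining identity: one must recognize that the holomorphic-anomaly derivative $\partial/\partial C_2$ is the formal partial derivative in the polynomial ring $\mathbb{C}[C_2, E_4, E_6]$, distinct from the differential $q\partial_q$ of \eqref{ramanujan}, so that its transport under $\mathscr{C}_{ \tau_* }^{\mathrm{hol}}$ is governed solely by the fact that $\mathscr{C}_{ \tau_* }^{\mathrm{hol}}$ is a ring isomorphism carrying $C_2, E_4, E_6$ to algebraically independent images. Once this is observed, the intertwining is purely formal and the term-by-term matching of the remaining contributions is routine.
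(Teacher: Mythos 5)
Your proof is correct and is essentially the paper's own argument: apply $\mathscr{C}_{\tau_*}^{\mathrm{hol}}$ to the Oberdieck--Pixton equation \eqref{hae-op-anomaly} and convert each term into an FJRW correlation function via Theorem \ref{mainthm2}(iv), with $\Psi(\one)=\one$ and $\int_\E \alpha_i = \delta_{\alpha_i}^{\omega}$ becoming $\delta_{\alpha_i}^{\phi}$. Your explicit justification of the intertwining identity for the formal derivative $\partial/\partial C_2$ --- resting on $\mathscr{C}_{\tau_*}^{\mathrm{hol}}$ being a ring isomorphism that carries the algebraically independent generators $C_2, E_4, E_6$ to an algebraically independent generating set --- is precisely what the paper compresses into the remark that the holomorphic Cayley transformation respects the differential ring structure of quasi-modular forms.
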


\subsection{Virasoro constraints}
Virasoro operators in Gromov-Witten theory were proposed by Eguchi, Hori, and Xiong \cite{Eguchi:1997} for Fano manifolds and later generalized to more general targets \cite{Dubrovin:1999, Giv01}.
The famous {\em Virasoro Conjecture} predicts that the total descendent potentials in GW theory are annihilated by the Virasoro operators. 
It is one of the most fascinating conjectures in GW theory. Despite significant developments in the literature, it remains open for a large category of targets.

The Virasoro conjecture for nonsingular target curves is solved by Okounkov and Pandharipande \cite{Okounkov-vir}.
In particular, 
when the target is an elliptic curve, the formulas are particularly simple.
To be more explicit, 
 using the coordinates induced by \eqref{descedent-coordinates} and let 
$$(\ell)_n:=\ell\cdot(\ell+1)\cdots(\ell+n-1)$$
be the Pochhammer symbol with the convention $(\ell)_0:=1$, then
the Virasoro operators $\{L_k^\E\vert k\in \mathbb{Z}; k\geq -1\}$  are given by 
\begin{align*}
L_k^{\E}=-(k+1)!{\partial\over\partial \widetilde{t}_{k+1}^0}&+\sum_{\ell\geq0}\left((\ell)_{k+1}\widetilde{t}_{\ell}^0 {\partial\over \widetilde{t}_{k+\ell}^0}+(\ell+1)_{k+1}\widetilde{t}_\ell^3{\partial\over \partial \widetilde{t}_{k+\ell}^{3}}\right)
\\&+\sum_{\ell\geq0}\left((\ell+1)_{k+1}\widetilde{t}_{\ell}^{1} {\partial\over \widetilde{t}_{k+\ell}^{1}}+(\ell)_{k+1}\widetilde{t}_\ell^2{\partial\over \partial \widetilde{t}_{k+\ell}^{2}}\right).\nonumber
\end{align*}
According to \cite[Theorem 1]{Okounkov-vir}, the total descendent GW potential defined in \eqref{descedent-curve} is annihilated by these Virasoro operators 
$$L_k^{\E} \cD^\E(\tilde{\bf t})=0\,.$$

Recently in \cite{He:2020}, %Virasoro operators in FJRW theory are introduced in \cite{He:2020}, 
using Givental's quantization formula of quadratic Hamiltonians \cite{Giv01}, 
the second author and his collaborator study Virasoro operators in FJRW theory and conjecture that the total ancestor FJRW potential of any admissible LG pair $(W,G)$ is annihilated by the defining Virasoro operators. Besides various generically semisimple cases, they also verified the conjecture for the non-semisimple Fermat cubic pair $(W_3, \mu_3)$, using Theorem \ref{main-theorem}. 
More explicitly, using the coordinates induced by \eqref{fjrw-coordinates}, the Virasoro operators $\{L_k^{W_3, \mu_3}\vert k\in \mathbb{Z}; k\geq -1\}$ for the Fermat cubic pair $(W_3, \mu_3)$ are 
\begin{align*}
	L_k^{W_3, \mu_3}:=-(k+1)!{\partial\over \partial t_{k}^{0}}
	&+\sum_{\ell\geq 0}^\infty\left((\ell)_{k+1}s^0_\ell\frac{\partial}{\partial s^0_{k+\ell}}+(\ell+1)_{k+1}s^3_\ell\frac{\partial}{\partial s^3_{k+\ell}}\right)\label{cubic-virasoro}\\
	&+\sum_{\ell\geq 0}^\infty\left((\ell+1)_{k+1}s^1_\ell\frac{\partial}{\partial s^1_{k+\ell}}+(\ell)_{k+1}s^2_\ell\frac{\partial}{\partial s^2_{k+\ell}}\right).\nonumber
\end{align*}
It is not hard to see that these operators commute with the quantization operator $\widehat{S}_{t}^{-1}$ in the ancestor/descendent correspondence formula \eqref{ancestor-descendent} and the holomorphic Cayley transformation $\mathscr{C}_{ \tau_* }^{\rm hol}$ in Theorem \ref{main-theorem}. 
Therefore, Virasoro constraints for the FJRW theory is a consequence of Theorem \ref{main-theorem}.
\begin{cor}
\label{virasoro-lg}
\cite{He:2020}
The total ancestor FJRW potential of the pair $(W_3, \mu_3)$ is annihilated by the Virasoro operators $\{L_k^{W_3, \mu_3}\}$,
$$L_k^{W, \mu_3} \mathcal{A}^{W_3, \mu_3}({\bf s})=0\,.$$
\end{cor}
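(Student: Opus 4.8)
The plan is to deduce the Virasoro constraints on the FJRW side entirely by transport from the Gromov--Witten side, using the two structural identifications already in hand: the ancestor/descendent correspondence \eqref{ancestor-descendent} and the holomorphic Cayley transformation of Theorem \ref{main-theorem}. The input is the theorem of Okounkov and Pandharipande \cite{Okounkov-vir}, which asserts that the total descendent potential $\cD^{\E}$ of \eqref{descedent-curve} satisfies $L_k^{\E}\cD^{\E}=0$ for all $k\geq -1$.

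First I would transport these constraints from the descendent potential $\cD^{\E}$ to the GW ancestor potential $\cA^{\E}$. Substituting $\cD^{\E}=e^{\cF_1^{\circ\E}(t)}\widehat{S}_t^{-1}\cA^{\E}$ from \eqref{ancestor-descendent} into $L_k^{\E}\cD^{\E}=0$ and left-multiplying by $\widehat{S}_t\,e^{-\cF_1^{\circ\E}(t)}$, one finds that $\cA^{\E}$ is annihilated by $\widehat{S}_t\,e^{-\cF_1^{\circ\E}(t)}L_k^{\E}e^{\cF_1^{\circ\E}(t)}\widehat{S}_t^{-1}$. The crux is that this conjugation does not deform the operator: since $\widehat{S}_t$ is the quantization \eqref{quant-s-operator} of the simple unipotent symplectic transformation sending $\one\mapsto t\omega$, and the factor $\cF_1^{\circ\E}(t)=-\log(q)_\infty$ depends only on the degree variable $t=\tilde t_0^3$, the shift that $\widehat{S}_t$ produces is exactly the dilaton shift \eqref{dilaton-shift} already built into the Virasoro operators. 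Carrying out this bookkeeping term-by-term shows that the conjugated operator coincides, after the variable identification $\tilde t_\ell^i=t_\ell^i$, with an ancestor Virasoro operator $\widetilde L_k^{\E}$ carrying the same Pochhammer coefficients $(\ell)_{k+1}$ and $(\ell+1)_{k+1}$; thus $\widetilde L_k^{\E}\cA^{\E}=0$.

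Next I would apply the holomorphic Cayley transformation. By Theorem \ref{mainthm2}(iv), $\mathscr{C}_{\tau_*}^{\mathrm{hol}}$ sends the GW ancestor correlation functions to the FJRW ones under the state-space isomorphism $\Psi$ of \eqref{ring-iso}, so that $\mathscr{C}_{\tau_*}^{\mathrm{hol}}(\cA^{\E})=\cA^{W_3,\mu_3}(\mathbf{s})$. Applying $\mathscr{C}_{\tau_*}^{\mathrm{hol}}$ to $\widetilde L_k^{\E}\cA^{\E}=0$ gives $\mathscr{C}_{\tau_*}^{\mathrm{hol}}\widetilde L_k^{\E}(\mathscr{C}_{\tau_*}^{\mathrm{hol}})^{-1}\cA^{W_3,\mu_3}=0$. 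Because $\Psi$ matches the bases $\{\one,e_1,e_2,\omega\}$ and $\{\one,\mathfrak{b}_1,\mathfrak{b}_2,\phi\}$ preserving degree and the pairing, while $\mathscr{C}_{\tau_*}^{\mathrm{hol}}$ is a differential-ring homomorphism \cite{Shen:2016} acting as a weight-dependent rescaling on the generating variables, the conjugate $\mathscr{C}_{\tau_*}^{\mathrm{hol}}\widetilde L_k^{\E}(\mathscr{C}_{\tau_*}^{\mathrm{hol}})^{-1}$ is again of Virasoro form and equals the stated $L_k^{W_3,\mu_3}$. This is exactly the commutation of $L_k^{W_3,\mu_3}$ with $\widehat{S}_t^{-1}$ and with $\mathscr{C}_{\tau_*}^{\mathrm{hol}}$ referred to before the statement, and it yields $L_k^{W_3,\mu_3}\,\cA^{W_3,\mu_3}(\mathbf{s})=0$ for all $k\geq -1$.

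The main obstacle is precisely these two commutation verifications. The Cayley step is mild, since $\mathscr{C}_{\tau_*}^{\mathrm{hol}}$ respects the product and differential structure \cite{Shen:2016} and acts diagonally on the variables, so it commutes with the linear vector fields defining the operators. The genuinely technical point is the ancestor/descendent step: one must check, using the explicit form \eqref{quant-s-operator} of $\widehat{S}_t$ in the Darboux coordinates $\tilde q_k^0,\tilde p_k^0$, that the first-order term $-(k+1)!\,\partial/\partial \tilde t_{k+1}^0$ together with the Euler-type sums in $L_k^{\E}$ is carried to the corresponding FJRW operator without spurious terms, the cancellation being governed by the genus-one factor $e^{\cF_1^{\circ\E}}$. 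These verifications, for the non-semisimple pair $(W_3,\mu_3)$, are carried out in \cite{He:2020}.
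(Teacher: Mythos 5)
Your proposal follows essentially the same route as the paper: both start from the Okounkov--Pandharipande Virasoro constraints for the descendent GW potential of the elliptic curve, transport them to the ancestor potential via the quantization operator $\widehat{S}_t^{-1}$ (with the $e^{\cF_1^{\circ\E}}$ factor handled by noting it depends only on the degree variable), and then conjugate by the holomorphic Cayley transformation $\mathscr{C}_{\tau_*}^{\rm hol}$ using Theorem \ref{main-theorem}, with the two commutation checks deferred to \cite{He:2020}. This matches the paper's argument, which rests on exactly these commutation properties of $L_k^{W_3,\mu_3}$ with $\widehat{S}_t^{-1}$ and $\mathscr{C}_{\tau_*}^{\rm hol}$.
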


\appendix

\section{}

\subsection{A genus-one formula for Fermat cubic polynomial}
\label{appendixIfunction}

For the examples studied in this paper,
the connection between modular forms and periods of families of elliptic curve
give rise to nice formulae for the
 holomorphic Cayley transformation of quasi-modular forms in terms of hypergeometric series and Givental's $I$-functions.
In the following, we shall only consider the $d=3$ case
as an example, the other cases are similar.

Let us first recall some facts of quasi-modular forms following the exposition in
 \cite{Shen:2017}.
Let $\Gamma(3)$ be the level-$3$ principal congruence subgroup of $\Gamma={\rm SL}(2,\mathbb{Z})/\{\pm1\}$.
It is well known that the ring of quasi-modular forms (with a certain Dirichlet character) for
$\Gamma(3)$ is generated by
$$A=\theta_{A_{2}}(2\tau)$$ and
\begin{equation}\label{eqnEE2}
E= {3E_{2}(3\tau)+ E_{2}(\tau)\over 4}\,,
\end{equation}
where $\theta_{A_{2}}$
is the theta function for the $A_{2}$-lattice.
Define further the quantities (where $\eta$ is the Dedekind eta function)
\begin{equation}\label{eqnzA}
C=3 { \eta(3\tau)^3 \over \eta(\tau)}\,,
\quad
\alpha={C^3\over A^3}\,.
\end{equation}

These quantities satisfy
\begin{equation}
A=\,_{2}F_{1}( {1\over 3}, {2\over 3}; 1;\alpha)\,,
\end{equation}
and furthermore
\begin{equation}\label{eqnA2E2}
\begin{dcases}
A^2={1\over 2} (3E_{2}(3\tau)- E_{2}(\tau))={1\over 2\pi \sqrt{-1}}{1\over \alpha(1-\alpha)}{\partial\over \partial \tau} \alpha,\quad\\
E={6\over 2\pi \sqrt{-1} } {\partial \over \partial \tau}\log A-{2C^3-A^3\over A}\,.
\end{dcases}
%E&=&{6\over 2\pi \sqrt{-1} } {\partial \over \partial \tau}\log A-{2C^3-A^3\over A}\,.
\end{equation}
Using \eqref{eqnEE2}, \eqref{eqnzA} and \eqref{eqnA2E2}, we can rewrite the quasi-modular form
$E_{2}$ as
\begin{align}
E_{2}(\tau)&={12\over 2\pi \sqrt{-1} } {\partial \over \partial \tau}\log A
-(4\alpha-1)A^2\label{e2-formula}\\
&={1\over 2\pi \sqrt{-1} } {\partial \over \partial \tau}\left(12\log A
+ \log  (\alpha(1-\alpha)^3)\right).\nonumber %\in\widetilde{M}(\Gamma_{0}(p=3)).
\end{align}
In \cite{Shen:2016} the following was obtained from period calculation.
Taking $\tau_{*}=1/(1-\zeta_{3})$ 
as given in
\eqref{eqnellipticpoint} and $c$ as in  \eqref{eqnchoiceforc},
then one has
\begin{align*}
s(\tau)&=2\pi \sqrt{-1} c (\tau_{*}-\bar{\tau}_{*}) {\tau-\tau_{*}\over \tau-\bar{\tau}_{*}}\\
&=-2\pi \sqrt{-1} c (\tau_{*}-\bar{\tau}_{*})  {\Gamma(-{1\over 3})\Gamma({2\over 3})^2 \over
\Gamma({1\over 3})^3 } (-\alpha)^{-{1\over 3}} {\,_{2}F_{1}({2\over 3}, {2\over 3};{4\over 3};\alpha^{-1}) \over
\,_{2}F_{1}({1\over 3}, {1\over 3};{2\over 3};\alpha^{-1})}\,.
\end{align*}
Furthermore, one has
\begin{align*}
\mathscr{C}_{\tau_{*}}(A)&=(2\pi \sqrt{-1} c)^{-{1\over 2}}   {\Gamma({1\over 3})\over \Gamma({2\over 3})^2}
(-\alpha)^{-{1\over 3}}\,_{2}F_{1}\left({1\over 3}, {1\over 3};{2\over 3};\alpha^{-1}\right)\,,\\
\mathscr{C}_{\tau_{*}}(C)&=(2\pi \sqrt{-1} c)^{-{1\over 2}}   {\Gamma({1\over 3})\over \Gamma({2\over 3})^2}
(-1)^{-{1\over 3}}\,_{2}F_{1}\left({1\over 3}, {1\over 3};{2\over 3};\alpha^{-1}\right)\,.
\end{align*}
Combining the properties of the holomorphic Cayley transformation, Theorem \ref{mainthm2},
and \eqref{e2-formula}, we
immediately get %for the Fermat cubic polynomial
\begin{align*}
\LL \phi \RR_{1,1}^{W_3}&=\mathscr{C}_{ \tau_{*}}^{\mathrm{hol}}(\LL \omega \RR_{1,1}^{\E})\\
&=c^{-1}{\partial \over \partial s}\left(-{1\over 2}\log
\,_{2}F_{1}\left({1\over 3}, {1\over 3};{2\over 3};\alpha^{-1}\right)-{1\over 8}\log (1-\alpha^{-1})
%+c
\right)\,.
\end{align*}
%This formula coincides with Guo-Ross's computation \cite{Guo:2017} when applied to the cubic.

%newly added below

In the above GW generating series, the divisor class $\omega$
which corresponds to the first Chern class of a degree one line bundle on $\E$ is used as the insertion.
According to the divisor axiom, it follows that 
\begin{equation*}
\LL \, \RR_{1,0}^{\E}=-\log \eta(\tau)\,,
\end{equation*}
up to an additive constant.
Results derived for a plane cubic curve $\E_3$, such as those in Givental's formalism, use the pull-back of the hyperplane class
on the ambient space $\mathbb{P}^2$ as the insertion. The corresponding class $H$ is related to the one $\omega$ above by $H=3\omega$.
Hence we have up to an additive constant
\begin{equation*}
\LL \, \RR_{1,0}^{\E_3}=-\log \eta(3\tau)\,,
\end{equation*}
and 
thus
\begin{equation*}
\LL H \RR_{1,0}^{\E_3}=-{1\over 24}\cdot 3\cdot E_{2}(3\tau)\,.
\end{equation*}
Using \eqref{eqnEE2}, \eqref{eqnzA} and \eqref{eqnA2E2}, one can rewrite it
as
\begin{equation*}
\LL H \RR_{1,0}^{\E_3}=
{1\over 2\pi \sqrt{-1} } {\partial \over \partial \tau}\left( -{1\over 2}\log \,_{2}F_{1}({1\over 3}, {2\over 3};1;\alpha)
-{1\over 24} \log  (\alpha^3(1-\alpha))\right)\,.
\end{equation*}
This matches the results in \cite{Zinger:2009, Popa:2013} obtained using virtual localization.
Its holomorphic Cayley transformation is
\begin{equation*}
\mathscr{C}_{ \tau_{*}}^{\mathrm{hol}}(\LL H \RR_{1,0}^{\E_3})
=c^{-1}{\partial \over \partial s}\left(-{1\over 2}\log
\,_{2}F_{1}\left({1\over 3}, {1\over 3};{2\over 3};\alpha^{-1}\right)-{1\over 24}\log (1-\alpha^{-1})
%+c
\right)\,.
\end{equation*}
This agrees with the result derived using the wall-crossing method in Guo-Ross \cite{Guo:2016}.

\subsection{Cayley transformation and $I$-functions }
Now we discussion the connection between our formulation of LG/CY correspondence and the original formulation in \cite[Conjecture 3.2.1]{Chiodo:2010} using $I$-functions.

\subsubsection{$I$-functions and analytic continuation}
Following \cite[Section 4.2]{Chiodo:2010},  the cohomology-valued Givental $I$-function for the GW theory of the cubic hypersurface 
$$\{W_3=x_1^3+x_2^3+x_3^3=0\}\subset \mathbb{P}^2$$ 
is given by\footnote{Here the variable $\mathrm{q}$ should not be confused with the variable
$q=e^{2\pi i \tau}$ in modular forms.
}
\begin{equation}
\begin{aligned}
I_{\rm GW}(\mathrm{q}, z)&:=\sum_{d\geq 0} z\,\mathrm{q}^{H/ z+d}{\prod_{k=1}^{3d}(3H+kz)\over \prod_{k=1}^{d}(H+k z)^3}\\
&=I_0^{\rm GW}(\mathrm{q})\,z\, \one +I_1^{\rm GW}(\mathrm{q})H\,,
\end{aligned}
\end{equation}
where $H$ is the hyperplane class of $\mathbb{P}^{2}$.
While the $I$-function for the FJRW theory of the pair $(W_3, \mu_3)$ is given by
\begin{equation}
\begin{aligned}
I_{\rm FJRW}(t, z)&:= z\sum_{k=1}^{2}{1\over \Gamma(k)}\cdot \sum_{\ell\geq0}
{\left(({k\over 3})_{\ell}\right)^3t^{k+3\ell}\over (k)_{3\ell} z^{k-1}}\phi_{k-1}\\
&=I_0^{\rm FJRW}(t)\, z\, \one +I_1^{\rm FJRW}(t)\phi\,,
\end{aligned}
\end{equation}
where $\phi_0=\one$ and $\phi_{1}=\phi$ are nontrivial degree-zero and two elements in the state space.
The genus-zero LG/CY correspondence \cite{Chiodo:2010} relates these two $I$-functions by analytic continuation via
 $\mathrm{q}=t^{-3}$.
To be more explicit, one has the following analytic continuation
 \begin{align}
\begin{pmatrix}
I_1^{\rm FJRW}(t)/3\\
I_0^{\rm FJRW}(t)/3
\end{pmatrix}
=\begin{pmatrix}
{(-1)\over \Gamma^3({2\over 3})}{2\pi\sqrt{-1}\zeta_3\over 1-\zeta_3} & -{(-1)\over \Gamma^3({2\over 3})}{(2\pi\sqrt{-1})^2\zeta_3\over (1-\zeta_3)^2}\\
-{(-1)^2\over \Gamma^3({1\over 3})}{2\pi\sqrt{-1}\zeta_3^2\over 1-\zeta_3^2} &{(-1)^2\over \Gamma^3({1\over 3})}{(2\pi\sqrt{-1})^2\zeta_3^2\over (1-\zeta_3^2)^2} 
\end{pmatrix}
\begin{pmatrix}
I_1^{\rm GW}(t({\mathrm{q}}))\\
I_0^{\rm GW}(t({\mathrm{q}}))
\end{pmatrix}\label{i-function-matrix}\,,
\end{align}
where the normalization factor $1/3$ on the basis $\{I_{0}^{\rm FJRW}, I_{1}^{\rm FJRW}\}$
is introduced such that the connection matrix lies in $\mathrm{SL}_{2}(\mathbb{C})$.
In particular, define
\begin{equation}\label{eqnmirrormap}
t_{\rm GW}:={I_1^{\rm GW}(\mathrm{q})\over I_0^{\rm GW}(\mathrm{q})}\,,
\quad
t_{\rm FJRW}:={I_1^{\rm FJRW}(t)\over I_0^{\rm FJRW}(t)}
\,.
\end{equation}
Then one has
\begin{equation}\label{eqnsymplectictransformation}
t_{\rm FJRW}
=-e^{\pi i\over 3}\cdot {\Gamma({1\over 3})^3\over \Gamma(-{1\over 3}) \Gamma({2\over 3})^{2}}\cdot { t_{\rm GW} -2\pi i \tau_{*}\over t_{\rm GW} -2\pi i \bar{\tau}_{*} }\,.
\end{equation}

 \subsubsection{Cayley transformation}
Following the computations in \cite{Shen:2016} as in Appendix \ref{appendixIfunction}, we can relate the above $I$-functions to modular forms.
\iffalse

% $\alpha=27q, I_0^{\rm GW}=\pi_{1}, I_0^{\rm GW}=2\pi i \kappa \pi_{2}$. 

Thus we see when $|{t^3\over 3^3}|<1$,
\begin{equation}
\begin{dcases}
I_1^{\rm FJRW}(t)=\,_{2}F_{1}\left({2\over 3}, {2\over 3};{4\over 3}; {t^3\over 3^3}\right)\cdot t^2;\\
I_0^{\rm FJRW}(t)=\,_{2}F_{1}\left({1\over 3}, {1\over 3};{2\over 3}; {t^3\over 3^3}\right)\cdot t.
\end{dcases}
\end{equation}

\begin{equation}
t\,_{2}F_{1}\left({1\over 3}, {1\over 3};{2\over 3}; {t^3\over 3^3}\right) 
={\Gamma({1\over 3})\Gamma({2\over 3}) \over 2\pi\cdot 3^{{1\over 2}-1}}I_0^{\rm FJRW}(t)\,,\quad
t^2 
\,_{2}F_{1}\left({2\over 3}, {2\over 3};{4\over 3}; {t^3\over 3^3}\right) ={\Gamma({2\over 3})\Gamma({4\over 3}) \over 2\pi\cdot 3^{{1\over 2}-2}}I_1^{\rm FJRW}(t)\,.
\end{equation}

\fi
In particular, we see that
\begin{equation}\label{eqnmirrormap}
t_{\rm GW}:={I_1^{\rm GW}(\mathrm{q})\over I_0^{\rm GW}(\mathrm{q})}=2\pi i \tau\,,
\quad
t_{\rm FJRW}:={I_1^{\rm FJRW}(t)\over I_0^{\rm FJRW}(t)}
=e^{2\pi i\over 3}\cdot -{\sqrt{3}\over i}\cdot {\Gamma({1\over 3})^2\over \Gamma(-{1\over 3})}s\,.
\end{equation}
Here $s$ is the coordinate given in \eqref{eqnCayleytransform}, with again
 $\tau_{*}=1/(1-\zeta_{3})$ 
as given in
\eqref{eqnellipticpoint} and $c$ as in  \eqref{eqnchoiceforc}.
Analytical continuations on
the $I$-functions,
 induced by 
\eqref{eqnsymplectictransformation}, coincide with Cayley transformations on them induced by \eqref{eqnCayleytransform} by construction  \cite{Shen:2016}.

Through the connection to modular forms,
LG/CY correspondence on $I$-functions can be restated as follows.
Let $\mathcal{M}=\Gamma(3)\backslash \mathbb{H}^{*}$ 
be the modular curve as the global moduli space, where $\mathbb{H}^{*}=\mathbb{H}\cup \mathbb{P}^{1}(\mathbb{Q})$. Denote its canonical bundle by $K_{\mathcal{M}}$.
Then $I^{\rm GW}$
and $I^{\rm FJRW}$
correspond to descriptions of the same holomorphic section
of the line bundle that is isomorphic to $K_{\mathcal{M}}^{\otimes {1\over 2}}$, but on different patches of the moduli space.
Their coordinate expressions $I_0^{\rm GW}, I_{0}^{\rm FJRW}$, with respect to the trivializations $(d\tau)^{1\over 2}, (ds)^{1\over 2}$ respectively,
are modular forms related by Cayley transformation.

\subsubsection{Stationary correlation functions}

At higher genus, consider the stationary correlation function
$$\LL\alpha_1\psi_1^{\ell_1},\cdots,\alpha_n\psi_n^{\ell_n}\RR_{g,n}^{\clubsuit}\,,$$
with $\alpha_i=\omega$ when $\clubsuit=\E_3$ and $\alpha_i=\phi$ when $\clubsuit=W_3$. By applying the $g$-reduction technique in Lemma \ref{g-reduction} inductively, we see that
under the map  \eqref{eqnmirrormap}
this correlation function on the GW side is the Fourier expansion of a quasi-modular form of weight $2g-2+2n$ near the cusp, and on the FJRW side is the Taylor expansion (in terms of the parameter $s$) of the same quasi-modular form
near the point $\tau_{*}$. 

According to standard facts in the theory of modular forms (see e.g., \cite{Urban:2014nearly, Zagier:2008}) on the transition between quasi-modular forms and almost-holomorphic modular forms, we see that on the level of GW correlation functions the modular completion is induced by the transformation mapping the frame of $H^{\mathrm{even}}(\E_3, \mathbb{C})$ from
$\{ \one+ 2\pi i \tau H, 2\pi i H\}$
to
$\{ \one+ 2\pi i\tau H,  {1\over \bar{\tau}-\tau}(\one-2\pi i \bar{\tau} H)\}$.
This transformation also induces  the modular completion on the FJRW correlation functions by compositing with the aforementioned transformation that relates $I^{\rm GW}$
with $I^{\rm FJRW}$.

One succinct way to reformulate our higher-genus LG/CY correspondence result 
on  $\LL\alpha_1\psi_1^{\ell_1},\cdots,\alpha_n\psi_n^{\ell_n}\RR_{g,n}^{\clubsuit}$
is then as follows.
Denote its modular completion by
$$\LL\alpha_1\psi_1^{\ell_1},\cdots,\alpha_n\psi_n^{\ell_n}\RR_{g,n}^{\clubsuit, \,\widehat{~}}\,.$$
Let $I_0^{\clubsuit}=I_{0}^{\rm GW}, d\tau^{\clubsuit}=d\tau$ for $\clubsuit=\E_3$, and
$I_0^{\clubsuit}=I_{0}^{\rm FJRW}, d\tau^{\clubsuit}=ds$ for $\clubsuit=W_3$.
Then
the quantity
$$(I_0^{\clubsuit})^{2-2g}\LL\alpha_1\psi_1^{\ell_1},\cdots,\alpha_n\psi_n^{\ell_n}\RR_{g,n}^{\clubsuit,\,\widehat{~}}(d\tau^{\clubsuit})^{\otimes n}\,$$
is a global (smooth with holomorphic pole) section 
of the holomorphic line bundle $K_{\mathcal{M}}^{\otimes n}$
on the modular curve $\mathcal{M}=\Gamma(3)\backslash \mathbb{H}^{*}$.

\iffalse
%\textcolor{red}{No need to recall it.}
\begin{cor}
\label{mainthm2recalled}
Consider the LG space $([\mathbb{C}^3/\langle J_\delta\rangle], W)$ given by \eqref{weight-one} and \eqref{fermat-elliptic}.
 \begin{enumerate}
 \item [(a).]
The genus-one GW correlation function $-24\LL\omega\RR_{1,1}^{\E}(q)$ coincides with the Fourier expansion of $E_{2}$
around the cusp $q=0$.
The genus-one FJRW correlation function $-24\LL\phi\RR^{(W,G)}_{1,1}(s)$ coincides with
the Taylor expansion of $E_{2}$ around some elliptic point $\tau_{*}\in \mathbb{H}$ depending on $(d,\delta)$.
 \item [(b).]
The GW correlation functions
$\LL\cdots\RR_{g,n}^{\E}$ are quasi-modular forms lying in the
ring $\mathbb{C}[E_{2},E_{4},E_{6}]$. Similarly, the FJRW correlation functions $\LL\cdots\RR_{g,n}^{(W,G)}$
are holomorphic Cayley transformations of quasi-modular forms lying in
$\mathbb{C}[\mathscr{C}_{  \tau_{*}}(E_{2}),\mathscr{C}_{  \tau_{*}}(E_{4}),\mathscr{C}_{ \tau_{*}}(E_{6})]$.
%\subset^?\widetilde{M}(\Gamma)$.
%$$\LL\RR_{g,n}^{\E_d}, \LL\RR_{g,n}^{W_d}\in \mathbb{C}[f_d, f'_d, f''_d]\subset^?\widetilde{M}(\Gamma)$$
\end{enumerate}
\end{cor}
\fi

%\end{appendices}

%\providecommand{\bysame}{\leavevmode\hbox to3em{\hrulefill}\thinspace}

%\bibliographystyle{amsalpha}

\bigskip{}

\noindent{\small Shanghai Center for Mathematical Sciences, Fudan University, Shanghai, China}

%\noindent{\small Department of Mathematics, Stanford University, Stanford, CA 94305, USA}

\noindent{\small E-mail: \tt lijun2210@fudan.edu.cn}

\medskip{}

\noindent{\small Department of Mathematics, University of Oregon, Eugene, Oregon, USA}

\noindent{\small E-mail: \tt yfshen@uoregon.edu}

\medskip{}

\noindent{\small Yau Mathematical Sciences Center, Tsinghua University, Beijing 100084, China}

\noindent{\small E-mail: \tt jzhou2018@mail.tsinghua.edu.cn}

\end{document}